\documentclass[english,12pt]{amsart}
\title{Transience of Edge-Reinforced Random Walk}

\textwidth=16.3cm \textheight=23.2cm \setlength{\topmargin}{-.5cm}
\setlength{\evensidemargin}{-.cm} \setlength{\oddsidemargin}{-.cm}
\raggedbottom
\parskip=1.5mm

\usepackage{amsmath,amsfonts,bbm,amsthm}
\usepackage[latin1]{inputenc}  
\usepackage[T1]{fontenc}       
\usepackage[psamsfonts]{amssymb}


\usepackage{latexsym}
\usepackage[dvips]{epsfig}
\usepackage{dcolumn}
\usepackage{tabularx}
\usepackage{longtable}
\usepackage{graphics}
\usepackage{pstricks}
\usepackage{pst-node}
\usepackage{latexsym}
\usepackage{babel}
\usepackage{euscript}


\renewcommand{\theequation}{\thesection.\arabic{equation}}
\newtheorem{theorem}{Theorem}
\newtheorem{lemma}{Lemma}
\newtheorem{proposition}{Proposition}

\newtheorem{remark}{Remark}
\newtheorem{definition}{Definition}

\newcommand{\eqnsection}{
\renewcommand{\theequation}{\thesection.\arabic{equation}}
    \makeatletter
    \csname  @addtoreset\endcsname{equation}{section}
    \makeatother}
\eqnsection












\def\rrr{{\cal R}}

\def\demi{\frac{1}{2}}

\def\Z{{{\mathbb{Z}}}}
\def\P{{{\mathbb{P}}}}
\def\N{{\mathbb{N}}}
\def\R{{\mathbb{R}}}

\def\fff{{{\mathcal F}}}

\def\ttt{{\mathcal T}}

\def\E{{\mathbb{E}}}
\def\dive{{\hbox{div}}}

\def\hhh{{{\mathcal H}}}

\def\indic{{{\mathbbm 1}}}


\newtheorem{corol}{Corollary}[section]
\newtheorem{rema}{Remark}[section]
\newtheorem{exa}{Example}

\renewcommand{\le}{\leqslant}
\renewcommand{\leq}{\leqslant}
\renewcommand{\ge}{\geqslant}
\renewcommand{\geq}{\geqslant}
\renewcommand{\subset}{\subseteq}

\newcommand{\bal}{\begin{align*}}
\newcommand{\eal}{\end{align*}}
\newcommand{\beq}{\begin{eqnarray*}}
\newcommand{\eeq}{\end{eqnarray*}}
\newcommand{\bte}{\begin{theorem}}
\newcommand{\ete}{\end{theorem}}
\newcommand{\bl}{\begin{lemma}}
\newcommand{\el}{\end{lemma}}
\newcommand{\bd}{\begin{description}}
\newcommand{\ed}{\end{description}}

\newcommand{\bc}{\begin{cases}}
\newcommand{\ec}{\end{cases}}
\newcommand{\bp}{\begin{proof}}
\newcommand{\ep}{\end{proof}}
\newcommand{\bco}{\begin{corol}}
\newcommand{\eco}{\end{corol}}

\newcommand{\ochi}{\overline{\chi}}
\newcommand{\tR}{\tilde{R}}

\newcommand{\1}{\hbox{1 \hskip -7pt I}}

\newcommand{\iy}{\infty}
\newcommand{\tx}{\text}

\newcommand{\cqfd}{\ensuremath{\Box}}

\newcommand{\Cst}{\mathsf{Cst}}

\newcommand{\F}{\mathcal{F}}

\newcommand{\Es}{\mathbb{E}}
\newcommand{\Pb}{\mathbb{P}}

\newcommand{\T}{\mathcal{T}}
\newcommand{\Rc}{\mathcal{R}}

\newcommand{\lan}{\langle}
\newcommand{\ran}{\rangle}

\newcommand{\al}{\alpha}

\newcommand{\be}{\beta}

\newcommand{\g}{\gamma}

\newcommand{\de}{\delta}

\newcommand{\ze}{\zeta}

\newcommand{\La}{\Lambda}

\newcommand{\s}{\sigma}

\newcommand{\Om}{\Omega}

\def\bdes{\begin{description}}
\def\edes{\end{description}}

\def\rrr{{\mathcal R}}

\def\opsi{{\overline\psi}}
\def\ggg{{\mathcal G}}

\begin{document}
\author[M. Disertori]{Margherita DISERTORI}
\address{Institute for Applied Mathematics \&
Hausdorff Center for Mathematics, University of Bonn, 
Endenicher Allee 60, 53115  Bonn, Germany}
\email{margherita.disertori@iam.uni-bonn.de} 
\author[C. Sabot]{Christophe SABOT}
\address{Universit\'e de Lyon, Universit\'e Lyon 1,
Institut Camille Jordan, CNRS UMR 5208, 43, Boulevard du 11 novembre 1918,
69622 Villeurbanne Cedex, France} \email{sabot@math.univ-lyon1.fr}
\author[P. Tarr\`es]{Pierre Tarr\`es}
\address{Ceremade, CNRS UMR 7534 and Universit\'e Paris-Dauphine, Place de Lattre de Tassigny, 75775 Paris Cedex 16, France.} \email{tarres@ceremade.dauphine.fr}
\keywords{} \subjclass[2000]{primary 60K37, 60K35,
secondary 81T25, 81T60}
\thanks{This work was partly supported by the LABEX project MILYON, ANR project MEMEMO 2 and  \\
ERC Starting Grant CoMBoS}
\begin{abstract}
We show transience of the edge-reinforced random walk (ERRW) for small reinforcement in dimension $d\ge3$. 
This proves the existence of a phase transition between recurrent and transient behavior, thus solving
an open problem stated by Diaconis in 1986.
The argument adapts the proof of quasi-diffusive behavior of the SuSy hyperbolic model for fixed 
conductances by Disertori, Spencer and Zirnbauer \cite{dsz}, using the representation of ERRW as a mixture of vertex-reinforced jump processes (VRJP) with independent gamma conductances, 
and the interpretation of the limit law of VRJP as a supersymmetric (SuSy) hyperbolic sigma model developed by Sabot and Tarr\`es in \cite{sabot-tarres}. 
\end{abstract}
\maketitle

\section{Introduction}
\subsection{Setting and main result.}
Let $(\Om,\F,\Pb)$ be a probability space. Let  $G=(V,E,\sim)$ be a nonoriented connected locally 
finite graph without loops. Let $(a_{e})_{e\in E}$ and $(w_{e})_{e\in E}$ two 
sequences of positive weights associated to each $e\in E$.

Let $(X_n)_{n\in\N}$ be a random process that takes values in $V$, and let $\F_n=\sigma(X_0,\ldots,X_n)$ 
be the filtration of its past. For any $e\in E$, $n\in\N\cup\{\iy\}$, let 
\begin{equation}
Z_n(e)=a_e+ \sum_{k=1}^n\1_{\{\{X_{k-1},X_k\}=e\}}
\end{equation}
be the number of crosses of $e$ up to time $n$ plus the initial weight $a_e$.
Then $(X_n)_{n\in\N}$ is called Edge Reinforced Random Walk (ERRW) with starting point $i_0\in V$ 
and  weights $(a_e)_{e\in E}$,  
if  $X_0=i_0$ and,  for all $n\in\N$, 
\begin{equation}
\label{def-vsirw}
\Pb(X_{n+1}=j~|~\F_n)=\indic_{\{j\sim X_n\}}\frac{Z_n(\{X_n,j\})}
{\sum_{k\sim X_n} Z_n(\{X_n,k\})}.
\end{equation}

On the other hand, let $(Y_t)_{t\ge0}$ be a continuous-time process on $V$, starting at time $0$ at 
some vertex $i_0\in V$. Then  $(Y_t)_{t\ge0}$ is called a 
Vertex-Reinforced Jump Process (VRJP) with starting point $i_{0}$ and weights $(w_e)_{e\in E}$ if $Y_{0}=i_{0}$
and if $Y_{t}=i$ then, conditionally on $(Y_s, s\le t)$, the process jumps to a neighbour $j$ of $i$ at 
rate $w_{\{i,j\}}L_j(t)$, where
$$L_j(t):=1+\int_0^t \1_{\{Y_s=j\}}\,ds.$$

The Edge Reinforced Random Walk was introduced in 1986 by Diaconis \cite{coppersmith} 
and is known to be a mixture of reversible
Markov chains with explicitly known mixing measure \cite{coppersmith,keane-rolles1}. 

On acyclic graphs the model displays a recurrence/transience phase transition, as was first observed by Pemantle \cite{collevecchio1,pemantle4}. On finite strips $\Z\times G$, $G$ finite, Merkl and Rolles \cite{merkl-rolles4} showed that the model is recurrent for any choice of the parameters $a>0$. Recurrence is conjectured to hold also on $\Z^2$ for any reinforcement. A first result in this sense was obtained by
 Merkl and Rolles \cite{merkl-rolles2} in 2009, who showed recurrence on a diluted version of $\Z^2$ (the amount of dilution needed increases with the parameter $a>0$) using a Mermin-Wagner type argument.
More recently almost-sure positive  recurrence for large reinforcement was proved by 
Sabot and Tarr\`es in \cite{sabot-tarres} {\it on any graph of bounded degree }
(i.e. if  $a_e<\tilde{a}_c$ for all $e\in E$,
 for some  $\tilde{a }_c$ depending on that degree), using  localisation results of Disertori and
Spencer  \cite{ds}; an alternative proof was { given}
 by Angel, Crawford and Kozma \cite{ack}.

The Vertex-Reinforced Jump
 Process was proposed by Werner in 2000, and initially studied by Davis and Volkov \cite{dv1,dv2}; 
for more details 
on {this model} and related questions, see \cite{sabot-tarres} for instance. 
{As for ERRW,  almost-sure positive recurrence for large reinforcement 
(i.e. if   $w_e<\tilde{w}_c$ for all $e\in E$,
 for some  $\tilde{w}_{c}>0$) on any graph of bounded degree, was proved in   \cite{sabot-tarres} and  \cite{ack}.
Moreover,  transience  for small reinforcement (i.e. if  $w_e>w_c$ for some $w_c>0$) on $\Z^d$,  $d\geq 3$, was proved by 
Sabot and Tarr\`es in \cite{sabot-tarres} using  delocalisation results of Disertori,
Spencer and Zirnbauer \cite{dsz}.}

The aim of this paper is to prove transience of the edge-reinforced random walk (ERRW) for large $a_e>0$, i.e. 
small reinforcement, in any dimension larger than two.
This proves the existence of a phase transition between recurrent and transient behavior, thus solving
an open problem stated by Diaconis in 1986.

\begin{theorem}
\label{thm:transience}
On $\Z^d$, $d\ge3$, there exists $a_c(d)>0$ such that, if $a_e>a_c(d)$ for all $e\in E$, 
then the ERRW with weights $(a_e)_{e\in E}$ is transient a.s.
\end{theorem}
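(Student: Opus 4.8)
The idea is to combine the probabilistic representations of the ERRW with an adaptation of the quasi-diffusivity estimates of Disertori, Spencer and Zirnbauer \cite{dsz}.

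\emph{Step 1: from the process to a random electrical network.} By the representation of Sabot and Tarr\`es, the ERRW with weights $(a_e)$ has, up to a time change, the law of a VRJP on the same graph with \emph{independent random} weights $(W_e)$, where $W_e$ has a $\mathrm{Gamma}(a_e,1)$ distribution; and such a VRJP started at $i_0$ is itself a mixture of reversible Markov jump processes, being --- conditionally on a random field $(u_i)_{i\in V}$ with $u_{i_0}=0$ --- the Markov process with conductances $c_{ij}=W_{ij}e^{u_i+u_j}$, where $(u_i)$ is sampled, given $(W_e)$, from the marginal of the supersymmetric hyperbolic sigma model with edge couplings $(W_e)$. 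Integrating $(W_e)$ out against $\bigotimes_e\mathrm{Gamma}(a_e,1)(\mathrm dW_e)$ exhibits the ERRW as a mixture of reversible Markov chains on $\Z^d$ with conductances $(c_{ij})$ drawn from an explicit law $\mu_{(a_e)}$; moreover, performing the Gamma integrals bond by bond recasts $\mu_{(a_e)}$ as (a marginal of) a supersymmetric measure of $\mathbb{H}^{2|2}$ type, but with power-law bond weights $(\,\cdot\,)^{-a_e}$ in place of $e^{-W_e(\,\cdot\,)}$. Since a mixture of reversible Markov chains is transient if and only if $\mu_{(a_e)}$-almost every component network has finite effective resistance from $i_0$ to infinity, it suffices to prove the latter.

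\emph{Step 2: the resistance estimate.} In a finite box $\Lambda_L\subset\Z^d$, let $\theta$ be the unit flow from $i_0$ to $\partial\Lambda_L$ obtained by restricting a finite-energy transient flow on $\Z^d$; as $d\ge3$ one has $\sum_e\theta_e^2\le C_d<\infty$ uniformly in $L$. Thomson's principle gives, for each realisation, $R_{\mathrm{eff}}(i_0\leftrightarrow\partial\Lambda_L)\le\sum_e\theta_e^2\,c_e^{-1}$, so
\[
\E_{\mu}\bigl[R_{\mathrm{eff}}(i_0\leftrightarrow\partial\Lambda_L)\bigr]\ \le\ C_d\,\sup_{e=\{x,y\}}\E_{\mu}\bigl[W_{xy}^{-1}\,e^{-u_x-u_y}\bigr].
\]
Using $u_{i_0}=0$, H\"older's inequality, and the finiteness of the negative moments $\E[W_e^{-p}]$ of a $\mathrm{Gamma}(a_e,1)$ variable for $a_e>p$, the right-hand side is bounded by a uniform estimate of the form $\sup_x\E_{\mu}[e^{-q\,u_x}]\le C$ for some fixed $q>1$. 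Granting this, and passing to the infinite-volume limit (available thanks to the same uniform estimates), monotone convergence for the effective resistance gives $\E_{\mu}[R_{\mathrm{eff}}(i_0\leftrightarrow\infty)]<\infty$; hence the random network, and therefore the ERRW, is transient a.s.

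\emph{Step 3: the delocalisation estimate --- the main obstacle.} It remains to establish the uniform bound $\sup_{x,L}\E_{\mu}[e^{-q\,u_x}]\le C$ on the horospherical field of $\mu_{(a_e)}$, for a fixed $q>1$, when all $a_e$ are large; this is exactly where the argument of \cite{dsz} must be adapted. For \emph{fixed} large couplings $W_e\equiv\beta$, DSZ establish uniform bounds on $\langle e^{\pm s(u_x-u_y)}\rangle$ via the supersymmetric Ward identities of the $\mathbb{H}^{2|2}$ model together with an inductive conditioning argument over dyadic scales. I would re-run that scheme for $\mu_{(a_e)}$. The two delicate points are: (i) that the supersymmetry, and with it the Ward identities on which the DSZ a priori bounds rest, survives the extra integration over the couplings --- which it should, because the Gamma weights are, up to normalisation, precisely the radial integrals naturally attached to the sigma-model bonds, so that $\mu_{(a_e)}$ is again a SUSY marginal (of the $(\,\cdot\,)^{-a_e}$-bond type of Step 1); and (ii) control of the configurations where some $W_e$ is atypically small, which a priori could ruin the estimates --- here one uses that a $\mathrm{Gamma}(a_e,1)$ law assigns only $O(\epsilon^{a_e})$ mass to $\{W_e<\epsilon\}$ for $a_e$ large, together with the fact that the network can be rerouted around finitely many weak bonds, so that the DSZ induction closes once $a_c(d)$ is taken large enough (depending on $q$ and $d$). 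Carrying out (i) and (ii) is the technical heart of the proof; combined with Steps 1--2, it yields Theorem~\ref{thm:transience}.
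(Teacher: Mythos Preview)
Your outline captures the main architecture and, crucially, anticipates the key structural fact (your point~(i) in Step~3): integrating out the independent Gamma conductances $W_e$ against the VRJP mixing measure produces an explicit density on $(u,s)$ with power-law bond weights $\prod_e B_e^{-a_e}$ (Proposition~\ref{newm} in the paper), and this density retains enough supersymmetry for Ward identities of DSZ type to hold (Lemma~\ref{ward}). This is exactly what the paper does; the entire DSZ inductive scheme is then re-run on this annealed measure to produce Theorem~\ref{thm:fluctuations}.

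Two places in your sketch would not work as written. \emph{Step~2.} You bound $\E_\mu[R_{\mathrm{eff}}(0\leftrightarrow\partial\Lambda_L)]$ and then invoke an infinite-volume limit of $\mu$. The mixing measure is defined only in finite volume and no such limit is established here. The paper avoids this: since $\{H_{\partial V_n}<\tilde H_0\}$ depends only on the walk inside $V_n$, one has $\Pb_0(H_{\partial V_n}<\tilde H_0)=\E_0^{V_n}[P_0^{W^U}(H_{\partial V_n}<\tilde H_0)]$, and Jensen gives $1/\Pb_0(H_{\partial V_n}<\tilde H_0)\le\E_0^{V_n}[W_0^U\,R(0,\partial V_n,W^U)]$. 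The factor $W_0^U=\sum_{l\sim 0}W_{0l}e^{U_0+U_l}$, which you omit, is what turns the quantity to estimate into $\E\bigl[(W_{0l}/W_{ij})\,e^{U_0-U_i}e^{U_l-U_j}\bigr]$, i.e.\ into moments of \emph{differences} $U_x-U_y$ --- exactly what Theorem~\ref{thm:fluctuations} controls --- and lets the whole argument live in finite volume.

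\emph{Step~3, point~(ii).} Your plan --- bound the Gamma-probability of $\{W_e<\epsilon\}$ and reroute around weak bonds --- amounts to running DSZ \emph{quenched} in $W$ and averaging afterwards. This is not what the paper does and is unlikely to close: the DSZ induction needs all couplings uniformly large, yet any growing box a.s.\ contains arbitrarily small $W_e$'s, so the quenched estimate degrades. The paper works \emph{annealed throughout}: once $W$ is integrated out (your~(i)), there are no random $W_e$ left. The Ward identity becomes $\langle B_{xy}^m(1-mD_{xy})\rangle=1$, where $D_{xy}$ is the effective resistance for the conductances $c_{ij}^{xy}=a_{ij}e^{u_i+u_j-u_x-u_y}B_{xy}/B_{ij}$; what must be controlled are fluctuations of the $u$-field through $B_{ij}$, not of $W$, and that is precisely what the DSZ machinery (diamonds, protected Ward estimates, good-points expansion) is adapted to handle. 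In short, your~(i) is the right idea and makes~(ii) unnecessary.
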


The proof of Theorem \ref{thm:transience} follows from estimates on the fluctuation of a field $(U_i)$ 
associated to the limiting behaviour of the reinforced-random walk.  { The key tools are
an inductive argument on scales and a family of Ward identites generated by some internal symmetries 
of the probability measure.}
 Let us first recall two earlier 
results from Sabot and Tarr\`es \cite{sabot-tarres}. 

\begin{theorem}[Sabot and Tarr\`es \cite{sabot-tarres}]
\label{annealed}
On any locally finite graph $G$, the ERRW $(X_n)_{n\ge 0}$ is equal in  law to the discrete time process 
associated with a 
VRJP in random independent conductances $W_e\sim \it{Gamma}(a_e,1)$.
\end{theorem}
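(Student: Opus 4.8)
The plan is to establish that, on every finite subgraph, both the ERRW $(X_n)$ and the discrete-time skeleton of the VRJP in Gamma conductances are mixtures of reversible Markov chains, and then to check that the two mixing measures coincide; the statement on the infinite locally finite graph then follows by a Kolmogorov consistency argument, since the law of a finite trajectory involves only finitely many edges. On a finite graph $G$ with weights $(a_e)$, the first ingredient is the classical Coppersmith--Diaconis ``magic formula'' \cite{coppersmith,keane-rolles1}: the ERRW is the annealed law of a random walk in random conductances $(x_e)_{e\in E}$, where $(x_e)$ is drawn (on a fixed normalization section, say $x_{e_0}=1$) from the explicit density proportional to $\bigl(\prod_{e\in E} x_e^{a_e-1}\bigr)\,\Phi_G(x)$, the factor $\Phi_G$ being built from the weighted spanning-tree polynomial (equivalently a minor of the weighted graph Laplacian) of $(G,x)$ together with a Gaussian-type term in the vertex sums $\sum_{e\ni i}x_e$.

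The second ingredient is the representation of the VRJP with \emph{fixed} weights $(W_e)$ as a mixture of Markov jump processes \cite{sabot-tarres} (building on \cite{dv1,dv2}): after the time change $D(s)=\int_0^s L_{Y_u}(u)\,du$, the process $Z_t=Y_{D^{-1}(t)}$ is, conditionally on a random field $(U_i)_{i\in V}$ with $U_{i_0}=0$, a Markov jump process with jump rate $\tfrac12 W_{\{i,j\}}e^{U_j-U_i}$ from $i$ to $j$, and the law $\nu^W(dU)$ of the field has an explicit density whose interaction part is $\exp\bigl(-\sum_{\{i,j\}}W_{\{i,j\}}(\cosh(U_i-U_j)-1)\bigr)$, multiplied by a matrix-tree determinant factor and a linear exponential term. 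Because $D$ is continuous and strictly increasing, $Z$ visits the same vertices in the same order as $Y$, so the discrete-time process associated with the VRJP is the embedded chain of $Z$; and the embedded chain of that Markov jump process is exactly the reversible random walk in the conductances $c_{\{i,j\}}=W_{\{i,j\}}e^{U_i+U_j}$. Hence the VRJP in fixed conductances is a mixture of reversible chains whose mixing measure is the image of $\nu^W$ under $U\mapsto (W_{\{i,j\}}e^{U_i+U_j})_{\{i,j\}}$.

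Now take $W_e\sim\mathrm{Gamma}(a_e,1)$ independent and integrate out the two environment layers. Writing the joint law of $(W,U)$ as $\prod_e\tfrac{1}{\Gamma(a_e)}W_e^{a_e-1}e^{-W_e}\,\nu^W(dU)$ and performing, for fixed $U$, the change of variables $W_{\{i,j\}}\mapsto c_{\{i,j\}}=W_{\{i,j\}}e^{U_i+U_j}$, the prior factor $e^{-W_e}$ combines with the term $e^{-W_{\{i,j\}}(\cosh(U_i-U_j)-1)}$ from $\nu^W$ into $e^{-W_{\{i,j\}}\cosh(U_i-U_j)}$, while the Jacobian together with $W_e^{a_e-1}$ produces $c_e^{a_e-1}$ times a purely $U$-dependent factor $e^{-a_e(U_i+U_j)}$. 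One is left with a density in $(c,U)$ whose $c$-part is $\prod_e c_e^{a_e-1}$, and integrating out the field $U$ yields a density of the form $\prod_e c_e^{a_e-1}$ times a function of the conductances alone. The assertion of the theorem is that this function equals the Coppersmith--Diaconis factor $\Phi_G$.

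The main obstacle is precisely this last identification: one has to show that the Gaussian-times-matrix-tree integral over the field $U$ (with the $\cosh$ interaction now evaluated in the new variables) reduces to the weighted spanning-tree polynomial / Laplacian-minor expression of Coppersmith--Diaconis, and then track all normalizing constants. This is essentially a determinant computation relating the Hessian of the $U$-action to the weighted Laplacian of $(G,c)$, together with an application of the matrix-tree theorem. The remaining points are technical: justifying the time-change bookkeeping and ruling out (or absorbing) any explosion of the VRJP by working along a finite exhaustion of $\mathbb{Z}^d$, and verifying the compatibility of the finite-graph mixing measures under restriction so that the passage to the infinite graph is legitimate. (An alternative, more hands-on route would bypass the mixing measures and compare directly the step-by-step transition probabilities of the ERRW with the finite-dimensional distributions of the VRJP jump chain after integrating over the jump times and over the Gamma weights; this is elementary but computationally heavier, and the determinant identity resurfaces in disguise.)
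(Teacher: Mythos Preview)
The present paper does not prove Theorem~\ref{annealed} at all: it is quoted verbatim from \cite{sabot-tarres} and used as a black box. So there is no ``paper's own proof'' to compare against here; what you are really trying to reconstruct is the argument in \cite{sabot-tarres}.

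On the substance of your proposal: the route you describe as your main strategy --- compute the VRJP mixing measure (essentially Theorem~\ref{meas}), push it forward to conductances $c_{ij}=W_{ij}e^{U_i+U_j}$, integrate out both $W$ and $U$, and then recognize the Coppersmith--Diaconis density --- is logically consistent but is not how the result is established in \cite{sabot-tarres}, and it is considerably more delicate than you suggest. Two concrete issues: first, the map $(W,U)\mapsto c$ goes from a space of dimension $|E|+|V|-1$ to one of dimension $|E|$, while the Coppersmith--Diaconis measure lives on \emph{projective} conductances, so the ``change of variables'' step needs a careful gauge-fixing that you have not specified; second, you are invoking Theorem~\ref{meas} as an ingredient, but in \cite{sabot-tarres} the equivalence ERRW\,$\leftrightarrow$\,VRJP-in-Gamma is proved \emph{before} and independently of the computation of the limiting field, so your ordering inverts the logical flow of the original.

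The argument actually used in \cite{sabot-tarres} is the one you relegate to a parenthetical at the end. One time-changes each vertex so that the VRJP holding times become i.i.d.\ exponentials on directed edges (a Rubin-type construction); the jump chain then depends on the weights $W_e$ only through ratios of sums of these exponentials. Mixing $W_e\sim\mathrm{Gamma}(a_e,1)$ turns the edge clocks into independent Yule processes with immigration parameters $a_e$, and the induced jump chain is precisely a P\'olya-urn / de Finetti description of the ERRW with initial weights $(a_e)$. No determinant identity and no knowledge of either mixing measure is required. Your instinct that this direct route is ``computationally heavier'' is wrong: it is in fact short, and it is the proof.
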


The next result concerns VRJP $(Y_t)_{t\ge0}$ on a finite graph $G$, with $|V|=N$, given fixed weights
  $(w_e)_{e\in E}$; let $\P^{VRJP}_{i_0}$ be its law, starting from $i_0\in V$. Note that the two results 
below are not stated exactly in the same terms as in
 proposition 1 and theorem 2 of \cite{sabot-tarres} but can be obtained by the following simple transformation :
  $U_i$ (resp. $\log L_i$) here corresponds to $U_i-U_{i_0} $ (resp.  $T_i$) of \cite{sabot-tarres}.  
\begin{proposition}[Sabot and Tarr\`es \cite{sabot-tarres}]
\label{pconv}
Suppose that $G$ is finite and set $N=\vert V \vert$.  For all $i\in V$, the following limits exist 
 $\P^{VRJP}_{i_0}$ a.s.
$$
U_i =\lim_{t\to \infty} \left(\log L_i(t) - \log L_{i_0}(t)\right).
$$
\end{proposition}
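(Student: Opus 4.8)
\medskip\noindent\textbf{Proof strategy.}
The plan is to reduce the statement to the convergence of the ratios $L_i(t)/L_{i_0}(t)$ to finite positive limits, and then to analyse those ratios by using the fact that, since $G$ has no loops, the successive sojourn times of the VRJP are exponentially distributed with parameters that are explicit functions of the current local-time vector; this presents the process as a generalised P\'olya urn, to which a Rubin-type construction and a martingale argument apply.

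First I would note that it suffices to show that, for every $i$, $L_i(t)/L_{i_0}(t)$ converges a.s.\ to a finite positive limit: then $\log L_i(t)-\log L_{i_0}(t)$ converges to its logarithm, and by connectedness of $G$ it is enough to treat the case of $i$ adjacent to $i_0$ and iterate along a path joining $i_0$ to an arbitrary vertex. It is also useful to recall that $L_i(t)\to\iy$ a.s.\ for every $i$, a basic property of VRJP on a finite connected graph (see \cite{dv1,dv2}): the process is non-explosive, since the total jump rate out of $i$ at time $t$ is at most $(\sum_{k\sim i}w_{\{i,k\}})\,(N+t)$, and it is recurrent, so every vertex accumulates infinite local time; this is what will make the limits above finite and strictly positive.

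The structural point is the following observation. During a sojourn of $(Y_t)$ at $i$, only $L_i$ grows while every $L_j$ with $j\sim i$ stays constant (no loops), so the exit rate $\sum_{j\sim i}w_{\{i,j\}}L_j(t)$ is constant throughout the sojourn; hence, conditionally on the configuration $(L_k)_k$ at the start of the sojourn, the increment of $L_i$ over that sojourn is exponential with parameter $\sum_{j\sim i}w_{\{i,j\}}L_j$, after which the walk jumps to a neighbour $j$ with probability proportional to $w_{\{i,j\}}L_j$. Read along its successive sojourns, the VRJP is therefore a nearest-neighbour walk on $G$ in which the local-time vector $(L_k)_k$ plays the role of an urn. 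Following Rubin's idea, I would realise this walk as a deterministic functional of a family of independent Poisson processes, one of rate $w_{\{i,j\}}$ attached to each directed edge $(i,j)$, by running the clock of $(i,j)$ at speed $L_j$ whenever the walk sits at $i$ and jumping to the neighbour whose clock rings first; memorylessness of the Poisson processes shows this reproduces the law of the VRJP. In this representation, convergence of $L_i(t)/L_{i_0}(t)$ becomes a statement about the driving clocks that one settles by exhibiting a suitable nonnegative martingale built out of the $L_i$ and bounded in $L^1$ — so that it converges a.s. — or, equivalently, by a stochastic-approximation argument; this is the multi-vertex counterpart of the classical computation for linearly reinforced walk on two points. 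Finiteness and positivity of the limits then follow from the lower bound $w_{\{i,j\}}L_j\ge w_{\{i,j\}}$ on the exponential parameters together with recurrence, and setting $U_i:=\lim_t(\log L_i(t)-\log L_{i_0}(t))$ gives $U_{i_0}=0$ and the proposition.

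The main obstacle is the martingale step on graphs with cycles: there the urn is genuinely high-dimensional and the increments of the various $L_i$ occur at non-synchronized times, so isolating the correct martingale (or setting up and controlling the corresponding Robbins--Monro scheme) is the technical heart of the argument; pushed further, this is also the step that identifies the joint law of $(U_i)_{i\in V}$ with the marginal of the supersymmetric hyperbolic sigma model.
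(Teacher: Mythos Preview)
This proposition is not proved in the present paper: it is simply quoted from \cite{sabot-tarres}, so there is no in-paper argument to compare your attempt against.  Judging your sketch on its own merits, the preliminary reductions are fine --- the passage to the ratios $L_i(t)/L_{i_0}(t)$, the observation that the exit rate is constant during each sojourn so that the holding times are genuinely exponential, and the Rubin-type coupling with independent Poisson clocks are all correct and standard.  The problem is that the argument stops exactly where the work begins.  You say one should ``exhibit a suitable nonnegative martingale built out of the $L_i$'' and then, two sentences later, concede that on graphs with cycles ``isolating the correct martingale \ldots\ is the technical heart of the argument'' --- and you do not supply it.  The Rubin construction is only a convenient realisation of the process; by itself it does not force any ratio $L_i/L_j$ to converge, and the stochastic-approximation alternative you mention is likewise just a name, not an argument.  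So what you have written is an outline of where a proof would have to go, with the decisive step left blank.

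For orientation, the route taken in \cite{sabot-tarres} is not the urn/Rubin picture you sketch.  It goes through the time change $C(s)=\sum_{i\in V}(L_i(s)^2-1)$ that already appears in Theorem~\ref{meas}(ii): in that scale the time-changed process $Z_t=Y_{C^{-1}(t)}$ has jump rates of a form for which one can compute the joint density of the local-time vector explicitly, and both the a.s.\ convergence of $\log L_i-\log L_{i_0}$ and the identification of the limiting law (the $\sigma$-model density in Theorem~\ref{meas}(i)) come out of that single computation rather than from a separate martingale constructed beforehand.
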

\begin{theorem}[Sabot and Tarr\`es \cite{sabot-tarres}]
\label{meas}
{\bf(i)} Under  $\P^{VRJP}_{i_0}$, $(U_i)_{i\in V}$ has the following density with respect to the Lebesgue measure on  
$$\hhh_0=\{(u_i)\in\R^V: \; u_{i_0}=0\}$$
\begin{eqnarray}\label{density}
 d\rho_{w,V} (u)= \frac{1}{(2\pi)^{(N-1)/2}} e^{-\sum_{j\in V} u_j} 
e^{-H(w,u)} \sqrt{D[m(w,u)]}\prod_{j\in V\setminus\{i_0\}}du_j
\end{eqnarray}
where
$$
H( w,u)= \sum_{\{i,j\}\in E}  w_{i,j} (\cosh(u_i-u_j)-1)
$$
and $D[m(w,u)]$ is any diagonal minor of the $N\times N$ matrix $m(w,u)$ with coefficients
$$
m_{i,j}(w,u)=\left\{ \begin{array}{ll} {-  w_{i,j}} e^{u_i+u_j}  & \hbox{ if $i\neq j$}
\\
\sum_{k\in V}  w_{i,k} e^{u_i+u_k} &\hbox{ if $i=j$}
\end{array}\right.
$$

\noindent{\bf(ii)}  Let $C$ be the following positive continuous { increasing} functional of $X$:
$$C(s)=\sum_{i\in V} L_i^2(s)-1,$$
and let $$Z_t=Y_{ C^{-1}(t)}.$$ 
 Then, conditionally on $(U_i)_{i\in V}$, $Z_t$  is a Markov jump process starting from  $i_0$, with jump rate 
from  $i$ to $j$
$$
\demi w_{i,j}e^{U_j-U_i}.
$$
In particular, the discrete time process associated with $(Y_s)_{s\ge0}$ is a mixture of reversible Markov chains 
with conductances $ w_{i,j} e^{U_i+U_j}$.
\end{theorem}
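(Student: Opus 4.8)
The plan is to prove Theorem~\ref{meas} by combining the exact identity of Proposition~\ref{pconv} with a direct analysis of the VRJP on a finite graph through its ``time-line'' representation, following the route of Sabot and Tarr\`es~\cite{sabot-tarres}. The starting point is that the VRJP is, by a change of time, a linearly vertex-reinforced jump process whose local times $L_i(t)$ grow, and that on a finite graph the normalized occupation vector converges. So the first step is to record that $U_i=\lim_t(\log L_i(t)-\log L_{i_0}(t))$ exists a.s.\ (Proposition~\ref{pconv}), and to identify, conditionally on $(U_i)$, the asymptotic jump mechanism: after the Rubin-type time change $C(s)=\sum_i L_i^2(s)-1$, the process $Z_t=Y_{C^{-1}(t)}$ should be shown to have jump rate $\tfrac12 w_{i,j}e^{U_j-U_i}$ from $i$ to $j$. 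I would obtain this by writing the instantaneous VRJP rate $w_{i,j}L_j(t)$ in terms of $L_i,L_j$, observing that on the $C$-scale the prefactor $L_i$ is absorbed (since $dC=2\sum_i L_i\,dL_i$ along the trajectory), and then replacing $L_j(t)/L_i(t)$ by its limit $e^{U_j-U_i}$; the reversibility with conductances $w_{i,j}e^{U_i+U_j}$ is then immediate because $\tfrac12 w_{i,j}e^{U_j-U_i}\cdot e^{U_i}\cdot\text{(something)}$ symmetrizes, and the discrete-time skeleton of a Markov jump process with these rates is the reversible chain with those conductances. This establishes part~\textbf{(ii)}, modulo the law of $(U_i)$.

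The heart of the proof is part~\textbf{(i)}, the density formula \eqref{density}. The approach I would take is to first establish the analogous statement for the VRJP ``stopped'' or observed up to a stopping rule that makes the graph effectively a finite tree-like exploration, then glue, or — more efficiently — to directly compute the joint law of $(L_i(t))$ for large $t$ via a martingale/Girsanov argument. Concretely: the VRJP has an explicit description as a mixture of Markov jump processes (this is essentially the content one is proving), so one can guess the mixing measure and verify it. I would define, for a candidate measure $d\rho_{w,V}(u)$ on $\hhh_0$, the mixture of jump processes with rates $\tfrac12 w_{i,j}e^{u_j-u_i}$, and check that the resulting annealed law of the trajectory coincides with that of the VRJP. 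This reduces to matching the ``partition function'': for a finite path $i_0\to i_1\to\cdots\to i_n$ of the VRJP with holding times, the VRJP probability density involves a product of instantaneous rates $w_{i_{k-1},i_k}L_{i_k}$ and exponential survival factors $\exp(-\int \sum_{j\sim Y_s}w_{Y_s,j}L_j(s)\,ds)$; one integrates out the $U$-field against $d\rho_{w,V}$ and must recover exactly this. The algebraic identity that makes this work is the Cauchy–Binet / matrix-tree type expansion of $\sqrt{D[m(w,u)]}$ together with the ``magic'' change of variables $t_i=\log L_i$, under which the reinforced dynamics becomes, in the $t$ variables, a process whose generator has the matrix $m(w,u)$ as (essentially) its Green's operator.

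A cleaner way to organize part~\textbf{(i)}, and the one I expect to actually carry out, is an induction on $N=|V|$ using a restriction/marginalization property: if one knows the density on $V\setminus\{k\}$ for a ``leaf-like'' vertex $k$, one recovers it on $V$ by multiplying by the appropriate conditional density of $u_k$ given $(u_j)_{j\ne k}$, which is again of the same form because $H(w,u)$ is a sum over edges and the minor $D[m(w,u)]$ satisfies a Schur-complement recursion when one deletes a row and column. The base case $N=1$ is trivial. The supersymmetric/Gaussian structure enters through the factor $\sqrt{D[m]}$: it is the Berezinian of an auxiliary fermionic integral, or equivalently $\int e^{-\sum_{i,j}m_{i,j}(w,u)\xi_i\bar\xi_j}$ over Grassmann variables, which is what linearizes the $\cosh$ interaction when one also introduces a bosonic conjugate — this is the link to the Disertori–Spencer–Zirnbauer~\cite{dsz} sigma model that the authors exploit elsewhere, but for \emph{this} theorem it is only needed as a bookkeeping device for the determinant.

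The main obstacle, I expect, is the exact identification of the limiting jump rates and the normalization constant $(2\pi)^{-(N-1)/2}$: one must pass to the limit $t\to\infty$ in the VRJP trajectory law and argue that no mass escapes and that the Jacobian of the change of variables $(L_i(t))\mapsto(U_i)$, together with the Gaussian fluctuations of the remaining ``fast'' degrees of freedom, produces precisely the claimed density with the stated constant — the constant is the fingerprint of a Laplace/Gaussian integration over the ``transverse'' direction in the $\log L$ coordinates, and getting it right requires care with the time-change $C$ and with the fact that $U$ is defined only up to the constraint $u_{i_0}=0$. Controlling this limit rigorously (tightness of $(\log L_i(t)-\log L_{i_0}(t))$, exchange of limit and expectation, and the precise Gaussian correction) is the delicate analytic point; the algebraic identities (Schur complement for the minor, edge-additivity of $H$, matrix-tree expansion) are routine once the correct statement is in hand.
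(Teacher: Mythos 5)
First, note that the paper you are writing into does not prove Theorem \ref{meas} at all: it is imported verbatim (up to the harmless change of variables mentioned before Proposition \ref{pconv}) from \cite{sabot-tarres}, so the relevant comparison is with the proof given there. Against that proof, your sketch has two genuine gaps. The more serious one concerns part (ii). The theorem is an \emph{exact} distributional identity: conditionally on $(U_i)$, the whole time-changed process $(Z_t)_{t\ge 0}$ is a time-homogeneous Markov jump process with rates $\tfrac12 w_{i,j}e^{U_j-U_i}$ on every finite time window. Your argument computes the instantaneous rate after the change of time (correctly, it is $\tfrac12 w_{i,j}L_j/L_i$, since $dC=2L_{Y_s}\,ds$) and then "replaces $L_j/L_i$ by its limit $e^{U_j-U_i}$". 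That only shows the rates converge almost surely to the claimed ones; it does not show that the conditional law of $Z$ given $U$ is that of the Markov process -- at finite times the rates are not the limiting rates, and $U$ is not measurable with respect to the past, so this is a de Finetti-type statement, not an asymptotic one. In \cite{sabot-tarres} it is obtained by computing explicitly the joint density of the position and local times of the time-changed process at a fixed time $t$ (a continuous-time analogue of the Coppersmith--Diaconis computation) and identifying it with the annealed density of the candidate mixture; parts (i) and (ii) come out of that single computation simultaneously, with \eqref{density}, including the factor $\sqrt{D[m(w,u)]}$ and the constant $(2\pi)^{-(N-1)/2}$, arising from the $t\to\infty$ Laplace/Gaussian asymptotics of the explicit finite-$t$ density.

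Second, part (i) in your write-up remains a plan rather than a proof. The "guess the mixing measure and verify it by matching trajectory densities" step is precisely the nontrivial computation: one must actually integrate, over $u$ against \eqref{density}, the product of exponential survival factors and jump rates of the candidate Markov process and recover the VRJP path density, and invoking Cauchy--Binet or the matrix-tree expansion does not by itself produce this identity. The alternative induction on $N$ rests on an unproved (and, as stated, doubtful) marginalization claim: integrating $u_k$ out of \eqref{density} does not visibly return a density of the same form on $V\setminus\{k\}$ with the same weights -- the dependence of $D[m(w,u)]$ on $u_k$ is not a simple Schur complement within the same family, and the conditional density of $u_k$ given the rest is not of product form -- and even if such a recursion held for the candidate density, it would not by itself identify that density with the law of the field $U$ defined from the VRJP in Proposition \ref{pconv}. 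The analytic points you flag at the end (tightness of $\log L_i(t)-\log L_{i_0}(t)$, exchange of limit and expectation, the Gaussian correction giving the normalization) are not side issues but the actual content of the proof in \cite{sabot-tarres}, and they are left open in your proposal.
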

\begin{remark}
The diagonal minors of the matrix $m(w,u)$ are all equal since the sums on any line or column
of the coefficients of the matrix are null. By the matrix-tree theorem, if we let $\T$ be the set of 
spanning trees of $(V,E,\sim)$, then $D[m(w,u)]=\sum_{T\in\T}\prod_{\{i,j\}\in\T}w_{\{i,j\}}e^{u_i+u_j}$.
\end{remark}

\paragraph{\bf Notation and conventions}
In the sequel we fix $d\ge3$.

A sequence $\sigma=(x_0, \ldots, x_n)$ is a path from
$x$ to $y$ in $\Z^d$ if $x_0=x$, $x_n=y$ and $x_{i+1}\sim x_i$
for all $i=1, \ldots, n$. 

Let
$$V_n=\{i\in \Z^d, \; | i|_\iy\le n\}$$ 
be the ball centered at $0$ with radius $n$, and let 
$$\partial  V_n=\{i\in \Z^d, \; | i|_\iy = n\}$$
be its boundary. We denote by $E$ the set of edges in $ \Z^d$ and by $E_n$ 
the set of edges contained in the hypercube $ V_n$.  We denote by $\tilde E_n$ 
the associated set of directed edges.

In the rest of the paper we will denote by $|.|$ the $L^2$ norm on $\R^d$.

Let  $(a_{ij})_{i,j\in\Z^d, i\sim j}$ be the  family of initial positive weights for the ERRW, and let
\begin{equation}\label{adef}
a:=\inf_{e\in E}a_e.
\end{equation}
In the proofs, we will denote by  $\Cst(a_1,a_2,\ldots, a_p)$ a positive constant depending
only on  $a_1$, $a_2$, $\ldots$ $a_p$, and by $\Cst$  a  universal
positive constant.

In the sequel we fix $i_{0}=0$. Let $\Pb_0$ (resp. $\Pb_0^{V_n}$) be the law of the ERRW on $\Z^d $ (resp. on $V_{n}$).
Theorem \ref{meas} ensures that on every finite volume $V_{n}$  the ERRW is a mixture of 
reversible Markov chains with random conductances $(W^{U}_{e})_{e\in E_{n}}$ where
$W_{ij}^{U}= W_{ij}e^{U_{i}+U_{j}}$ and the law for the random variables $(W,U)$ has 
joint distribution on $\R_+^{E_n}\times \hhh_0$ given by
\[
d\rho_{V_{n}} (w,u) =   d\rho_{w,V_{n}} (u)  
\prod_{e\in E_{n}} \frac{e^{-w_{e}} w_{e}^{a_{e}-1}}{\Gamma (a_{e})}dw_e,
\]
where $ d\rho_{w,V_{n}} (u) $ was defined in \eqref{density}. 
Let $ \E_0^{V_{n}}$ be the average with respect to the joint law for $(W,U)$  (mixing measure).
Note that we cannot define this average on an infinite volume, since we do not know if the
limiting measure exists. Denote by  $\lan\cdot\ran_{V_n}$ the corresponding marginal in $U$.

\paragraph{\bf Warning} We use a capital letter $W, U\dotsb $ to denote a random variable
and a smallcase letter $w,u,\dotsc $ to denote a particular realization of the variable.  The same
is true for any function of such variables. In some cases though we do not state the argument explicitely, 
to avoid heavy notations. It should become clear from the context when the corresponding argument 
has to be regarded as a random variable.

The proof of Theorem \ref{thm:transience} will follow from the following result. 
\begin{theorem}
\label{thm:fluctuations}
Fix $d\ge3$. For all $m>0$, there exists  $a_c(m,d)>0$ such that, if $a\ge a_c(m,d)$ then, 
for all $n\in\N$, $x$, $y$ $\in V_n$, 
$$\lan\cosh^m(U_x-U_y)\ran_{V_n}\le2.$$
\end{theorem}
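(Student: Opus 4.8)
The plan is to transport the multiscale analysis of the supersymmetric hyperbolic sigma model of Disertori, Spencer and Zirnbauer \cite{dsz} to the present random-conductance setting. The structural starting point is that under the mixing measure $d\rho_{V_n}(w,u)$ the conductances $(W_e)_{e\in E_n}$ are independent $\mathit{Gamma}(a_e,1)$ variables and, conditionally on them, $(U_i)_{i\in V_n}$ has exactly the sigma-model law $d\rho_{w,V_n}$ of \eqref{density}. Since $a=\inf_e a_e$ is large, a typical conductance field has all its values comparable to the large numbers $a_e$, and the set of edges with $W_e\notin[a/2,2a_e]$ is, for $a$ large, a very sparse random subset of $E_n$: each edge lies in it with probability at most $\Cst e^{-\Cst a}$, and these events are independent. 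I would therefore bound $\lan\cosh^m(U_x-U_y)\ran_{V_n}$ by splitting according to this ``bad set'': on the event that it misses a neighbourhood of the relevant region one is reduced to a homogeneous large-coupling sigma-model estimate, while configurations with bad edges are controlled by a crude deterministic bound on $\cosh^m(u_x-u_y)$ read off from \eqref{density} together with the smallness and sparseness of the bad set (so that bad edges neither soften the field where it matters nor open low-resistance shortcuts). This reduces the theorem to proving, uniformly in $n$, $x$, $y$ and over conductance fields $w$ with $w_e\ge a/2$, that $\lan\cosh^m(u_x-u_y)\ran_{w,V_n}\le 1+\epsilon(a)$ with $\epsilon(a)\to0$ as $a\to\infty$.

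For this core estimate I would first record the relevant Ward identities. The density $d\rho_{w,V_n}$ is the dimensional reduction of a fully supersymmetric measure on $H^{2|2}$-valued fields possessing a noncompact internal symmetry; pinning the field at $i_0=0$ breaks this symmetry and yields a family of exact identities relating polynomial-in-$\cosh$ observables of $u$ to the random Green's function $G^{(w,u)}$ associated with $m(w,u)$ --- equivalently, to the Green's function of the jump process of Theorem \ref{meas}(ii) killed at $i_0$, after conjugation by $e^{u_i}$. Their upshot, applied for all $m$ at once (or via a short bootstrap on $m$ using H\"older's inequality), is that every moment $\lan\cosh^m(u_x-u_y)\ran_{w,V_n}$ is dominated by a polynomial, with $a$-independent coefficients, in expectations of diagonal and near-diagonal quantities such as $G^{(w,u)}_{xx}$ and $G^{(w,u)}_{xx}-G^{(w,u)}_{xy}$. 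The whole problem thus becomes: show that for $a$ large these random Green's-function quantities have $O(1)$ expectations, and that $\lan G^{(w,u)}_{xx}-G^{(w,u)}_{xy}\ran$ is uniformly small.

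This is where $d\ge3$ and the bulk of the work enter. I would prove the Green's-function bounds by induction along a sequence of scales $L_0\ll L_1\ll L_2\ll\cdots$ (with $L_{k+1}$ a fixed power of $L_k$, say), the inductive statement at scale $L_k$ being a bound on the relevant $\cosh$-moments inside every box of side $L_k$. The base case is large-coupling stiffness: with $w_e\ge a/2$ the factor $e^{-H(w,u)}$ in \eqref{density} concentrates $u$ near a constant, giving $\lan\cosh^m(u_x-u_y)\ran_{w,V_{L_0}}\le 1+\Cst e^{-\Cst a}$ by a direct computation. For the induction step I would tile a box of side $L_{k+1}$ by boxes of side $\sim L_k$, use the Markov (DLR) property of the sigma model to condition the field on the boundary shells of the sub-boxes, apply the Ward identities inside, and expand the conditioned Green's function over paths: advancing by one lattice step costs a factor small in $a$ (a finite-energy estimate, uniform over conductances $\ge a/2$), while the sum over the geometry of long paths converges because the underlying walk on $\Z^d$ is transient for $d\ge3$. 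The entropic cost of the tiling --- a factor $\Cst(L_{k+1}/L_k)^d$ for the number of sub-boxes --- is then absorbed by the per-scale gain, so the inductive bound closes at level $k+1$ \emph{with no deterioration of the constant}, provided $a\ge a_c(m,d)$ with $a_c$ independent of $k$. Letting $k\to\infty$ and combining with the first paragraph gives the theorem, the constant $2$ comfortably absorbing $1+\epsilon(a)$ and the bad-edge contribution.

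The principal obstacle is the \emph{uniformity in the scale}: there are $\sim\log n$ scales between $L_0$ and $n$, so any multiplicative loss per induction step would destroy the bound, and the estimate must be genuinely self-sustaining. Making this work rests on three points: that the Ward identities are used in an exact, error-free form, so that only Green's-function bounds feed the recursion; that the stiffness gain per scale dominates the tiling entropy $2^d$, which both forces $a$ large and is exactly where the transience of simple random walk on $\Z^d$, $d\ge3$, is used; and that the $\mathit{Gamma}$-distributed conductances, heavy towards small values, are shown --- through the sparseness of atypical edges --- not to spoil the worst-case Green's-function bounds. This last point, the handling of the random environment, is the main new difficulty compared with the fixed-conductance analysis of \cite{dsz}.
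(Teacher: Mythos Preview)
Your proposal has a genuine gap in the treatment of the random conductances, and this is precisely the point where the paper's argument departs from what you sketch.

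You propose to split according to the ``bad set'' of edges with $W_e\notin[a/2,2a_e]$ and, on the complementary event, invoke the fixed-conductance result of \cite{dsz}. The difficulty is the complementary term. There is no ``crude deterministic bound on $\cosh^m(u_x-u_y)$ read off from \eqref{density}'': the field $u$ ranges over all of $\hhh_0$ and $\cosh^m(u_x-u_y)$ is unbounded. Any attempt to control the bad-event contribution by H\"older, e.g.\ $\lan\cosh^m(U_x-U_y)\indic_{\mathcal G^c}\ran\le\lan\cosh^{pm}(U_x-U_y)\ran^{1/p}\Pb(\mathcal G^c)^{1/q}$, is circular, since the first factor is a higher moment of the very quantity you are trying to bound. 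Moreover the number of edges in $V_n$ is of order $n^d$, so for fixed $a$ and large $n$ the bad set is almost surely nonempty; you therefore cannot restrict to the good event. Nor is there an obvious monotonicity of $\lan\cosh^m(u_x-u_y)\ran_{w,V_n}$ in $w$ that would let you replace a small $w_e$ by $a/2$. The paper in fact remarks explicitly (beginning of Section~\ref{sketch}) that approaches working conditionally on the conductances $W_e$ ``have not proved to be efficient to understand the delocalized phase on $\Z^d$''.

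What the paper does instead is to \emph{integrate out the Gamma conductances first}. Because $\int_0^\infty w^{a_e-1}e^{-wB_e}\,dw/\Gamma(a_e)=B_e^{-a_e}$, and the determinant $D[m(w,u)]$ is linear in each $w_e$ via the matrix-tree expansion, the $W$-integration can be carried out in closed form. After also adjoining a Gaussian free field $S$ with conductances $W_{ij}e^{U_i+U_j}$ (which makes the symmetry more transparent), one obtains an explicit marginal density for $(U,S)$ (Proposition~\ref{newm}):
$$\mu_{a,\Lambda}(u,s)=\frac{1}{(2\pi)^{N-1}}\Bigl[\prod_{e}B_e^{-a_e}\Bigr]e^{-\sum_j u_j}D[M(u,s)],$$
with $B_{ij}=\cosh(u_i-u_j)+\tfrac12 e^{u_i+u_j}(s_i-s_j)^2$ and $M$ built from the effective conductances $c_{ij}=a_{ij}e^{u_i+u_j}/B_{ij}$. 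This is \emph{not} the sigma model of \cite{dsz}, but it has enough of the same supersymmetric structure that one can prove Ward identities directly for it: $\lan B_{xy}^m(1-mD_{xy})\ran=1$ for $m\le a/4$ (Lemma~\ref{ward}), where $D_{xy}$ is an effective resistance in the conductances $c_{ij}^{xy}$. The randomness of the $W_e$ has been completely absorbed into the form of the effective measure, so no bad-edge analysis is needed.

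The multiscale part also differs from your outline. Rather than a box tiling with DLR conditioning, the paper follows \cite{dsz} closely: it works with diamonds and deformed diamonds $R_{xy}$, proves \emph{protected} Ward inequalities $\lan\prod_j B_{x_jy_j}^{m_j}\ochi_{x_jy_j}\ran\le\prod_j(1-m_jC/a)^{-1}$ (Lemma~\ref{wardp}) where $\ochi_{xy}$ is the indicator that the field is tame inside $R_{xy}$, and runs an induction on $\max_i|x_i-y_i|$ (Theorem~\ref{thm:induction}). The unprotected part $1-\ochi_{xy}$ is handled by a four-case Chebyshev expansion together with a hierarchical ``good points'' estimate (Lemma~\ref{Rn}); the key geometric input is a uniform bound on the effective resistance $D_{xy}^N$ inside a diamond when $\ochi_{xy}$ holds (Proposition~\ref{resistance-bound}), which is where $d\ge3$ enters. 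The recursion is set up so that the constant does not deteriorate with the scale, exactly the uniformity concern you correctly identify.

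In short: the essential new idea you are missing is to perform the $W$-average \emph{before} doing any analysis, obtaining a new effective model for which one can rederive Ward identities from scratch; this replaces your bad-edge decomposition, which as written does not close.
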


The proof of Theorem \ref{thm:fluctuations} is the purpose of the rest of this paper, and adapts the argument 
of Disertori, Spencer and Zirnbauer \cite{dsz}, which implied in particular transience of VRJP with large 
conductances $w_e$ \cite{sabot-tarres}.  Let us first show how Theorem \ref{thm:fluctuations} implies 
Theorem \ref{thm:transience}.

\paragraph{\bf Proof  of Theorem \ref{thm:transience}.}
Given  $(w, u)\in\R_+^E\times\hhh_0$, denote by $P_0^{w^{u}}$ the law of the Markov chain in conductances 
$w_{i,j}^{u}= w_{ij} e^{u_i+u_j}$ starting from $0$.
Let $H_{\partial V_n}$ be the first hitting time of the boundary $\partial  V_n$, and let 
$\tilde{H}_0$ be the first return time to the point $0$.  

We seek to estimate  $\Pb_0 ( H_{\partial V_n} <\tilde{H}_0)$ the probability that the ERRW
hits the boundary of $V_{n}$ before coming back to $0$. Since this event depends only
on the history of the walk inside the hypercube  $V_{n}$, we have
\begin{equation}\label{pbes}
\Pb_0 ( H_{\partial V_n} <\tilde{H}_0)= \Pb_0^{V_n} ( H_{\partial V_n} <\tilde{H}_0)=
\Es_0^{V_{n}}\left[ P^{W^{U}}_0 (H_{\partial  V_n}<\tilde{H}_0)\right]
\end{equation}
where in the last equality we used the last part of Theorem \ref{meas}. Now 
$P^{w^{u}}_0 (H_{\partial  V_n}<\tilde{H}_0)$ is the probability for a Markov chain inside $V_{n}$
with conductances $w^{u}$ to hit the boundary before coming back to $0$.
This probability is related, by identity \eqref{rpe} below, to the  effective resistance between $0$  
and $\partial V_n$ (see for instance in Chapter 2 of \cite{lyons-peres})
\[
R(0,\partial V_n,w^{u})= \inf_{\theta:\tilde E_{n} \to \R} \demi\sum_{e\in \tilde E_{n}} \frac{ \theta (e)^{2}  }{ w^{u}_{e}}
\]
where the infimum is taken on unit flows $\theta$ from 0 to $\partial  V_n$ : $\theta $ is defined on the set of directed edges $\tilde E_n$ 
and must satisfy $\theta ((i,j))=-\theta ((j,i))$ and $\sum_{j\sim v}\theta ((v,j))=\delta_{v,0}$
for all $v\in V_{n}$. Denote by  $R(0,\partial V_n)$ 
the effective resistance between $0$ and $\partial V_n$ for conductances $1$.

Classically we have
\begin{equation}
\label{rpe}
 w_{0}^{u} R(0, \partial  V_n,w^{u}) =\frac{1}{ P^{w^{u}}_0 (H_{\partial  V_n}<\tilde H_0) }
\end{equation}
with $w_{0}^{u}=\sum_{j\sim 0} w_{0,j}^{u}$.
  Using \eqref{pbes} and   Jensen's inequality, we deduce
\begin{align}
&\frac{1}{\Pb_0 ( H_{\partial V_n} <\tilde H_0)}= 
\frac{1}{ \E_0^{V_{n} } \left[ P^{W^{U}}_0 (H_{\partial  V_n}<\tilde H_0)\right] }\cr
&\qquad \qquad  \leq 
\Es_0^{V_{n} }\left[ \frac{1}{P^{W^{U}}_0 (H_{\partial  V_n}<\tilde H_0) }\right]
= 
 \Es_0^{ V_n}\left[ W_0^{U} R(0, \partial  V_n,W^{U})\right].
 \label{inbd}
\end{align}
We will show below that, if $\min_{e\in E} a_{e}=a> \max\{a_c(3,d),3 \}$, then
\begin{eqnarray}\label{Resistance}
\Es_0^{ V_n}\left[ W_0^{U} R(0,\partial V_n,W^{U})\right]\le\Cst(a,d) R(0,\partial V_n)
\end{eqnarray}
This will enable us to conclude: since $\limsup R(0,\partial  V_n) <\infty$ for all $d\ge3$, 
\eqref{inbd} 
and \eqref{Resistance} imply that $ \Pb_0( \tilde H_0 =\infty)>0$. 
\cqfd
\paragraph{\bf Proof of \eqref{Resistance}}
Let $\theta$ be the unit flow from 0 to $\partial  V_n$ which minimizes the $L^2$ norm. Then
$$
R(0,\partial  V_n, w_{u}) \le \demi\sum_{(i,j)\in \tilde E_n}\frac{1}{w_{i,j}^{u} }  \theta^2(i,j),
$$
and 
$$
R(0,\partial  V_n) = \demi\sum_{(i,j)\in \tilde E_n} \theta^2(i,j).
$$
Now, if $a>\max\{\tilde{a}_{c} (3,d),3 \}$ then, using Theorem \ref{thm:fluctuations},
\begin{align*}
 \Es_0^{ V_n}\left[\frac{W_0^{U} }{W_{i,j}^{U}}\right] &=
 \sum_{l\sim0}  \Es_0^{ V_n}\left[ \frac{W_{0,l} }{W_{i,j}} e^{U_{0}-U_{i}} e^{U_{l}-U_{j}} \right]
\\
&\le \sum_{l\sim0, \{i,j\}\ne\{0,l\}}
\left[\left (  \Es_0^{ V_n}\left[ \frac{W_{0,l}^3 }{W_{i,j}^3} \right]
\  \Es_0^{ V_n}\left[  e^{3 (U_{0}-U_{i})} \right] 
\Es_0^{ V_n}\left[  e^{3 (U_{l}-U_{j})} \right]\right)^{1/3} + \1_{{ \{i,j\}=\{0,l\}}}  \right]
\cr
&
=  \sum_{l\sim0,\{i,j\}\ne\{0,l\}} \left[\left ( \Es_0^{ V_n}\left[ W_{0,l}^3  \right] 
\Es_0^{ V_n}\left[ W_{i,j}^{-3} \right]
\lan e^{3(U_0-U_i)}\ran_{ V_n}  \lan e^{3(U_l-U_j)} \ran_{ V_n}\right)^{1/3} +\1_{{ \{i,j\}=\{0,l\}}}  
\right]
 \\
 &\le\Cst(a,d),
 \end{align*}
 where we also used the fact that $W_{i,j}$ are independent Gamma distributed random variables. 
\cqfd

\subsection{Organization of the paper}
Section \ref{sketch} introduces to the main lines of the proof, the geometric objects that are needed 
(diamonds and deformed diamonds), and states some Ward identities. 
Section \ref{good}  { introduces some definitions and estimates (in particular the notion of ``good point'') 
that will be used in the proof.}
Section \ref{induction} contains the main inductive argument. 
The Ward identities { we need} (Lemmas \ref{ward} and \ref{wardp}) are shown in Section   
\ref{ward-resist}, and { finally}  the estimate on the effective conductance given in 
Proposition \ref{resistance-bound} is proved in Section \ref{sec:res}.

\vskip1.truecm

\noindent {\bf \textit{ACKNOWLEDGMENTS.}} 
It is our pleasure to thank T. Spencer for very important suggestions on this work
and many useful discussions.
Margherita Disertori would like to thank Martin Zirnbauer for introducing her to the $H^{2|2}$ model
that plays a crucial role in this paper. She also thanks the Institute for Advanced Study for 
their hospitality while some of this work was in progress. Pierre Tarr\`es and 
Christophe Sabot are particularly grateful to Krzysztof Gawedzki for a helpful discussion on the 
hyperbolic sigma model, and for pointing out reference \cite{dsz}.  

\section{Introduction to the Proof}
\label{sketch}
\subsection{Marginals of $U$ and a first Ward identity}
Let us now fix $n\in\N$, and let $\La=V_n$, $E=E_n$, for simplicity. 

A key step in our proof is to study the law of $U$ after integration over the conductances $W_e$, in other words to focus on the marginal $\lan.\ran_\La$. Note that it is one of several possible approaches to the question of recurrence/transience for ERRW. Indeed, one could instead focus on the analysis of the law of the random conductances $W_{ij}e^{U_i+U_j}$ given by Coppersmith-Diaconis formula, or directly study ERRW from its tree of discovery using the existence of a limiting environment, as done in \cite{ack}, or possibly conclude from a.s. results conditionally on the conductances $W_e$, $e\in E$. So far these other approaches have not proved to 
be efficient to understand the delocalized phase on $\Z^d$. A key ingredient in our approach is that it enables to use the strength of Ward estimates as \cite{dsz}.

In fact we add a Gaussian Free Field variable $S$ with conductances 
$W_{ij}^{U}$ before integrating over $W$, as in the first step of \cite{dsz}, since the corresponding joint law again has more transparent symmetries
and is better suited for the subsequent analysis. 
More precisely, let 
\begin{equation}
\label{def:bij}
B_{xy}:=\cosh(U_x-U_y)+\demi e^{U_x+U_y}(S_x-S_y)^2
\end{equation}
for all $x$, $y$ $\in \Lambda$ (not necessarily neighbours).

Then $(W,U,S)$ has the following probability density 
on $\R_+^{E}\times\hhh_0\times\hhh_0$  
\begin{equation}\label{usold}
\tfrac{1}{(2\pi)^{(N-1)}} e^{-\sum_{j\in\La}u_{j}}  e^{-\sum_{e\in E}w_{e}(B_{e}-1)}
D[m(w,u)]
\ \prod_{k\ne 0} du_k ds_k\prod_{e\in E} \frac{e^{-w_{e}} w_{e}^{a_{e}-1}}{\Gamma (a_{e})}dw_e
\end{equation}
The marginal of this law in $(U,S)$ after integration over $W$, which we still call $\lan.\ran_\La$ or $\lan.\ran$ by a slight abuse of notation, is given  in the following Proposition \ref{newm}. 
\begin{proposition}\label{newm}
The joint variables $(U,S)$ have density on $\hhh_0\times\hhh_0$
\begin{equation}\label{usmes}
\mu_{a,\Lambda} (u,s)=\frac{1}{(2\pi)^{(N-1)}}\left[\prod_{e} \frac{1}{B_e^{a_e}} \right]
 e^{-\sum_{j\in\La}u_{j}} D[M(u,s)],
\end{equation}
where $D[M(u,s)]$ is any diagonal minor of the $N\times N$  matrix  $M(u,s)$ defined by
 $$
 M_{i,j}= \left\{\begin{array}{ll} {-c_{i,j}}, &\hbox{ $i\neq j$}
 \\
\sum_k{c_{i,k}}, &\hbox{ $i=j$}
\end{array}\right.
$$
and
\[
c_{i,j}:=\frac{a_{i,j}e^{u_i+u_j}}{ B_{i,j}}, \quad\forall , i\sim j
\]
Note that we slightly abuse notation here, 
in that $B_{i,j}$ is considered alternatively as function of $(u,s)$ and $(U,S)$. 
\end{proposition}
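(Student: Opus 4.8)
The plan is to obtain the density $\mu_{a,\Lambda}(u,s)$ by integrating the joint density \eqref{usold} of $(W,U,S)$ over the conductances $W_e$, $e\in E$, and checking that the $W$-integral produces exactly the factor $\prod_e B_e^{-a_e}$ together with the replacement of $D[m(w,u)]$ by $D[M(u,s)]$. First I would recall that \eqref{usold} factorizes, as a function of $w$, into a product over edges of $w_e^{a_e-1}e^{-w_e B_e}/\Gamma(a_e)$ multiplied by $D[m(w,u)]$, where by the matrix-tree theorem $D[m(w,u)]=\sum_{T\in\T}\prod_{\{i,j\}\in T} w_{\{i,j\}}e^{u_i+u_j}$ (this is the Remark following Theorem \ref{meas}, applied on $\Lambda$). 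So $D[m(w,u)]$ is a polynomial in the $w_e$ that is multilinear: each monomial uses each edge variable to power $0$ or $1$.

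The key computation is therefore the following. For a spanning tree $T$, integrating monomial by monomial,
\[
\int_{\R_+^E}\Bigl(\prod_{\{i,j\}\in T} w_{\{i,j\}}e^{u_i+u_j}\Bigr)\prod_{e\in E}\frac{w_e^{a_e-1}e^{-w_e B_e}}{\Gamma(a_e)}\,dw_e
=\Bigl(\prod_{\{i,j\}\in T}\frac{a_{\{i,j\}}e^{u_i+u_j}}{B_{\{i,j\}}}\Bigr)\prod_{e\in E}\frac{1}{B_e^{a_e}},
\]
since $\int_0^\infty w^{a-1}e^{-wB}\,dw=\Gamma(a)B^{-a}$ and $\int_0^\infty w^{a}e^{-wB}\,dw=\Gamma(a+1)B^{-a-1}=a\,\Gamma(a)B^{-a-1}$, and $\Gamma(a+1)/\Gamma(a)=a$; the power $a_e$ in the exponent comes from $a_e-1$ in the Gamma density plus the extra $+1$ on the tree edges. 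Summing over $T\in\T$ and pulling out the common factor $\prod_e B_e^{-a_e}$, the remaining sum $\sum_{T}\prod_{\{i,j\}\in T}c_{\{i,j\}}$ is, by the matrix-tree theorem again (now with conductances $c_{i,j}=a_{i,j}e^{u_i+u_j}/B_{i,j}$), exactly a diagonal minor $D[M(u,s)]$ of the Laplacian $M(u,s)$ defined in the statement. Multiplying by the $(U,S)$-dependent prefactor $(2\pi)^{-(N-1)}e^{-\sum_j u_j}$ left untouched by the $W$-integration yields \eqref{usmes}.

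The only genuinely delicate point is the exchange of the (finite) sum over spanning trees with the product of integrals over the $w_e$, i.e. justifying term-by-term integration: since $\T$ is finite, each $B_e\ge 1>0$ (because $\cosh\ge 1$ and the Gaussian term is nonnegative, so all integrals converge absolutely), this is routine, and one should also note that the answer is independent of which diagonal minor is taken, exactly as in the Remark (the row and column sums of $M(u,s)$ vanish by construction). I would also remark in passing that positivity and normalization of $\mu_{a,\Lambda}$ are automatic, being inherited from \eqref{usold} which is a bona fide probability density by Theorem \ref{meas} combined with the Gaussian integration over $S$ that produces the extra $(2\pi)^{-(N-1)/2}\sqrt{D[m(w,u)]}$ consistent with \eqref{density}. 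I expect no real obstacle here; the content is the Gamma integral identity and the double use of the matrix-tree theorem.
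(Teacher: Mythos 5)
Your proposal is correct and follows essentially the same route as the paper: integrate \eqref{usold} over the Gamma-distributed conductances, expand $D[m(w,u)]$ over spanning trees, evaluate the edge-wise Gamma integrals (giving $B_e^{-a_e}$ off the tree and $a_e B_e^{-a_e-1}$ on the tree), and reassemble the tree sum as $D[M(u,s)]$ with conductances $c_{i,j}=a_{i,j}e^{u_i+u_j}/B_{i,j}$. The paper's proof is exactly this computation, phrased via the integrals $I_e(a_e)$ and $I_e(a_e+1)$.
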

\paragraph{\bf Proof}
Integrating the density in  \eqref{usold} with respect to the  random independent conductances 
$W_e\sim \it{Gamma}(a_e,1)$
gives
\begin{align*}
&\tfrac{ e^{-\sum_{j}u_{j}} }{(2\pi)^{(N-1)}} \int  
\prod_{e} \frac{e^{-w_{e}} w_{e}^{a_{e}-1}dw_{e}}{\Gamma (a_{e})}
 e^{-\sum_{e\in E}w_{e}(B_{e}-1)}
D[m(w,u)]\cr
&=
\tfrac{ e^{-\sum_{j}u_{j}}}{(2\pi)^{(N-1)}}  
\sum_{T} \prod_{i\sim j\in T} e^{u_{i}+u_{j}} 
\left[\prod_{e\not \in T} I_{e} (a_{e})\right] \left[\prod_{e\in T} I_{e} (a_{e}+1)\right]\cr
&=\left[\prod_e \tfrac{1}{B_{e}^{a_{e}}}\right] \tfrac{ e^{-\sum_{j}u_{j}}}{(2\pi)^{(N-1)}}  
\sum_{T} \prod_{i\sim j\in T} 
\tfrac{a_{ij}e^{u_{i}+u_{j}}  }{B_{ij}}  = \mu_{a,\Lambda } (u,s)
\end{align*}
where in the second line we expand the determinant as a sum  over spanning trees and, for all $x>0$, 
$$I_{e}(x)=\frac{1}{\Gamma (a_{e})}\int_0^\iy  e^{-w B_{e}} w^{x-1}\,dw=\frac{\Gamma (x)}{\Gamma (a_e)B_{e}^{x}},$$
with in particular $I_{e} (a_{e})= B_{e}^{-a_{e}}$ and $I_{e} (a_{e}+1)= a_{e}/B_{e}^{a_{e}+1}.$
\qed

In the proof of delocalization in  \cite{dsz}, the first step  was a set of relations (Ward identities)
generated by internal symmetries.  Their analogue is  stated in Lemma \ref{ward}  (proved 
in Section \ref{ward-resist}), and involves a term of effective resistance 
$D_{x,y}$ depending on the variable $U$, defined as follows.
\begin{definition}
\label{def:er}
Let $D_{x,y}$ be the effective resistance between $x$ and $y$ for the conductances
$$
c_{i,j}^{x,y}:=c_{i,j}B_{x,y}e^{-u_x-u_y}= \frac{ a_{i,j}e^{u_i+u_j-u_x-u_y}B_{x,y}}{B_{i,j}}.$$ 
\end{definition}
 
\begin{lemma}
\label{ward}
For all $m\leq a/4=\min_{ij}a_{ij}/4$, for all $x$, $y$ $\in \La$ (not necessarily neighbours),  
$$
\lan B_{x,y}^m(1-mD_{x,y})\ran=1.
$$
\end{lemma}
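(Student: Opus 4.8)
The identity is a Ward identity, so the plan is to produce it from an infinitesimal internal symmetry of the measure $\mu_{a,\Lambda}(u,s)\,du\,ds$. Since the factors $B_e^{-a_e}$ and the prefactor $e^{-\sum_j u_j}$ in \eqref{usmes} are not manifestly invariant under a large symmetry group, the strategy I would follow is the one from \cite{dsz}: work with the \emph{unintegrated} $(W,U,S)$ density \eqref{usold}, which has a genuine supersymmetry (a bosonic-fermionic $OSp$-type symmetry of the $H^{2|2}$ sigma model), apply the symmetry there, and only afterwards integrate out $W$ to land on $\lan\cdot\ran_\Lambda$. Concretely, one exploits the symmetry of the $H^{2|2}$ model that acts as a "horospherical boost" mixing the $u$, $s$ (and the two Grassmann) coordinates; the statement $\lan B_{x,y}^m(1-mD_{x,y})\ran=1$ is exactly what the Ward identity for such a boost, applied to the observable $B_{x,y}^m$, produces — the $mD_{x,y}$ term being the variation of $B_{x,y}^m$ and the remaining $1$ coming from normalization.

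\textbf{Key steps, in order.} First, recall/set up the full $H^{2|2}$-type representation: reintroduce the Gaussian field $S$ and the pair of Grassmann variables $(\psi,\bar\psi)$ on each vertex (integrated out, they reproduce $D[m(w,u)]$), so that the joint density \eqref{usold} becomes $\propto e^{-\sum_j u_j}\,e^{-\sum_e w_e(B_e^{\mathrm{full}}-1)}$ times the Gaussian/Grassmann measure, where $B^{\mathrm{full}}$ now includes the fermionic term. Second, identify the one-parameter family of internal symmetries: the transformation fixing the basepoint $i_0=0$ and acting on the remaining coordinates so that the "$\cosh$-type" quadratic form $B_e$ is preserved; differentiate the invariance of $\int (\text{density})\cdot F$ at the identity to get $\lan \mathcal{L}F\rangle = \lan F\cdot(\text{variation of the density})\rangle$ for any observable $F$, where $\mathcal{L}$ is the generating vector field. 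Third, take $F=B_{x,y}^m$: compute $\mathcal{L}(B_{x,y}^m)$ and the contribution of the non-invariant pieces ($e^{-\sum u_j}$ and the constraint $u_{i_0}=0,\ s_{i_0}=0$); the algebra should collapse, using Ward identities for the Gaussian and Grassmann sectors (a "resistance = variance of the free field" computation), into the single effective-resistance object $D_{x,y}$ of Definition \ref{def:er}, whose conductances $c_{ij}^{x,y}$ are precisely $c_{ij}$ rescaled by $B_{x,y}e^{-u_x-u_y}$. Fourth, integrate out $W$ (each edge contributes $I_e(a_e)=B_e^{-a_e}$ as in the proof of Proposition \ref{newm}), checking the $W$-integration commutes with the symmetry operation because the symmetry preserves each $B_e$, so nothing in the $W$-integral is affected; this turns the unintegrated identity into the stated one for $\lan\cdot\ran_\Lambda$. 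Finally, track the range of validity: the moments $B_e^{m}$, $B_e^{-a_e}$ and the resistance term must be integrable, which is where the hypothesis $m\le a/4$ enters (one needs enough negative powers $B_e^{-a_e}$ to dominate the positive powers $B_{x,y}^m$ produced along the path realizing the resistance, via H\"older).

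\textbf{Main obstacle.} The routine parts are the Gaussian and Grassmann integrations; the delicate part is step three — showing that the variation of the density under the boost, when paired with $B_{x,y}^m$, reorganizes exactly into $-m\,\lan B_{x,y}^m D_{x,y}\rangle$ with the \emph{effective resistance} (an infimum over flows / equivalently a ratio of spanning-tree sums) rather than some less canonical expression. This is the place where one must use the matrix-tree structure of $D[M(u,s)]$ (the Remark after Theorem \ref{meas}): differentiating $\log D[M]$ with respect to the boost parameter yields, by the matrix-tree theorem, a sum over spanning trees weighted so as to reproduce $D_{x,y}$ as a two-point function of the associated random spanning tree / discrete Green's function. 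Getting this identification clean — and making sure the basepoint constraint contributes no spurious boundary term — is the crux; everything else is bookkeeping with Gamma integrals and H\"older's inequality for the integrability range.
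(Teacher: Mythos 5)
Your overall strategy---obtain the identity as a supersymmetric Ward/localization identity in the spirit of \cite{dsz} and only then integrate out $W$---is a legitimate route, and it is essentially how the paper proves the more general protected estimate (Lemma \ref{wardp}); the paper even remarks that Lemma \ref{ward} ``could be deduced from the more general proof of Lemma \ref{wardp}.'' But the paper's own proof of this lemma is deliberately different and more elementary: it adds an auxiliary edge $\tilde e=\{x,y\}$ to $E$ with weight $a_{\tilde e}=-m$, observes that for $m<0$ the resulting density $\tilde\mu_{a,\Lambda}$ is exactly the normalized mixing density of Proposition \ref{newm} for the enlarged graph (so its integral is $1$), establishes integrability for all $m\le a/4$ by bounding $B_{x,y}^{m}$ along a path via Lemma \ref{Bxyz} and absorbing the resulting powers into the $B_e^{-a_e}$ as in Lemma \ref{moments} (this is where $m\le a/4$ enters, roughly as you guessed), and extends the normalization identity to $m\le a/4$ by analyticity in $m$. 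The observable $B_{x,y}^m(1-mD_{x,y})$ then appears identically: the added edge contributes the factor $B_{x,y}^{m}$ to $\prod_e B_e^{-a_e}$, and expanding $\det(\tilde N)$ in the perturbation coming from $\tilde e$ gives, by Cramer's rule, $\det(\tilde N)=\det(N)(1-mD_{x,y})$, the identification of the resulting Green's-function quadratic form with the effective resistance being Lemma \ref{electric}. No symmetry is differentiated anywhere.

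Measured against either route, your sketch has a genuine gap exactly at the step you yourself flag as the ``main obstacle'': you never supply the mechanism by which the variation reorganizes into $-m\lan B_{x,y}^m D_{x,y}\ran$, and the symmetry you invoke is not the right one. The identity does not come from an infinitesimal bosonic ``horospherical boost'' preserving each $B_e$ (the bosonic symmetries $u\mapsto u+t$, $s\mapsto e^{-t}s$ and $s\mapsto s+c$ yield different Ward identities); it comes from the odd supersymmetry via the localization (Berezin) identity $\lan f(S_{x,y})\ran=f(0)$ applied to the \emph{supersymmetric} observable $S_{x,y}$, not to the purely bosonic $B_{x,y}$. The factor $(1-mD_{x,y})$ then arises by expanding $S_{x,y}^{m}=B_{x,y}^{m}\bigl(1+m(S_{x,y}-B_{x,y})/B_{x,y}\bigr)$ (the fermionic correction being nilpotent) and performing the Grassmann integral, which reduces to the same cofactor computation $\det(\tilde N)=\det(N)(1-mD_{x,y})$ plus Lemma \ref{electric}. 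That determinant/effective-resistance identification is the entire content of the lemma; until it is carried out, the proposal does not close, and once one has it in hand the elementary ``negative-weight edge plus analytic continuation'' argument is available and shorter.
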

One of the consequences of Lemma \ref{ward} is that, if the effective resistance $D_{xy}$ is small, 
then we can deduce a good bound on $B_{x,y}^m$. However, there is a positive probability that this 
resistance $D_{xy}$ might be large, so that it would be more useful to show an inequality restricted to the event that $D_{xy}$ is small. 

Although it is not possible to derive such an identity directly, we can define events $\ochi_{xy}$ on which such identities hold, and so that $D_{xy}$ is small on  $\ochi_{xy}$. These events will depend on geometric objects defined in the next Section \ref{geometry}, called diamonds and deformed diamonds.

\subsection{Diamonds and deformed diamonds}
\label{geometry}
\begin{definition}
Let $l\neq 0$ be a vector in $\R^d$, and let $x\in \Z^d$. We denote by $C_x^l$ the cone with base $x$, direction $l$ and angle $\pi/4$,
\beq
C_x^l &=& \{ z\in \R^d, \;\;\; \angle (xz, l) \leq \frac{\pi}{4} \}\\
&=&
\{ z\in \R^d, \;\;\; (z-x)\cdot l \geq \frac{\sqrt 2}{2} | z-x|| l |\}
\eeq
where $\angle (xz, l) $ is the angle between the vector $xz$ and the vector $l$. Recall that $|.|$ is the $L^2$ norm on $\R^d$.

If $x$ and $y$ are in $\Z^d$, we call Diamond the set
$$
\left(C^{y-x}_x\cap C_y^{x-y}\right)\cap \Z^d.
$$
to which we add a few points close to $x$ and close to $y$ so that the set is connected in $\Z^d$. We denote this set by $R_{x,y}$.
\end{definition}
\begin{remark}\label{points}
By the expression "a few points" above we mean that we add some extra points to 
$\left(C^{y-x}_x\cap C_y^{x-y}\right)\cap \Z^d$ at a bounded distance from $x$, $y$ so that the resulting set becomes connected in the lattice.
The distance at which the points can be added is bounded by a constant depending only on the dimension.  Of course, all the estimates below will be independent
of the choice of these points. We will repeat this operation several times
hereafter, without extra explanation.
\end{remark}
In the course of the inductive argument, some deformed diamonds appear, they are formed of the intersection of two cones with smaller angles than
for diamonds. For $l\in \R^d$, $l\neq 0$, and $x\in \R^d$ we set
$$
\tilde C_x^l = \{ z\in \R^d, \;\;\; \angle (xz, l) \leq \frac{\pi}{16} \}.
$$
\begin{definition} \label{deformed}
A deformed diamond is a set of the following form
$$
\left( \tilde C_x^l \cap \tilde C_y^{x-y}\right)\cap \Z^d,
$$
(plus a few points close to $x$ and to $y$ so that the set is connected in $\Z^d$, see Remark \ref{points}) where
$x\in \Z^d$, $l\in \R^d$,  $l\neq 0$  and $y\in \Z^d$ is a point such that
$$
y\in \tilde C^l_x.
$$
We also denote a deformed diamond by $R_{x,y}$ (and it will always be clear in the text wether $R_{x,y}$ is a diamond or deformed diamond).
\end{definition}
\begin{definition}
\label{deformedf}It will be useful to write each (exact or deformed) diamond as a non disjoint union of two sets 
$$
R_{x,y}^x=\{z\in R_{x,y}, \; | z-x | \leq f_x | y-x|\}, \;\;
R_{x,y}^y=\{z\in R_{x,y}, \; | z-y | \leq f_y | y-x|\},
$$
where the pair $(f_x, f_y)$ is such that $ \frac{1}{5} \le f_x \leq 1$, $\frac{1}{5}\leq f_y \leq 1$ and 
$f_x + f_y\geq 1+\frac{1}{5}$.

This condition on $(f_x,f_y)$, and the fact that the maximal angle in deformed diamonds is $\pi/16$, indeed ensures that 
$R_{x,y} =R_{x,y}^x\cup R_{x,y}^y$. Note that the choice of $(f_x,f_y)$ is not unique.
\end{definition}

\subsection{Estimates on effective conductances}

\begin{definition}
Given a diamond or deformed diamond $R_{xy}\subset\La$, let $D_{xy}^N$ be the effective resistance of 
the electrical network with the same conductances $c_{i,j}^{x,y}$ in $R_{xy}$ as in Definition \ref{def:er}, 
and Neumann boundary conditions on $\partial R_{xy}$.
\end{definition}

\begin{definition}\label{defchiij}
Fix $b>1$ and $\al\ge0$. Given $i$, $j$ $\in\La$, let 
$$\chi_{ij}=\indic_{\{B_{ij}\le b|i-j|^{\al}\}}.$$

Given a deformed diamond $R_{xy}\subset\La$ and the two corresponding regions $R_{xy}^x$ and $R_{xy}^y$, let 
$$\ochi_{xy}=\prod_{j\in R_{xy}^x}\chi_{xj}\prod_{j\in R_{xy}^y}\chi_{yj}.$$

Note that $\ochi_{xy}$ implicitly depends on $f_x$ and $f_y$, through $R_{xy}^x$ and $R_{xy}^y$.
\end{definition}
Here is the main proposition of the section. It ensures that under the condition $\ochi_{xy}$, there is a uniform bound on the effective resistance $D^N_{x,y}$.
\begin{proposition}\label{resistance-bound}
Fix $\al\in [0,1/8]$ and $b>1$. There exists a constant $C=\Cst(d,b)$ such that for any deformed diamond 
$R_{x,y}\subset \La$, if $\ochi_{xy}$ is satisfied, then
$$
D^N_{x,y}\le C/a,
$$ 
where $a=\inf (a_{i,j})$ {was introduced in \eqref{adef}.}
\end{proposition}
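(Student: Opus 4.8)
\textbf{Proof plan for Proposition \ref{resistance-bound}.}
The plan is to bound $D^N_{x,y}$ from above by exhibiting a concrete unit flow $\theta$ from $x$ to $y$ inside the deformed diamond $R_{x,y}$ and estimating its (Neumann) energy $\demi\sum_e \theta(e)^2/c^{x,y}_e$. The key point is that on the event $\ochi_{xy}$ every edge of $R_{x,y}^x$ satisfies $B_{xj}\le b|x-j|^\al$ and every edge of $R_{x,y}^y$ satisfies $B_{yj}\le b|y-j|^\al$; since $B_{xy}\ge\cosh(U_x-U_y)\ge1$ and, more usefully, $B$ satisfies an approximate triangle-type inequality (from $B_{xy}\le\Cst B_{xj}B_{jy}$, which follows from $\cosh(u_x-u_y)\le 2\cosh(u_x-u_j)\cosh(u_j-u_y)$ and the definition \eqref{def:bij} of $B$ together with $e^{u_x+u_y}\le\Cst e^{u_x+u_j}e^{u_j+u_y}$ on the relevant scales), one gets pointwise control of the conductance $c^{x,y}_{i,j}=a_{i,j}e^{u_i+u_j-u_x-u_y}B_{x,y}/B_{i,j}$ from below on the whole diamond in terms of powers of the Euclidean distances $|i-x|$, $|j-y|$, independently of $U$ and $S$. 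The factor $1/a$ in the conclusion is exactly the $1/a_{i,j}\le 1/a$ appearing in the resistances $1/c^{x,y}_{i,j}$.

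First I would set up the geometry: write $R_{x,y}=R_{x,y}^x\cup R_{x,y}^y$ as in Definition \ref{deformedf}, and on each half use the chain estimate to show $c^{x,y}_{i,j}\ge \Cst(d,b)\,a_{i,j}\,|i-j|^{-\al}$-type bounds with the distance measured to whichever of $x$, $y$ is the ``near'' endpoint — here the smallness of the cone angle $\pi/16$ is what guarantees that for an edge $\{i,j\}$ in $R_{x,y}^x$ one has $|i-x|,|j-x|$ comparable and bounded by $\Cst|y-x|$, and similarly near $y$, so that the $e^{u}$ factors telescope without losing more than a constant. Next I would build the flow: on $R_{x,y}^x$ route a unit flow from $x$ out to the ``equatorial'' region $R_{x,y}^x\cap R_{x,y}^y$, and symmetrically on $R_{x,y}^y$ route it into $y$; because the number of vertices of $R_{x,y}$ at Euclidean distance $\sim r$ from $x$ grows like $r^{d-1}$ and $d\ge3$, one can spread the flow so that the flow through an edge at distance $r$ from $x$ is $O(r^{-(d-1)})$, making $\sum_e \theta(e)^2/c^{x,y}_e$ comparable to $\sum_{r} r^{d-1}\cdot r^{-2(d-1)}\cdot r^{\al}/a$, a convergent sum since $\al\le1/8<d-2$. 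The standard construction is the radial/"cone" flow used in transience proofs on $\Z^d$; one just has to check it stays inside the deformed diamond, which the cone definition ensures.

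The main obstacle I anticipate is the bookkeeping of the $e^{u_i+u_j-u_x-u_y}B_{x,y}/B_{i,j}$ factor: one must show that along a radial chain $x=v_0\sim v_1\sim\cdots\sim v_k=$ (equator) the product of the edge conductances is bounded below by a $U$-independent quantity, which requires iterating $B_{x,v_k}\le \Cst^k\prod B_{v_{l-1}v_l}$ and controlling the resulting $\Cst^k$ against the polynomial growth — this is fine because $k\sim\log|y-x|$ only enters logarithmically while the constraint $B_{xj}\le b|x-j|^\al$ is polynomial, but it must be done carefully, and it is the place where $\al\le1/8$ (rather than $\al<d-2$) and $b>1$ enter. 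A secondary technical point is handling the ``few extra points'' near $x$ and $y$ (Remark \ref{points}): since there are only $\Cst(d)$ of them at bounded distance, they contribute at most $\Cst(d)/a$ to the resistance and can be absorbed. Combining the flow energy bound over both halves via $R_{x,y}=R_{x,y}^x\cup R_{x,y}^y$ (using that an electrical network is only made better by gluing) yields $D^N_{x,y}\le C(d,b)/a$, as claimed.
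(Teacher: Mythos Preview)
Your overall plan is the same as the paper's: exhibit a spread-out unit flow $\theta$ in $R_{x,y}$ with $|\theta(e)|\lesssim r^{-(d-1)}$ at distance $r$ from the nearer endpoint, bound the edge resistances $1/c^{x,y}_{i,j}$ pointwise on the event $\ochi_{xy}$, and sum. The paper carries this out by averaging straight-line paths through an equatorial slice (giving exactly the $r^{-(d-1)}$ spreading you describe) and then invoking the pointwise conductance bound of Lemma~\ref{estcond}.

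The gap is in your ``main obstacle'' paragraph, where you propose to control the $e^{u_i+u_j-u_x-u_y}B_{x,y}/B_{i,j}$ factor by iterating along a nearest-neighbor chain $x=v_0\sim\cdots\sim v_k$. Two things go wrong. First, such a chain has $k\sim|i-x|$, not $k\sim\log|y-x|$; any argument that produces a factor $\Cst^k$ is therefore exponential in $|y-x|$ and cannot be absorbed by the polynomial constraint $B_{xj}\le b|x-j|^\alpha$. Second, $\ochi_{xy}$ gives you control only of the ``radial'' quantities $B_{xj}$ (and $B_{yj}$), not of the nearest-neighbor $B_{v_{l-1}v_l}$; recovering the latter via $B_{v_{l-1}v_l}\le 2B_{xv_{l-1}}B_{xv_l}$ and then telescoping $e^{u_i-u_x}=\prod_l e^{u_{v_l}-u_{v_{l-1}}}\ge\prod_l(2B_{v_{l-1}v_l})^{-1}$ still leaves a product of $k$ polynomial terms, i.e.\ something like $(k!)^{-2\alpha}(4b^2)^{-k}$, which is useless.

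The correct bound is a one-step algebraic estimate on \emph{each} edge separately, with no iteration: from $e^{\pm(u_x-u_y)}B_{x,y}\ge\tfrac12$ one gets $e^{u_i+u_j-u_x-u_y}B_{x,y}\ge\tfrac12 e^{u_i+u_j-2u_x}$, then $e^{u_i-u_x}\ge 1/(2\cosh(u_i-u_x))\ge 1/(2B_{ix})$ and $B_{ij}\le 2B_{ix}B_{jx}$ give
\[
\bigl(c^{x,y}_{i,j}/a_{i,j}\bigr)^{-1}\le 16\,B_{ix}^2B_{jx}^2\le \Cst\,b^4\,|i-x|^{4\alpha}
\]
for $\{i,j\}\subset R_{x,y}^x$ (and symmetrically near $y$). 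This is exactly Lemma~\ref{estcond}. Note the exponent is $4\alpha$, not $\alpha$ as you wrote; the energy sum is then $\sum_r r^{d-1}\cdot r^{-2(d-1)}\cdot r^{4\alpha}/a$, convergent since $4\alpha\le 1/2<d-2$ for $d\ge3$, and this is precisely where $\alpha\le 1/8$ is used. With this fix your argument goes through and coincides with the paper's.
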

\begin{remark} Note that the constant $C$ is independent of the precise shape of the deformed diamond. 
\end{remark}

Proposition \ref{resistance-bound} is proved in Section \ref{sec:res}. It partly relies on the following two Lemmas \ref{Bxyz} and \ref{estcond}, which will also be useful in other parts of the proof.
\begin{lemma}[lemma 2 of \cite{dsz}]
\label{Bxyz}
For all $x$, $y$, $z$ $\in \La$, 
$$B_{xz}\le2 B_{xy}B_{yz}.$$
\end{lemma}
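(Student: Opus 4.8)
\textbf{Proof proposal for Lemma \ref{Bxyz} ($B_{xz}\le 2B_{xy}B_{yz}$).}

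The plan is to prove the inequality directly from the definition \eqref{def:bij} by reducing it to two elementary scalar estimates, one for the $\cosh$ part and one for the quadratic part. Write $\xi=U_x-U_y$, $\eta=U_y-U_z$, so that $U_x-U_z=\xi+\eta$, and abbreviate $p=S_x-S_y$, $q=S_y-S_z$, so that $S_x-S_z=p+q$. Then
\[
B_{xz}=\cosh(\xi+\eta)+\tfrac12 e^{U_x+U_z}(p+q)^2,
\]
while $B_{xy}=\cosh\xi+\tfrac12 e^{U_x+U_y}p^2$ and $B_{yz}=\cosh\eta+\tfrac12 e^{U_y+U_z}q^2$. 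Both factors on the right are $\ge1$ (indeed $\ge\cosh\xi\ge1$ and $\ge\cosh\eta\ge1$ respectively), which will be used repeatedly to absorb cross terms.

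First I would handle the hyperbolic part. From the addition formula, $\cosh(\xi+\eta)=\cosh\xi\cosh\eta+\sinh\xi\sinh\eta\le 2\cosh\xi\cosh\eta$, using $|\sinh\xi\sinh\eta|\le\cosh\xi\cosh\eta$; this already gives $\cosh(\xi+\eta)\le 2B_{xy}B_{yz}$. Next, for the quadratic part I would use $(p+q)^2\le 2p^2+2q^2$ and then distribute the exponential weight: $e^{U_x+U_z}=e^{U_x+U_y}e^{U_z-U_y}\le e^{U_x+U_y}(2\cosh\eta)$, since $e^{U_z-U_y}=e^{-\eta}\le 2\cosh\eta$, and symmetrically $e^{U_x+U_z}\le e^{U_y+U_z}(2\cosh\xi)$. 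Hence
\[
\tfrac12 e^{U_x+U_z}(p+q)^2\le e^{U_x+U_y}p^2\cdot 2\cosh\eta+e^{U_y+U_z}q^2\cdot 2\cosh\xi
\le 2\,(2B_{xy})B_{yz}+2\,B_{xy}(2B_{yz}),
\]
where in the last step I bounded $e^{U_x+U_y}p^2\le 2B_{xy}$, $\cosh\eta\le B_{yz}$, etc. This is far from tight, so the actual bookkeeping needs to be done more carefully to land the constant $2$: I would instead keep the split $\tfrac12 e^{U_x+U_z}(p+q)^2\le e^{U_x+U_z}p^2+e^{U_x+U_z}q^2$, bound the first term by $e^{U_x+U_y}p^2\cdot e^{-\eta}$ and the second by $e^{U_y+U_z}q^2\cdot e^{\xi}$, and then recombine with the $\cosh$ estimate. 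Adding the hyperbolic estimate $\cosh(\xi+\eta)\le\cosh\xi\cosh\eta+\sinh\xi\sinh\eta$ to these, the target $2B_{xy}B_{yz}=2\cosh\xi\cosh\eta+e^{U_x+U_y}p^2\cosh\eta+e^{U_y+U_z}q^2\cosh\xi+\tfrac12 e^{U_x+U_z}p^2q^2$ contains a spare term $2\cosh\xi\cosh\eta-\sinh\xi\sinh\eta\cdot(\text{sign})\ge\cosh\xi\cosh\eta\ge\max(e^{-\eta},e^{\xi})/2\cdot(\cdots)$ large enough to absorb the deficits $e^{U_x+U_y}p^2(e^{-\eta}-\cosh\eta)$ and $e^{U_y+U_z}q^2(e^{\xi}-\cosh\eta)$, which are bounded using $e^{-\eta}-\cosh\eta=-\tfrac12(e^{-\eta}-e^{\eta})\le e^{\eta}/2\le\cosh\eta$ and similarly for the other. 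The $\tfrac12 e^{U_x+U_z}p^2q^2$ term on the right is nonnegative and simply discarded.

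The only real obstacle is the constant-chasing: one must verify that all the cross-term losses genuinely fit inside the surplus $2\cosh\xi\cosh\eta$ (and the positive term $\tfrac12 e^{U_x+U_z}p^2q^2$) rather than needing a worse constant than $2$. I expect this to work because every loss carries at least one factor of $\cosh\xi$ or $\cosh\eta$ after the splitting $e^{\pm\eta}\le 2\cosh\eta$, matching the structure of the surplus; a clean way to organize it is to treat separately the two sign cases of $\sinh\xi\sinh\eta$ and, within each, group terms so that the inequality becomes a sum of manifestly nonnegative quantities. Since this is exactly Lemma 2 of \cite{dsz}, I would also simply cite that reference and reproduce its short computation.
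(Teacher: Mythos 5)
In your notation ($\xi=U_x-U_y$, $\eta=U_y-U_z$, $p=S_x-S_y$, $q=S_y-S_z$), the step $(p+q)^2\le 2p^2+2q^2$ is where your argument breaks, and the problem is not constant-chasing but scaling. After that step you are committed to the intermediate inequality $\cosh(\xi+\eta)+e^{U_x+U_z}(p^2+q^2)\le 2B_{xy}B_{yz}$, and this is false: take $S_y=S_z$ (so $q=0$), $U_y=0$, $\xi=\eta=-1$, $p=10$; then the left side is $\cosh 2+100\approx 103.8$, while $2B_{xy}B_{yz}=2\bigl(\cosh 1+\tfrac12 e^{-1}\cdot 100\bigr)\cosh 1\approx 61.5$ (the lemma itself holds there, since $B_{xz}=\cosh 2+50\approx 53.8$). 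The deficit you would have to absorb is $e^{U_x+U_y}p^2(e^{-\eta}-\cosh\eta)=-e^{U_x+U_y}p^2\sinh\eta$, which grows like $p^2$, whereas your ``surplus'' $2\cosh\xi\cosh\eta-\cosh(\xi+\eta)=\cosh(\xi-\eta)$ carries no factor of $p^2$, and the only other available term, the quartic one in $2B_{xy}B_{yz}$ (which is $\tfrac12 e^{U_x+2U_y+U_z}p^2q^2$, note the $e^{2U_y}$ missing in your expansion), vanishes when $q=0$. So no regrouping into nonnegative pieces can rescue this route; the factor $2$ lost on the diagonal terms is exactly the factor you cannot afford.

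The elementary computation works if you keep the cross term instead of discarding structure. Write $(p+q)^2=p^2+2pq+q^2$ and bound the three pieces separately: $\tfrac12 e^{U_x+U_z}p^2=\tfrac12 e^{U_x+U_y}e^{-\eta}p^2\le e^{U_x+U_y}p^2\cosh\eta$ and $\tfrac12 e^{U_x+U_z}q^2\le e^{U_y+U_z}q^2\cosh\xi$ (here $e^{\mp t}\le 2\cosh t$ supplies precisely the factor $2$ that is available, whereas your route needed $e^{-\eta}\le\cosh\eta$, false for $\eta<0$); these match exactly the two mixed terms in the expansion of $2B_{xy}B_{yz}$. For the hyperbolic part use the identity $\cosh(\xi+\eta)=2\cosh\xi\cosh\eta-\cosh(\xi-\eta)$ rather than the one-sided bound. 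Finally absorb the cross term by AM--GM against the two terms you threw away: $e^{U_x+U_z}|pq|\le\cosh(\xi-\eta)+\tfrac12 e^{U_x+2U_y+U_z}p^2q^2$, because the right side is at least $\sqrt{2\cosh(\xi-\eta)}\,e^{U_y+(U_x+U_z)/2}|pq|$ and $2\cosh(\xi-\eta)\ge e^{\xi-\eta}=e^{U_x+U_z-2U_y}$. Summing the four bounds gives $B_{xz}\le 2B_{xy}B_{yz}$ exactly; this is the short computation behind Lemma 2 of \cite{dsz}, which the paper cites and dismisses as elementary.
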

\begin{proof}
Elementary computation.
\end{proof}
\begin{lemma}
\label{estcond}
For all $i$, $j$ $\in\Lambda $, $i\sim j$, $x$, $y$ $\in\Lambda $, we have
\begin{equation}
\label{condup}
(c_{ij}^{xy}/a)^{-1}\le16(B_{iz})^2(B_{jz})^2,
\end{equation}
for both $z=x$ and $z=y$.

In particular, let $c=b^{-4}/64$, $\be=4\al$. Assume that $\ochi_{xy}$ holds, then the electrical 
network with conductances
\begin{equation}\label{gammadef}
(\gamma_{i,j})_{i,j\in\La,\,i\sim j}=(c_{ij}^{xy}/a)_{i,j\in\La,\,i\sim j},
\end{equation}
satisfies
$$
\g_{i,j}\ge c|i-z|^{-\be}
$$
for all $i$, $j$ $\in R_{xy}^z$, $i\sim j$, for $z=x,y$,.
\end{lemma}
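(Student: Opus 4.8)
## Proof Proposal for Lemma \ref{estcond}

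\textbf{Approach.} The plan is to establish the pointwise bound \eqref{condup} by a direct estimate using Lemma \ref{Bxyz}, then to combine it with the geometry of the (deformed) diamond regions $R_{xy}^z$ and the definition of $\ochi_{xy}$ to extract the uniform lower bound on the conductances $\gamma_{i,j}$.

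\textbf{Step 1: the pointwise bound \eqref{condup}.} Recall from Definition \ref{def:er} that
$$c_{ij}^{xy}=\frac{a_{i,j}\,e^{u_i+u_j-u_x-u_y}B_{x,y}}{B_{i,j}},$$
so that
$$(c_{ij}^{xy}/a)^{-1}=\frac{a}{a_{i,j}}\cdot\frac{B_{i,j}}{B_{x,y}}\,e^{u_x+u_y-u_i-u_j}\le\frac{B_{i,j}}{B_{x,y}}\,e^{u_x+u_y-u_i-u_j},$$
using $a\le a_{i,j}$. Fix $z\in\{x,y\}$. I would bound $e^{u_x+u_y-u_i-u_j}=e^{(u_x-u_i)}e^{(u_y-u_j)}$ (or the analogous splitting pairing $z$ with the appropriate endpoint) using the elementary inequality $e^{|u_a-u_b|}\le2\cosh(u_a-u_b)\le2B_{ab}$, which follows since $B_{ab}\ge\cosh(u_a-u_b)$ from \eqref{def:bij}. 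This gives, after pairing, $e^{u_x+u_y-u_i-u_j}\le 4B_{iz}B_{jz}\cdot(\text{a factor involving }B_{xz}\text{ or }B_{yz})$; the cleanest route is to first write $B_{x,y}^{-1}\le 1$ is false in general, so instead I would use Lemma \ref{Bxyz} in the form $B_{ij}\le 2B_{iz}B_{zj}$ to replace $B_{i,j}$, and bound the exponential factors directly by $2B_{iz}$ and $2B_{jz}$ as above, while $B_{x,y}\ge \frac12 B_{x,z}^{-1}B_{x,y}\ge\cdots$; more simply, since $B_{x,y}\ge\cosh(0)=1$ is \emph{not} automatic, I note $B_{x,y}\ge 1$ \emph{does} hold because $\cosh\ge1$ and the second term in \eqref{def:bij} is nonnegative. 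Hence $B_{x,y}^{-1}\le1$, and combining $B_{i,j}\le 2B_{iz}B_{jz}$ with $e^{u_x+u_y-u_i-u_j}\le(2B_{iz})(2B_{jz})$ yields $(c_{ij}^{xy}/a)^{-1}\le 2B_{iz}B_{jz}\cdot4B_{iz}B_{jz}=8(B_{iz})^2(B_{jz})^2$, which is even stronger than the claimed $16(B_{iz})^2(B_{jz})^2$; keeping the weaker constant $16$ leaves slack for whichever endpoint pairing is used.

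\textbf{Step 2: from \eqref{condup} to the lower bound on $\gamma_{i,j}$.} Suppose $\ochi_{xy}$ holds and fix $z\in\{x,y\}$ and an edge $\{i,j\}\subset R_{xy}^z$ with $i\sim j$. By Definition \ref{defchiij}, $\ochi_{xy}$ forces $\chi_{zi}=\chi_{zj}=1$, i.e. $B_{iz}\le b|i-z|^{\al}$ and $B_{jz}\le b|j-z|^{\al}$. Since $i\sim j$, $|j-z|\le|i-z|+1$; on the diamond regions $|i-z|$ is comparable to $|i-z|+1$ up to a bounded factor (points at distance $O(1)$ are handled by the "few extra points" convention of Remark \ref{points}, and for the bulk $|i-z|\ge1$ so $|j-z|\le 2|i-z|$), hence $B_{jz}\le b\,(2|i-z|)^{\al}\le b\cdot 2^{\al}|i-z|^{\al}$. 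Plugging into \eqref{condup}:
$$(\gamma_{i,j})^{-1}=(c_{ij}^{xy}/a)^{-1}\le16(b|i-z|^{\al})^2(b\,2^{\al}|i-z|^{\al})^2=16\,b^4\,2^{2\al}|i-z|^{4\al}\le 64\,b^4\,|i-z|^{4\al}$$
for $\al\le1/2$, so that $\gamma_{i,j}\ge (64 b^4)^{-1}|i-z|^{-4\al}=c\,|i-z|^{-\be}$ with $c=b^{-4}/64$ and $\be=4\al$, as claimed.

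\textbf{Main obstacle.} The only genuinely delicate point is the bookkeeping in Step 1: choosing the right pairing of $\{x,y\}$-exponentials with $\{i,j\}$ so that, for the \emph{fixed} choice $z=x$ (and separately $z=y$), one really does land on $(B_{iz})^2(B_{jz})^2$ rather than a mixed product $B_{ix}B_{jy}$. This is where Lemma \ref{Bxyz} is used to "move" the remaining endpoint onto $z$, absorbing the factor $2$ into the constant; verifying that $16$ is large enough to cover either choice of $z$ is routine but must be done carefully. The geometric comparison $|j-z|\asymp|i-z|$ in Step 2 is standard and harmless given Remark \ref{points}.
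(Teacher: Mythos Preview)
Your Step 2 is fine and matches the paper. The gap is in Step 1, at the line where you assert
\[
e^{u_x+u_y-u_i-u_j}\le(2B_{iz})(2B_{jz})
\]
for a \emph{fixed} $z\in\{x,y\}$. Take $z=x$, set $u_x=u_i=u_j=0$, all $s$-variables equal to $0$, and let $u_y\to\infty$: the left side is $e^{u_y}$, while $B_{ix}=B_{jx}=1$, so the right side is $4$. The inequality fails. The point is that after you discard $B_{xy}^{-1}$ via $B_{xy}\ge1$, nothing in your remaining expression controls $u_y$, so you cannot land on a bound involving only $B_{ix},B_{jx}$. Your ``Main obstacle'' paragraph gestures at using Lemma~\ref{Bxyz} to move $B_{jy}$ onto $B_{jx}$, but that produces $B_{jy}\le 2B_{jx}B_{xy}$, i.e.\ an extra factor $B_{xy}$, not an extra factor $2$---and you have already thrown away the $B_{xy}^{-1}$ that would cancel it.

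The fix is precisely to \emph{keep} $B_{xy}$ and use it to absorb $u_y$. The paper does this via the elementary bound $e^{\pm(u_x-u_y)}B_{xy}\ge e^{\pm(u_x-u_y)}\cosh(u_x-u_y)\ge\tfrac12$, which gives
\[
e^{u_i+u_j-u_x-u_y}B_{xy}\;\ge\;\tfrac12\,e^{u_i+u_j-2u_x},
\]
eliminating $u_y$ entirely. Then $e^{u_i+u_j-2u_x}\ge\tfrac14(B_{ix}B_{jx})^{-1}$ (from $e^{-t}\ge\tfrac12(\cosh t)^{-1}$) and $B_{ij}\le2B_{ix}B_{jx}$ combine to give the factor $16$. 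Equivalently, you could retain $B_{xy}^{-1}$, bound $e^{u_y-u_j}\le2B_{jy}\le4B_{jx}B_{xy}$, and let the $B_{xy}$ cancel; either way the essential step you are missing is that $B_{xy}$ is what converts the mixed $(x,y)$-exponent into a pure $z$-exponent.
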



{\it Proof of Lemma \ref{estcond}. }
First note that 
$$e^{\pm(u_x-u_y)}B_{xy}\ge e^{\pm(u_x-u_y)}\cosh(u_x-u_y)\ge1/2,$$
so that 
\begin{equation}
\label{in1}
e^{u_i+u_j-u_x-u_y}B_{x,y}\ge\frac{1}{2}\max[e^{u_i+u_j-2u_x},e^{u_i+u_j-2u_y}].
\end{equation}
Now
\begin{equation}
\label{in2}
e^{u_i+u_j-2u_x}=e^{u_i-u_x}e^{u_j-u_x}\ge\frac{1}{4}(\cosh(u_i-u_x)\cosh(u_j-u_x))^{-1}\ge\frac{1}{4}(B_{ix}B_{jx})^{-1}.
\end{equation}

On the other hand, note that
\begin{equation}
\label{in3}
B_{ij}\le2B_{ix}B_{jx}.
\end{equation}
Inequalities \eqref{in1}--\eqref{in3} together yield the first part. The second part is a consequence of $B_{ix}\le b|i-x|^\al$ and $B_{jx}\le b|j-x|^\al\le2b|i-x|^\al$.
\cqfd

\subsection{Protected Ward estimates}

Next, we obtain a ``protected'' Ward estimate, as follows. 
\begin{lemma}
\label{wardp}Let $\al\ge0$ and $b>1$.
For all $i=1,\ldots, n$, let $m_i\leq a/4$ {(with $a$ given in \eqref{adef})} 
and let  $R_{x_iy_i}$ be regions whose interiors are disjoint. Then
\begin{equation}
\label{protect1}
\lan \prod_{i=1}^n B_{x_iy_i}^{m_i} (1-m_iD_{x_iy_i}^N)\ran\le 1, 
\end{equation}
and
\begin{equation}
\label{protect2}
\lan \prod_{i=1}^n B_{x_iy_i}^{m_i}\ochi_{x_iy_i} (1-m_iD_{x_iy_i}^N)\ran\le 1.
\end{equation}

Inequality \eqref{protect2} implies, by  Proposition \ref{estcond}, that if, additionally, the regions $R_{x_iy_i}$, $i=1,\ldots, n$ are deformed diamonds and $m_i<a/C$ for all $i$, then 
\begin{equation}
\label{protect3}
\lan\prod_{j=1}^n B_{x_jy_j}^{m_j}\ochi_{x_jy_j}\ran\le\prod_{j=1}^n(1-m_jC/a)^{{ -1}},
\end{equation}
where $C$ is the constant considered in Proposition \ref{resistance-bound}.
\end{lemma}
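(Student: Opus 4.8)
\textbf{Proof plan for Lemma \ref{wardp}.}
The plan is to obtain \eqref{protect1} by iterating the single-edge Ward identity of Lemma \ref{ward} in the spirit of a conditional-expectation / tower argument, exploiting the disjointness of the interiors of the regions $R_{x_iy_i}$. The key point is that the Ward identity of Lemma \ref{ward} is generated by an internal symmetry of the measure $\mu_{a,\La}$ (the hyperbolic rotation acting on the pair $(U_x,S_x)$ relative to a basepoint), and such symmetries can be localised: one performs the infinitesimal change of variables associated with the pair $(x_i,y_i)$ supported in the region $R_{x_iy_i}$, so that it only affects the variables inside that region. Since the interiors of the $R_{x_iy_i}$ are pairwise disjoint, these $n$ deformations commute and can be applied successively; each application turns a factor $B_{x_iy_i}^{m_i}$ into $B_{x_iy_i}^{m_i}(1-m_iD^N_{x_iy_i})$ while leaving the other factors and the rest of the integrand invariant, and the Neumann boundary condition on $\partial R_{x_iy_i}$ is exactly what makes the localised symmetry exact inside the region. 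Carrying this out (essentially repeating the proof of Lemma \ref{ward}, which is deferred to Section \ref{ward-resist}, but restricted to each region in turn) yields $\lan \prod_i B_{x_iy_i}^{m_i}(1-m_iD^N_{x_iy_i})\ran = 1$, hence in particular the inequality \eqref{protect1}; alternatively, one gets \eqref{protect1} directly by induction on $n$, bounding at each step using $m_i\le a/4$ and positivity.

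For \eqref{protect2}, the idea is that the indicator $\ochi_{x_iy_i}$ is a function only of the variables $B_{x_ij}$, $B_{y_ij}$ for $j$ in $R^{x_i}_{x_iy_i}$, $R^{y_i}_{x_iy_i}$, i.e. of variables living inside the closed region $R_{x_iy_i}$, so it is \emph{invariant} under the localised hyperbolic symmetry that we use to produce the $i$-th Ward factor — here one must use that the symmetry attached to the pair $(x_i,y_i)$ fixes the ``shape'' quantities $B_{x_ij}$ up to the scaling that is already accounted for, or more precisely that $\ochi_{x_iy_i}$ can be written in terms of quantities preserved by the relevant one-parameter group. Granting this invariance, one can insert the product $\prod_i \ochi_{x_iy_i}$ as a spectator factor and repeat verbatim the iteration used for \eqref{protect1}, obtaining $\lan\prod_i B_{x_iy_i}^{m_i}\ochi_{x_iy_i}(1-m_iD^N_{x_iy_i})\ran \le 1$ (equality, in fact, if one is careful, but the inequality suffices).

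Finally, \eqref{protect3} follows by combining \eqref{protect2} with Proposition \ref{resistance-bound}. When the $R_{x_iy_i}$ are deformed diamonds and $\ochi_{x_iy_i}$ holds, Proposition \ref{resistance-bound} gives $D^N_{x_iy_i}\le C/a$ pointwise on the event $\ochi_{x_iy_i}$, hence $1-m_iD^N_{x_iy_i}\ge 1-m_iC/a>0$ when $m_i<a/C$. Therefore on the event where all the $\ochi_{x_iy_i}$ hold,
$$\prod_{i=1}^n B_{x_iy_i}^{m_i}\ochi_{x_iy_i}\le \prod_{i=1}^n (1-m_iC/a)^{-1}\,\prod_{i=1}^n B_{x_iy_i}^{m_i}\ochi_{x_iy_i}(1-m_iD^N_{x_iy_i}),$$
and taking $\lan\cdot\ran$ and using \eqref{protect2} yields \eqref{protect3}. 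The main obstacle I anticipate is the \emph{localisation} step: verifying rigorously that the Ward identity of Lemma \ref{ward} admits a version supported in a single region $R_{x_iy_i}$ with Neumann boundary conditions (so that $D_{x,y}$ is replaced by $D^N_{x,y}$), and that the cutoff $\ochi_{x_iy_i}$ and the other factors are genuinely invariant under the corresponding change of variables — this is where the disjoint-interior hypothesis and the precise definition of the localised symmetry do all the work, and it is presumably the content of the proof given in Section \ref{ward-resist}.
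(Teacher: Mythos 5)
Your derivation of \eqref{protect3} from \eqref{protect2} and Proposition \ref{resistance-bound} is exactly the paper's argument and is fine. The two main inequalities, however, rest on a mechanism that does not exist in the form you describe, and this is where the real content of the lemma lies.

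For \eqref{protect1}: there is no exact ``localised'' Ward identity, with Neumann boundary conditions making the symmetry ``exact inside the region'', that would give $\lan\prod_i B_{x_iy_i}^{m_i}(1-m_iD^N_{x_iy_i})\ran=1$. The exact identity (Lemma \ref{ward}) produces the \emph{global} effective resistance $D_{x,y}$ of the whole network $\Lambda$, and only an inequality survives when $D$ is replaced by $D^N$. What the paper actually does is: (i) add all $n$ auxiliary edges $\{x_i,y_i\}$ to the graph simultaneously, each with negative weight $-m_i$, and prove the exact identity $\lan\prod_i S_{x_iy_i}^{m_i}\ran=1$ for the supersymmetric observables $S_{x_iy_i}$ by analytic continuation in the $m_i$ from the regime where the perturbed density is itself a probability density (integrability being controlled by choosing non-overlapping paths $\gamma_i$, which is where disjointness first enters); (ii) invoke the deterministic determinant inequality $D[\tilde M_a(u,s)]\ge D[M_a(u,s)]\prod_j(1-m_jD^N_{x_jy_j})$ of \cite[lemma 7]{dsz}, valid precisely because the interiors of the $R_{x_iy_i}$ are disjoint. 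The Neumann resistance appears through this pointwise lower bound on a ratio of determinants (a Rayleigh-monotonicity statement), not through a boundary condition under which a change of variables becomes exact. Your alternative ``induction on $n$, dropping factors using positivity'' also fails, because $1-m_iD^N_{x_iy_i}$ is not pointwise nonnegative.

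For \eqref{protect2} the gap is more serious: the cutoff $\ochi_{x_iy_i}$ is a product of indicators $\indic_{\{B_{x_ij}\le b|x_i-j|^{\alpha}\}}$, and the quantities $B_{x_ij}$ are \emph{not} invariant under the symmetry generating the Ward identity, so the cutoff cannot be inserted as a spectator; the paper explicitly warns that ``it is not possible to derive such an identity directly''. The actual argument approximates $\chi_{xy}$ by smooth \emph{decreasing} functions $\chi_\delta$, applies the supersymmetric (Berezin) identity to $S_{xy}^m\chi_\delta(S_{xy})$, and observes that after fermionic integration the cutoff contributes an additional effective conductance $-\chi_\delta'(B_{xy})/\chi_\delta(B_{xy})\ge 0$ on the auxiliary edge; the sign of $\chi_\delta'$ is exactly what makes the resulting determinant bound go the right way, yielding $\lan B_{xy}^m\chi_\delta(B_{xy})(1-mD_{xy})\ran\le 1$. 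This monotonicity mechanism is the key idea of \eqref{protect2} and is absent from your proposal.
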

Lemma \ref{ward} is proved in Section \ref{ward-resist}. The rest of the proof is similar to the argument in \cite{dsz}, which consists in deducing upper bounds of $\lan\prod_{j=1}^n B_{x_jy_j}^m\ran$ for some fixed $m$ from these ``protected'' estimates, through an induction on the maximal length $|x_i-y_i|$ on the protected estimates, $i=1$ $\ldots$ $n$, using Chebyshev inequalities in order to deal with the ``unprotected'' parts of the estimates. We summarise the argument in Section \ref{induction}.

\section{Estimates on good points}
\label{good}
\begin{definition}
\label{def:good}
A point $x\in \Lambda$ is called $n$-good if
$$
B_{x,y}\le b \vert x-y\vert^\alpha,
$$
for all $y\in \Lambda $ with distance $1\le \vert x -y \vert \le 4^n$ from $x$.

Given $z\in\La$, let $R_n(z)$ (denoted by $R_n$ when there is no ambiguity) the  hypercube with side $4^n$ and barycenter $z$. We denote by $\chi^c_{R_n}$ the indicator of the event that there is no $n$-good point in $R_n$.
\end{definition}

In this section we present an estimate on the event $\chi_{R_n}^c$, more precisely we bound from above the indicator function by a sum of terms involving 
$\frac{B_{i,j}}{b\vert j-i\vert^\alpha}$ for points $i,j$ at distance at most $4^n$. 
This will be used in the main inductive argument in Section \ref{induction}. It follows from lemma
9 of \cite{dsz}, but, for convenience of the reader, we give the proof which is only a few lines long.

If $R_n$ is a hypercube of side $4^n$ then it is the disjoint union of $4^d$ sub-hypercubes of side $4^{n-1}$. We can select the $2^d$ corner subcubes
that we denote $(R_n^{i}, \; i=1, \ldots , 2^d)$ so that $d(R_n^i, R_n^j)\ge2 \times 4^{n-1}$ for $i\neq j$. Repeating this procedure hierarchically we can construct a family of
hypercubes $(R_n^v)$ with side $4^{n-k}$ for $v$ running on the set $\{1, \ldots, 2^d\}^k$. We consider the natural structure of rooted $(2^d+1)$-regular tree on the set
$$
\rrr=\{root\}\cup\left(\cup_{k=1}^n \{1, \ldots, 2^d\}^k\right),
$$
where "root" is the root of the tree, corresponding the cube $R_n(=R_n^{root})$. We denote by $d_v$ the depth of a point $v\in \rrr$
with $d_{root}=0$ and $d_v=k$ if $v\in \{1, \ldots, 2^d\}^k$. 
We denote by $\ttt_n$ the set of connected subtrees $T$ of $\rrr$ containing the root and which have the property that any vertex $x\in T$ have
either 0 or $2d$ descendants in $T$. We denote by $L_T$ the set of leaves of such a tree $T\in \ttt_n$.  (Remark that the set $\{L_T, T\in \ttt_n\}$ is also the set of
maximal totally unordered subsets of $\rrr$ for the natural "genealogical order" on $\rrr$.)

For an element $v\in \rrr$ with  $k=d_v$ we set
$$
S^c_{R_n^v}=\mathop{ \sum_{x\in R_n^v, \; y\in \Lambda}}_{ 4^{n-k-1}<\vert x-y\vert <4^{n-k}} \chi_{xy}^c\le 
\mathop{\sum_{x\in R_n^v, \; y\in \Lambda}}_{ 4^{n-k-1}<\vert x-y\vert <4^{n-k} } \frac{B_{x,y}^m}{b^m\vert x-y\vert^{\alpha m}},
$$
{ where $\chi_{xy}^c=1-\chi_{xy}$.}
\begin{lemma}(lemma 9 of \cite{dsz}) \label{Rn}
With the notations above we have
$$
\chi_{R_n}^c \le \sum_{T\in \ttt_n} \prod_{v\in L_T} S^c_{R_n^v}.
$$
\end{lemma}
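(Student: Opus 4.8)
The plan is to prove Lemma \ref{Rn} by a recursive (in the depth of the subcube tree) unfolding of the event ``no $n$-good point in $R_n$''. First I would reformulate the event: by Definition \ref{def:good}, a point $x\in R_n$ fails to be $n$-good precisely when there exists $y\in\La$ with $1\le |x-y|\le 4^n$ and $B_{x,y}>b|x-y|^\al$, i.e. $\chi_{xy}^c=1$. So $\chi_{R_n}^c=\prod_{x\in R_n}\chi^c_{x}$ where $\chi^c_x$ is the indicator that $x$ is not $n$-good. The key geometric observation is that if \emph{every} point of $R_n$ is non-good, then in particular every point of each corner subcube $R_n^i$, $i=1,\dots,2^d$, is non-good; and since $d(R_n^i,R_n^j)\ge 2\cdot 4^{n-1}$ for $i\ne j$, the ``bad witnesses'' $y$ attached to different corner subcubes can be chosen in disjoint annular shells, which is what makes the bound factorize over the $2^d$ children. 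This is exactly the branching structure encoded by $\ttt_n$.

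The main step is a downward induction along the tree $\rrr$. For $v\in\rrr$ at depth $k=d_v$, I would show that the indicator of ``no point of $R_n^v$ is good at scales up to $4^{n-k}$'' is bounded by $S^c_{R_n^v} + \prod_{i=1}^{2^d} (\text{same indicator for the } 2^d \text{ corner subcubes of } R_n^v \text{ at depth } k+1)$. The argument: fix any point $x^\ast\in R_n^v$; it is non-good, so either it has a bad witness $y$ at some scale $4^{n-k-1}<|x^\ast-y|<4^{n-k}$ — then $\chi^c_{x^\ast y}=1$ and the indicator is dominated by $S^c_{R_n^v}$ — or all its bad witnesses lie at scale $\le 4^{n-k-1}$; but since non-goodness must hold for \emph{all} points of $R_n^v$, in the latter case we may pass to the corner subcubes $R_n^{v1},\dots,R_n^{v\,2^d}$, each of side $4^{n-k-1}$, and apply the statement one level down to each, using $d(R_n^{vi},R_n^{vj})\ge 2\cdot 4^{n-k-1}$. (One must be slightly careful that the witnesses for points of $R_n^{vi}$ at scale up to $4^{n-k-1}$ really only ``see'' that corner subcube's own annular shells — this is where the separation of corner subcubes is used, and where the ``few points'' fudge from Remark \ref{points} is harmless since all scales are up to constants.) Iterating this dichotomy from the root down to depth $n$ and distributing the product over the arising binary-type choices yields precisely a sum over all $T\in\ttt_n$ of $\prod_{v\in L_T}S^c_{R_n^v}$: each internal vertex of $T$ corresponds to a ``descend to children'' choice and each leaf to a ``bad witness at this scale'' choice.

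Finally I would combine this with the pointwise Chebyshev-type bound already recorded in the displayed inequality preceding the lemma, namely
\[
S^c_{R_n^v}=\mathop{\sum_{x\in R_n^v,\,y\in\La}}_{4^{n-k-1}<|x-y|<4^{n-k}}\chi^c_{xy}\le \mathop{\sum_{x\in R_n^v,\,y\in\La}}_{4^{n-k-1}<|x-y|<4^{n-k}}\frac{B_{x,y}^m}{b^m|x-y|^{\al m}},
\]
which holds because $\chi^c_{xy}=1$ forces $B_{x,y}>b|x-y|^\al$, hence $B_{x,y}^m/(b^m|x-y|^{\al m})>1$. This gives the stated form of $S^c_{R_n^v}$ and completes the proof.

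The step I expect to be the main obstacle is making the recursion airtight at the level of which annular shells/witnesses are ``charged'' to which subcube: one needs that when all bad witnesses of points in $R_n^v$ sit at scales $\le 4^{n-k-1}$, the restriction of non-goodness to a corner subcube $R_n^{vi}$ only involves $y$'s with $|x-y|\le 4^{n-k-1}$, and that these are correctly partitioned between ``witness in the current shell $S^c$'' and ``descend further'' without double counting — i.e. that the telescoping of scales $4^{n-k}$ down to $4^0$ exactly matches the depth range $0\le k\le n$ of $\rrr$. All of this is bookkeeping with dyadic (base-$4$) scales and the fixed separation constants, and is where the short proof of \cite[Lemma 9]{dsz} does its work; I would simply transcribe that argument with the present notation.
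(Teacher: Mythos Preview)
Your approach is the paper's: establish the one-step inequality $\chi_{R_n}^c \le S^c_{R_n} + \prod_{i=1}^{2^d}\chi^c_{R^i_{n-1}}$ from the dichotomy ``either some $x\in R_n$ has a bad witness at scale in $(4^{n-1},4^n)$, or every $x\in R_n$ is already non-$(n{-}1)$-good, hence so is every point of each corner subcube'', and then induct on $n$ (base case $n=0$ trivial since $\chi_{R_0}^c=0$). Two remarks on your write-up: your phrasing ``fix any point $x^\ast$'' muddles the quantifiers---the dichotomy is over \emph{all} $x\in R_n^v$ at once, not a single representative---and your anticipated obstacles are non-issues: the separation $d(R_n^i,R_n^j)\ge 2\cdot 4^{n-1}$ is \emph{not} used in this lemma (it matters only later, in Section~\ref{sec:exp4}, to make the leaf diamonds disjoint when applying the protected Ward estimate \eqref{protect3}), and since the claim is an upper bound on an indicator there is no double-counting or ``charging'' bookkeeping to do.
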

\begin{proof}
If there is no $n$-good point in $R_n$ then either there is no $(n-1)$-good point in any of the subcubes $\{R_n^i, i=1, \ldots, 2^d\}$ or else
there exists at least one pair $(x,y)\in R_n\times \Lambda$ with $4^{n-1}<\vert x-y\vert < 4^n$ and $B_{x,y}>b \vert x-y\vert^{\alpha}$. This gives the first
level inequality
$$
\chi_{R_n}^c \le S_{R_n}^c+ \prod_{i=1}^{2^d} \chi^c_{R^i_{n-1}}.
$$
Then the proof follows by induction on the integer $n$. For $n=0$, $R_0(z)$ is the singleton $z$ and $\chi_{R_0}^c= S^c_{R_0}=0$ which initializes the induction.
If Lemma \ref{Rn} is valid at level $n-1$ then, obviously, the previous inequality implies it at level $n$. 
\end{proof}  

\section{Inductive argument}
\label{induction}
Fix $b>1$ and $\al\in (0,1/8)$, 
which will be chosen later; let $C=\Cst(d,b)$ be the constant 
considered in Proposition \ref{resistance-bound}. 
\begin{definition}
The sets $R_{xy}$ in our induction are classified as follows:
\begin{enumerate} 
\item[$\bullet$]  Class 1: diamonds $R_{xy}$, with $|x-y|>a^{1/4}$;
\item[$\bullet$] Class 2: deformed diamonds $R_{xy}$, with $|x-y|>a^{1/4}$;
\item[$\bullet$]  Class 3: deformed diamonds $R_{xy}$, with $|x-y|\le a^{1/4}$,
\end{enumerate}
{where $a=\inf a_{ij}$ was defined in \eqref{adef}.} 
\end{definition}
Our goal in this section is to prove the following theorem.
 \begin{theorem}
\label{thm:induction}
Let $m=a^{1/8}$, let $\rho=1/2$ and assume $a\ge a_0$ for some constant $a_0\ge1$. For all $n_1$, $n_2$ and $n_3\ge0$,  let $R_{x_iy_i}$, $i=1,\ldots n_1$, $R_{p_jq_j}$, $j=1,\ldots n_2$, and $R_{r_ks_k}$, $k=1,\ldots n_3$ be respectively subsets of class 1, 2 and 3. Then we have  
$$\lan\prod_{i=1}^{n_1}B_{x_iy_i}^m\prod_{j=1}^{n_2}B_{p_jq_j}^{3m}\ochi_{p_jq_j}\prod_{k=1}^{n_3}B_{r_ks_k}^{3m}\ran
\le2^{n_1}(1+\rho)^{n_2}2^{n_3}.$$
\end{theorem}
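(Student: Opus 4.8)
The plan is to prove Theorem~\ref{thm:induction} by induction on the maximal diameter $\max_i |x_i-y_i|$ (equivalently on the scale $n$ with $4^{n-1}<\max|x_i-y_i|\le 4^n$) appearing among the regions of classes~1 and~2, treating class~3 as a base-level contribution. First I would handle the innermost ``protected'' pieces: the class~2 and class~3 factors $B_{p_jq_j}^{3m}\ochi_{p_jq_j}$ and $B_{r_ks_k}^{3m}$ are already (almost) controlled by the protected Ward estimate, Lemma~\ref{wardp}. Indeed, since $m=a^{1/8}$ we have $3m<a/C$ for $a$ large, so \eqref{protect3} gives $\lan\prod B_{p_jq_j}^{3m}\ochi_{p_jq_j}\ran\le\prod(1-3mC/a)^{-1}\le(1+\rho)^{n_2}$ once $a_0$ is large enough; for class~3, $|r_k-s_k|\le a^{1/4}$ is a bounded-scale object, so a direct estimate (Lemma~\ref{estcond} plus the deterministic bound $B\ge 1$, or a crude Ward bound on such a short diamond) yields the factor $2^{n_3}$. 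The genuinely inductive work concerns the class~1 factors $B_{x_iy_i}^m$, whose exponent $m$ is \emph{unprotected}.

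The core mechanism, following \cite{dsz}, is: pick the largest region, say $R_{x_1y_1}$ of class~1 with $|x_1-y_1|\in(4^{n-1},4^n]$. Split $B_{x_1y_1}^m=B_{x_1y_1}^m\ochi_{x_1y_1}+B_{x_1y_1}^m(1-\ochi_{x_1y_1})$. On the event $\ochi_{x_1y_1}$, Proposition~\ref{resistance-bound} gives $D^N_{x_1y_1}\le C/a$, so Lemma~\ref{wardp} (specifically \eqref{protect2}, applied with the remaining regions) absorbs this term into the product, contributing a factor $(1-mC/a)^{-1}\le 2$ (using $m=a^{1/8}$, $a\ge a_0$) — hence the $2^{n_1}$. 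On the complementary event $1-\ochi_{x_1y_1}=1-\prod_{j\in R^{x_1}_{x_1y_1}}\chi_{x_1j}\prod_{j\in R^{y_1}_{x_1y_1}}\chi_{y_1j}$, at least one factor $\chi$ fails, i.e.\ there is $j$ in the diamond with $B_{x_1j}>b|x_1-j|^\al$ (or the analogous statement at $y_1$). One then replaces $\indic_{\{B_{x_1j}>b|x_1-j|^\al\}}\le (B_{x_1j}/(b|x_1-j|^\al))^{3m}$ (Chebyshev), and uses Lemma~\ref{Bxyz} to write $B_{x_1y_1}\le 2B_{x_1j}B_{jy_1}$, so that $B_{x_1y_1}^m$ times this indicator is bounded by a product of $B$'s over the two \emph{strictly smaller} diamonds $R_{x_1j}$ and $R_{jy_1}$ (this is where the geometry of $R^{x_1}_{x_1y_1},R^{y_1}_{x_1y_1}$ and the constraint $f_x+f_y\ge 1+1/5$ enters, forcing $|x_1-j|,|j-y_1|\le f\,|x_1-y_1|$ with $f<1$ bounded away from~1, so the new scale has strictly dropped). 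Summing over the $O(4^{nd})$ choices of $j$, these smaller diamonds become new class~2/class~3 regions (deformed because $j$ lies in a narrow cone), and the whole expression is reduced to a sum of expectations of the same form but with strictly smaller maximal scale, to which the induction hypothesis applies.

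The bookkeeping to make this close is the delicate part, and I expect the main obstacle to be exactly the \textbf{combinatorial/entropic control of the sum over the bad vertex $j$}: each unprotecting step multiplies the number of terms by a polynomial (in the current scale) factor $\sim 4^{nd}$, and one must check that the gain from the Chebyshev exponent — the power $|x_1-j|^{-3m\al}$ together with the shrinkage of scale and the fact that the two sub-diamonds are genuinely deformed diamonds eligible for the protected estimate \eqref{protect3} — beats this entropy, uniformly in $n$, so that the total bound remains $2^{n_1}(1+\rho)^{n_2}2^{n_3}$ with no scale-dependent prefactor. Concretely this forces the choice $m=a^{1/8}$ (large), $\al\in(0,1/8)$, $\rho=1/2$ and $a\ge a_0$ to be made compatibly: one needs $3m\al$ large enough that $\sum_{j}|x_1-j|^{-3m\al}\cdot(\text{scale factors})$ is small, while simultaneously $3m<a/C$ for Proposition~\ref{resistance-bound}/Lemma~\ref{wardp} to apply to the newly created class~2 diamonds, and $mC/a$ small so the protected factors stay below $2$ and $1+\rho$. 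I would set up the induction with a slightly strengthened inductive statement (e.g.\ tracking that newly generated regions are deformed diamonds with the $(f_x,f_y)$ constraints, and keeping a small slack in the constants $2,1+\rho$) so that each unprotecting step genuinely consumes part of the slack and the induction propagates cleanly down to scale~$0$, where class~3 and the trivial bound $B\ge 1$ terminate it.
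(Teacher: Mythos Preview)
Your overall plan captures the right spirit (induction on the largest class~1 scale, protect-or-split dichotomy), but there is a genuine gap in the ``split'' step that makes the argument, as written, fail.

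When $z$ (your $j$) is the first bad point seen from $x$, i.e.\ on the event $\chi_{xz}^c\prod_{j:|j-x|<|z-x|}\chi_{xj}$, you propose to feed $R_{xz}$ back into the induction as a class~2 region. But class~2 carries the indicator $\ochi_{xz}=\prod_{j\in R_{xz}^x}\chi_{xj}\prod_{j\in R_{xz}^z}\chi_{zj}$, and the ordered expansion only supplies the $x$-side of this product; the $z$-side protection $\prod_{j\in R_{xz}^z}\chi_{zj}$ is \emph{not} available, since nothing controls $B_{zj}$ for $j$ near $z$. Without $\ochi_{xz}$ you cannot invoke \eqref{protect3}, so $R_{xz}$ is not eligible as class~2. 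It is not class~3 either in general, since $|z-x|$ can be as large as $(3/5)|x-y|\gg a^{1/4}$. And treating the two pieces as class~1 does not reduce the induction parameter: if $z$ is close to $x$ then $|z-y|$ is essentially $|x-y|$, so your claim that both $|x-j|,|j-y|\le f|x-y|$ with $f$ bounded away from $1$ is false for at least one piece.

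The paper closes this gap with a considerably more elaborate four-case decomposition of the bad event, hinging on whether there is a \emph{good point} $g$ near $z$ (good meaning $\chi_{gh}=1$ for all $h$ up to distance of order $|z-x|/5$). The central case is~(iii): if such a $g$ exists, then $R_{xg}$ is a legitimate class~2 region because $\ochi_{xg}$ genuinely holds --- the $x$-side from the ordered expansion, the $g$-side precisely because $g$ is good --- and the segment from $g$ to $y$ is broken via an intermediate point into strictly shorter class~1 diamonds. When no good point exists near $z$ (case~(iv)), one cannot manufacture a class~2 region at all; instead the paper invokes the hierarchical estimate of Lemma~\ref{Rn}, which converts ``no good point in a cube'' into a tree-indexed product of small-scale Chebyshev factors. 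Cases~(i) (short range, $|z-x|\le a^{1/4}$, a genuine class~3 piece) and~(ii) (a second bad pair at intermediate scale near $z$) are comparatively routine. Your single Chebyshev-and-split step collapses all of this and does not supply the missing $z$-side protection; the good-point mechanism and the hierarchical Lemma~\ref{Rn} are the missing ideas.
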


The proof is by induction  on $\max_{1\le i\le n_1}|x_i-y_i|$. Let ${\bf(H)_\ell}$ be the following statement: 
Theorem \ref{thm:induction} holds if $$\max_{1\le i\le n_1}|x_i-y_i|\le\ell.$$ 

The first step in the induction is the case $n_1=0$, proved in Section \ref{short} below: this will require $a\ge\Cst(d,b)$.

Let us now show that ${\bf(H)_{\ell-1}}$ implies ${\bf(H)_\ell}$. We do the proof only in the case where $n_1=1$, the general case being only notationally more involved.  

Assume ${\bf(H)_{\ell-1}}$. Let $x$, $y$ $\in\La$ be such that $\ell-1<|x-y|\le\ell$. 
Let $\tR_{xy}$ be the deformed diamond between $x$ and $y$ introduced in Definition \ref{deformed}, with $l=y-x$:  
this corresponds to a perfect diamond with angle
$\pi/16$ instead of $\pi/4$. Let $\tR_{xy}^x$ and $\tR_{xy}^y$ be its two parts in Definition \ref{deformedf}, 
respectively from $x$ and $y$, with $f_x=f_y=3/5$.   

Define
$$u_{xy}=\prod_{j\in\tR_{xy}^x}\chi_{xj}\prod_{j\in\tR_{xy}^y}\chi_{yj},$$
and let 
$$\Rc(x,y)=\sum_{z\in\tR_{xy}^x}B_{xy}^m\chi_{xz}^c\prod_{j:\,|j-x|<|z-x|}\chi_{xj}.$$
Then it follows from the expansion of the partition of the unity
$$1=\prod_{j\in\tR_{xy}^x}(\chi_{xj}+\chi_{xj}^c)\prod_{j\in\tR_{xy}^y}(\chi_{yj}+\chi_{yj}^c)$$
that
\begin{equation}
\label{part}
\lan B_{xy}^m\ran\le\lan B_{xy}^mu_{xy}\ran+\lan\Rc(x,y)\ran+\lan\Rc(y,x)\ran.
\end{equation}
The first term in the right-hand side of \eqref{part} can be upper bounded, by \eqref{protect3} in Lemma \ref{wardp}: if $a\ge\Cst(C)=\Cst(d,b)$ (recall $m=a^{1/8}$), then 
\begin{equation}
\label{1pr}
\lan B_{xy}^mu_{xy}\ran\le(1-mC/a)^{-1}\le1+\rho\le3/2.
\end{equation}
It remains to upper bound $\lan\Rc(x,y)\ran$. 

Now, $\Rc(x,y)$ being an expansion over ``bad'' points $z$ (i.e. sites $z$ such that $\chi_{xz}^c$ holds),  we expand it into four terms. 

{\bf (i)} First, the sites $z$ close $x$, i.e. with $|z-x|\le a^{1/4}$:
$$\Rc_1(x,y)=\sum_{z\in\tR_{xy}^x:\,|z-x|\le a^{1/4}}B_{xy}^m\chi_{xz}^c\prod_{j:\,|j-x|<|z-x|}\chi_{xj}.$$
We prove in Section \ref{sec:exp1} that $\lan\Rc_1(x,y)\ran\le1/16$ if $b\ge\Cst(d)$ and $\al m\ge d$ (Case 1 in \cite{dsz}).

{\bf (ii)} Second, fix a constant $M$, which will only depend on the dimension $d$. 
For a site $z$ far from $x$ (i.e. with $|z-x|>a^{1/4}$),  let
$$v_{x,y,z}=\prod_{\substack{j,k\in\tR_{xy}^x\cup\tR_{xy}^y,\, |j-z|\le|z-x|^{1/2},\\
M|z-x|^{1/2}\le|j-k|\le|z-x|/5}}\chi_{jk},$$
and let 
$$\Rc_2(x,y)=\sum_{z\in\tR_{xy}^x:\,|z-x|>a^{1/4}}B_{xy}^m\chi_{xz}^cv_{x,y,z}^c\prod_{j:\,|j-x|<|z-x|}\chi_{xj}.$$

Then $v_{x,y,z}^c=1$ if there is a large scale ``bad'' event originating from a point near $z$. 
We will show in Section \ref{sec:exp2} that the corresponding term $\lan \Rc_2(x,y)\ran\le1/16$ 
provided $b\ge\Cst(d)$ and $\al m\ge 10d$ (Case 2a in \cite{dsz}). 

{\bf (iii)} Third, we consider the case where $|z-x|>a^{1/4}$ and $v_{x,y,z}$ holds, 
i.e. there is no large scale ``bad'' event near $z$ and, furthermore, there is  a   point $g$ with 
$|g-z|\le|z-x|^{1/2}$ that is good up to distance $M|z-x|^{1/2}$. 

More precisely, given $i\in\La$, $R>0$, let   
$$G(i,R)=\prod_{h:\,|i-h|\le R}\chi_{ih};$$
then $G(i,R)=1$ iff $i$ is ``good'' up to distance $R$. Recall the similar Definition \ref{def:good} 
that a site $x\in\La$ is called $n$-good if $\chi_{xy}=1$, for all $y$ with $|y-x|\le4^n$.  

Let 
$$g_{x,y,z}=\max_{g:\,|g-z|\le|z-x|^{1/2}}G(g,M|z-x|^{1/2}).$$
Then $g_{x,y,z}=1$ iff we can find a site $g$ in the ball of radius $|z-x|^{1/2}$ centered at $z$, 
which is good up to distance $M|z-x|^{1/2}$. If moreover $v_{x,y,z}=1$ then $g$ is good up to
distance $|z-x|/5$. 

Hence we will see that, if $v_{x,y,z}=g_{x,y,z}=1$, then we can find a deformed diamond from $x$ to $g$ close to $z$ such that $\ochi _{x,g}=1$, so that we can apply the induction hypothesis. If we let
$$\Rc_3(x,y)=\sum_{z\in\tR_{xy}^x:\,|z-x|>a^{1/4}}B_{xy}^m\chi_{xz}^cv_{x,y,z}g_{x,y,z}\prod_{j:\,|j-x|<|z-x|}\chi_{xj},$$
then $\lan \Rc_3(x,y)\ran\le1/16$ provided $a^\al\ge\Cst(d)$ and $\al m\ge3d$ (Case 2b in \cite{dsz}): 
this is proved in Section \ref{sec:exp3}.
 
{\bf (iv)} Fourth, $|z-x|>a^{1/4}$, $v_{x,y,z}=1$ but $g_{x,y,z}=0$ then there is no good point up to distance 
$M|z-x|^{1/2}$  in the ball centered at $z$ of radius $|z-x|^{1/2}$. 
For $M$ small enough $M\le\Cst(d)$
this ball contains the hypercube centered at $z$ of side  $4M|z-x|^{1/2}$. 
This implies that  $\chi_{R_{n(x,z)}(z)}^c$ holds, where $n(x,z)$ is the  upper integer part 
of $\log (M|z-x|^{1/2})/\log 4$. 
Then, if we let 
$$\Rc_4(x,y)=\sum_{z\in\tR_{xy}^x:\,|z-x|>a^{1/4}}B_{xy}^m\chi_{xz}^cv_{x,y,z}\chi_{R_{n(x,z)}(z)}^c\prod_{j:\,|j-x|<|z-x|}\chi_{xj},$$
 we apply the ``good points'' expansion obtained in Section \ref{good} to show in Section \ref{sec:exp4} that $\lan \Rc_4(x,y)\ran\le1/16$ if $a\ge\Cst(d)$, $b\ge\Cst(d)$ and $\al m\ge8d$ (Case 2c in \cite{dsz}).

In summary, 
$$\lan \Rc(x,y)\ran\le\sum_{i=1}^4\lan \Rc_i(x,y)\ran,$$
and $\lan \Rc_i(x,y)\ran\le1/16$ is proved in Sections \ref{sec:exp1}, \ref{sec:exp2}, \ref{sec:exp3} and \ref{sec:exp4}, respectively in the cases $i=1$, $2$, $3$ and $4$. 

Let us summarize our choice of constants: $M\le\Cst(d)$ in {\bf(iv)}, $b\ge\Cst(d)$ in {\bf(i)-(ii)} and {\bf(iv)}, $a\ge\Cst(d,b)$ for the short-range fluctuations ($n_1=0$, Section \ref{short}) and in {\bf(iv)}, $\al\ge\Cst(d)/\log a$ in {\bf(iii)} (which implies in particular $\al m=\al a^{1/8}\ge10d$ required in {\bf(ii)-(iv)} if $a\ge\Cst$), and $m=a^{1/8}$ and $\rho=1/2$ in the statement of Theorem \ref{thm:induction}.
\subsection{Proof of Theorem \ref{thm:induction} in the case $n_1=0$.}
\label{short}
The proof is similar to the one of lemma 8 in \cite{dsz}. First, note that the case $n_3=0$ follows from 
Lemma \ref{wardp}, for $a\ge\Cst(C)$.

Let us do the proof in the case $n_2=0$ and $n_3=1$, the argument for the general case being only notationally more involved. 

Let $\de>0$. For all $p, q\in\La$, $p\sim q$, let 
$$\xi_{pq}=\1_{\{B_{pq}\le1+\de\}}.$$

Using $1\le \prod_{\{p,q\}, p, q\in R_{xy}}\xi_{pq}+\sum_{\{p,q\}, p, q\in R_{xy}}\xi_{pq}^c$, we have
\begin{equation}
\label{shortdec}
\lan B_{xy}^{3m}\ran\le\lan B_{xy}^{3m}\prod_{\{p,q\}, p, q\in R_{xy}}\xi_{pq}\ran+\sum_{\{p,q\}, p, q\in R_{xy}}\lan B_{xy}^{3m}\xi_{pq}^c\ran.
\end{equation}

Let us first deal with the first term in the right-hand side of \eqref{shortdec}: $B_{pq}\le1+\de$ implies 
\[
\delta\geq \cosh(U_p-U_q)-1\geq (U_p-U_q)^2/2\geq 0.
\]
Choose $\de>0$ such that $a^{1/4}\sqrt{2d\de}=1$, i.e. $\de=a^{-1/2}/(2d)$.

Let $z=x$, $y$, and assume $\prod_{\{p,q\}, p, q\in R_{xy}}\xi_{pq}$ holds. Let $|.|_1$ denote the $L^1$ norm on $\R^d$. 

Then, for all  $j\in R_{xy}$,  using Cauchy-Schwarz inequality, 
\begin{equation}
\label{Bzj1}
|U_z-U_j|\le|z-j|_1\sqrt{2\de}\le |z-j|\sqrt{d}\sqrt{2\de}\leq a^{1/4}\sqrt{2d\de}=1,
\end{equation} 
Subsequently, for all $p$, $q$ $\in R_{xy}$, $p\sim q$, 
\[
\delta \geq  B_{pq}-1\geq \frac{e^{U_p+U_q}}{2}(S_p-S_q)^2\geq \frac{e^{-2}}{2}e^{2U_z}(S_p-S_q)^2.
\]
which implies $e^{U_z}|S_p-S_q|\le\sqrt{2\de}e$. Using again our choice of $\de$, we deduce that,  again if $j\in R_{xy}$,
$e^{U_z}|S_z-S_j|\le a^{1/4}\sqrt{2d\de}e=e$, so that 
\begin{equation}
\label{Bzj2}
\frac{e^{U_j+U_z}}{2}(S_j-S_z)^2\le\frac{e}{2}e^{2U_z}(S_j-S_z)^2\le\frac{e^3}{2}.
\end{equation}
Inequalities \eqref{Bzj1}-\eqref{Bzj2} together imply that $B_{zj}\le\cosh(1)+e^3/2=\Cst$.

Therefore $\ochi_{xy}$ holds if  $b\ge\Cst$ and $\al=0$ implies
\begin{equation}
\label{1sh}
\lan B_{xy}^{3m}\prod_{\{p,q\}, p, q\in R_{xy}}\xi_{pq}\ran\le(1-3mC/a)^{-1}\le3/2,
\end{equation}
assuming $a\ge\Cst(C)=\Cst(d,b)$ (recall $m=a^{1/8}$).

Let us now deal with the second term in the right-hand side of \eqref{shortdec}: fix $p$, $q$ $\in R_{xy}$, $p\sim q$, and use that, by Markov inequality,
\[
\xi^{c}_{pq}\le\left(\frac{B_{pq}}{1+\delta }\right)^{a/2}.
\]
 Let $n=|x-y|_1$ and  $\ell-1< |x-y|\leq \ell$.  Then $n\le  \sqrt{d} |x-y|\leq \sqrt{d}\ell$, again by Cauchy-Schwarz inequality.

Let $(x_0,\ldots,x_n)$ be a path of minimal  $L^1$ distance from $x$ to $y$ inside $R_{xy}$, which does not go through the edge $\{p,q\}$. 
By repeated application of Lemma \ref{Bxyz}, 
$$2B_{xy}\le \prod_{0\le j\le n-1}2B_{x_jx_{j+1}}.$$
Therefore
\begin{align}
\nonumber
\lan B_{xy}^{3m}\xi_{pq}^c\ran&\le\frac{2^{3m\sqrt{d}(n-1)}}{(1+\de)^{a/2}}\lan B_{pq}^{a/2}\prod_{0\le j\le n-1}B_{x_jx_{j+1}}^{3m}\ran\le \frac{2^{3m\sqrt{d}(n-1)}}{(1+\de)^{a/2}}2\left(1-\frac{3m}{a}\right)^{-n}\\
\label{2sh}
&\le\exp\left(3m\sqrt{d}\ell-a\de/3\right)\le\exp\left(3a^{3/8}\sqrt{d}-a^{1/2}/(6d)\right).
\end{align}
In the second inequality, we use \eqref{protect1} and note that, if $r\sim s$, then $D_{rs}^N=a^{-1}_{rs}\leq  a^{-1}$. 
In the third inequality we assume $a\ge\Cst$ and use, for $x\in[0,1/2]$,  $(1-x)^{-1}\le e^{2x}$ and $(1+x)^{-1}\le e^{-2x/3}$. 
Finally, we use $\de=a^{-1/2}/(2d)$, $m=a^{1/8}$ and $\ell\le a^{1/4}$ (since $R_{xy}$ is of Class 3) in the last inequality. 

In summary, \eqref{shortdec}, \eqref{1sh} and \eqref{2sh} together imply, if $a\ge\Cst(C,d)=\Cst(d,b)$,  then
$$\lan B_{xy}^{3m}\ran\le3/2+\Cst(d)\ell^d\exp\left(3a^{3/8}\sqrt{d}-a^{1/2}/(6d)\right)\le2.$$

\subsection{Proof of $\lan \Rc_1(x,y)\ran\le1/16$}
\label{sec:exp1}
Let $z\in\tR_{xy}^x$ be such that $|z-x|\le a^{1/4}$. 
Using $\chi_{xz}^c\le B_{xz}^{2m}b^{-2m}|z-x|^{-2\al m}$ and Lemma \ref{Bxyz}, we obtain
$$B_{xy}^m\chi_{xz}^c\le2^mB_{xz}^mB_{zy}^m\chi_{xz}^c\le2^mb^{-2m}|z-x|^{-2\al m}B_{xz}^{3m}B_{zy}^m.$$

In order to apply the induction assumption, we would need to construct a deformed diamond $R_{xz}$ and diamond $R_{zy}$ which do not intersect within $R_{xy}$. This is not true in general, but we can add an intermediate point $a\in R_{xy}$ such that  $R_{xz}$, $R_{za}$ and $R_{xz}$ are respectively one deformed diamond and two diamonds within $R_{xy}$ and disjoint, except at endpoints (see Figure \ref{fig:gbound}, and Lemma 12 [1] in \cite{dsz} for more details). Now, using again Lemma \ref{Bxyz},
\begin{figure}
\centerline{\psfig{figure=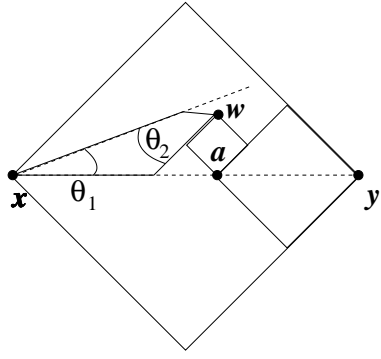, width=5cm}}
    \caption{(Figure 5 in \cite{dsz}) We add one intermediate point $a$. The two angles
    $\theta_{1}$ and $\theta_{2}$ are greater than $\pi/8$.}
\label{fig:gbound}
\end{figure}
$$B_{xy}^m\chi_{xz}^c\le2^{2m}b^{-2m}|z-x|^{-2\al m}B_{xz}^{3m}B_{za}^mB_{ay}^m.$$
Therefore, using the induction assumption, 
\begin{align*}
\lan\Rc_1(x,y)\ran\le\sum_{z\in\tR_{xy}^x:\,|z-x|\le a^{1/4}}4^{m+3}b^{-2m}|z-x|^{-2\al m}
\le\left(\frac{4}{b^2}\right)^m\Cst(d)\sum_{r=1}^{a^{1/4}}r^{d-1-2\al m}\le1/16
\end{align*}
if $b\ge\Cst(d)$ and $\al m\ge d$.
\subsection{Proof of $\lan \Rc_2(x,y)\ran\le1/16$}
\label{sec:exp2}
Let $z\in\tR_{xy}^x$ be such that $|z-x|>a^{1/4}$, and let $j$, $k$ $\in\tR_{xy}^x\cup\tR_{xy}^y$ such that
\begin{equation}
\label{largf}
|j-z|\le|z-x|^{1/2}\tx{ and }M|z-x|^{1/2}\le|j-k|\le|z-x|/5.
\end{equation}
As above, we use 
\begin{equation}
\label{chijk}
\chi_{jk}^c\le B_{jk}^{m}b^{-m}|j-k|^{-\al m}.
\end{equation} In order to apply the induction assumption, we need to expand $B_{jk}^{m}B_{xy}^m$ into a product of terms arising from disjoint diamonds within $R_{xy}$. 
It is an easy geometric result to show that, under our assumptions on $z$, $j$ and $k$, we can choose four intermediate points $a_i\in R_{xy}$, so that $R_{xa_1}$, $R_{a_ia_{i+1}}$ ($i=1,\ldots,3$) and $R_{a_4y}$ are diamonds with disjoint interiors which do not overlap with the diamond $R_{jk}$ (see Figure \ref{fig:gbound1}, and Lemma 12 [2] in \cite{dsz} for more details).
\begin{figure}
\centerline{\psfig{figure=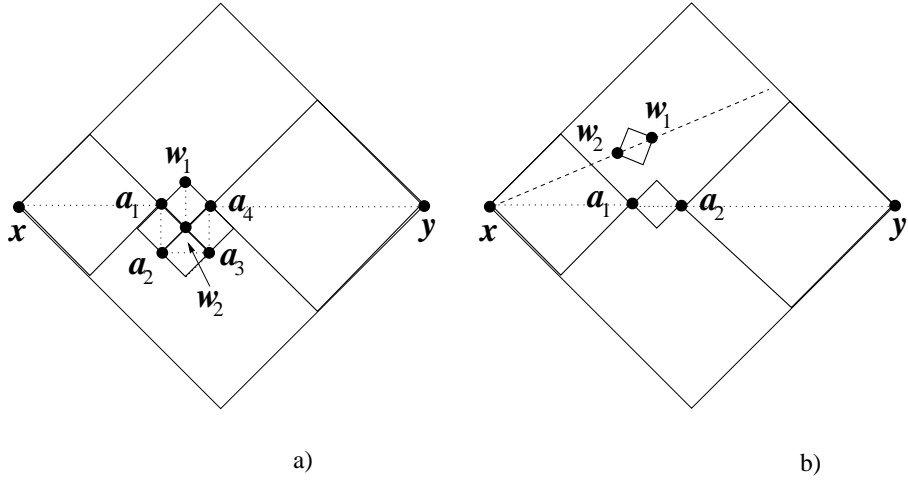, width=12cm}}
    \caption{(Figure 6 in \cite{dsz})
    a) If the pair $w_1 w_2$ is right in the middle, then
    we need to add four intermediate points $a_1, \ldots, a_4\,$ in order
    to find a minimal connected path around $w_{1}w_{2}$
    paved with disjoint diamonds.
    b) Even if the pair $w_1 w_2$ is located on the boundary of
    $\tilde{R}_{xy}^{x}\,$, the region $R_{w_1 w_2}$ still lies
    inside $R_{xy}\,$.} \label{fig:gbound1}
\end{figure}
Now, using \eqref{chijk} and Lemma \ref{Bxyz}, 
$$B_{xy}^m\chi_{jk}^c\le 2^{4m}B_{xa_1}^m\prod_{i=1}^3B_{a_ia_{i+1}}^mB_{a_4y}^mB_{jk}^mb^{-m}|j-k|^{-\al m},$$
which implies, using the induction assumption  and $|j-k|\geq M|z-x|^{1/2}$, 
$$\lan B_{xy}^m\chi_{jk}^c\ran\le 2^{4m}2^6b^{-m}|z-x|^{-\al m/2}M^{-m\alpha }.$$
Now, for each $z\in\tR_{xy}^x$, there are of the order of $\Cst(d)|z-x|^{d+d/2}$ pairs $(j,k)$ satisfying \eqref{largf}. Therefore
\[
\lan\Rc_2(x,y)\ran\le\left(\frac{16}{bM^{\alpha }}\right)^m\Cst(d)\sum_{r>a^{1/4}}r^{(d-1)+d+d/2-\al m/2}\le\frac{1}{16}
\]
if  $b\ge\Cst(d)$ (recall that $M$ depends only on $d$) and $\al m\ge 10d$.  
\subsection{Proof of $\lan \Rc_3(x,y)\ran\le1/16$}
\label{sec:exp3}
Let $z\in\tR_{xy}^x$ be such that $|z-x|>a^{1/4}$. 

If $v_{x,y,z}=g_{x,y,z}=1$, then there exists $g$ with $|g-z|\le|z-x|^{1/2}$ such that, for all $h\in R_{xy}$ with $|g-h|\le |z-x|/5$, $\chi_{gh}$ holds. Let $R_{xg}$ be the deformed diamond in Definition \ref{deformed}, with $l=g-x$, and choose 
$f_x=\min\{1,(|z-x|-1)/|g-x|\}\geq 1-O(a^{-1/8})$,
$f_{g}=1/5$ in Definition \ref{deformedf}. Then $\ochi_{xg}$ holds.

On the other hand, if $\chi_{xz}^c$ occurs then, using $B_{gz}\le b|z-g|^\al\le b|z-x|^{\al/2}$ and Lemma \ref{Bxyz}, we deduce that 
$2B_{xg}\ge B_{xz}/B_{gz}\ge b|z-x|^\al/(b|z-x|^{\al/2})=|z-x|^{\al/2}$.

Hence
$$B_{xy}^m\chi_{xz}^c\ochi_{xg}\le2^mB_{xg}^mB_{gy}^m\chi_{xz}^c\ochi_{xg}
\le2^{3m}(B_{xg}^{3m}\ochi_{xg})B_{gy}^m|z-x|^{-\al m}.$$

As in the proof of the case $i=1$, we introduce an intermediate point $a\in R_{xy}$ such that $R_{ga}$ and $R_{ay}$ are diamonds, 
disjoint from each other and from $R_{xg}$, except at endpoints (see Figure \ref{fig:gbound}, and Lemma 12 [1] in \cite{dsz} 
for more details).

Therefore
$$\lan B_{xy}^m\chi_{xz}^c\ochi_{xg}\ran\le 2^{4m}(1+\rho)2^2|z-x|^{-\al m}.$$
There are less than $|z-x|^{d/2}$ choices for $g$ given $z$, so that 
$$\lan\Rc_3(x,y)\ran\le2^{4m}\Cst(d)\sum_{r>a^{1/4}}r^{d-1+d/2-\al m}\le1/16$$
if $\al m\ge3d$ and $a^\al\ge\Cst(d)$. 
\subsection{Proof of $\lan \Rc_4(x,y)\ran\le1/16$}
\label{sec:exp4}
If $a\ge\Cst$, then $R_{n(x,z)}(z)$ is inside $R_{xy}$; recall that its side length is of the order of 
$|z-x|^{1/2} \ll|y-x|$. 
As in the proof of the case $i=2$, we can choose four intermediate points $a_i\in R_{xy}$, so that $R_{xa_1}$, 
$R_{a_ia_{i+1}}$ ($i=1,\ldots,3$) and $R_{a_4y}$ are diamonds with disjoint interiors which do not overlap with 
the hypercube $R_{n(x,z)}(z)$ (see Figure \ref{fig:gbound1}, and Lemma 12 [2] in \cite{dsz} for more details).

Now, using Lemma \ref{Rn}, 
\[
B_{xy}^m\chi_{R_{n(x,z)}(z)}^c\le 2^{4m}B_{xa_1}^m\prod_{i=1}^3B_{a_ia_{i+1}}^mB_{a_4y}^m
\sum_{T \in \mathcal{T}_n}  \prod_{v \in L_T} 
\mathop{\sum_{x_{v} \in R_v,\, y_{v} \in \Lambda}}_{4^{n-k_{v}-1} < |x_{v} - y_{v}| \leq 4^{n-k_{v}}}
 \frac{ B_{x_{v}y_{v}}^m }{b^m |x_{v}-y_{v}|^{\alpha m}}.
\]
This implies, letting $n_v=n-d_v$ and using the induction assumption, 
$$\lan B_{xy}^m\chi_{R_{n(x,z)}(z)}^c\ran\le2^{4m}2^5\sum_{T \in \mathcal{T}_n} \prod_{v \in L_T}
  \frac{ (4^{n_{v}})^d (4^{n_{v}} 2)^d }{b^m 4^{(n_{v}-1)\alpha m}}=
  2^{4m+5}I_{n(x,z)}$$
  where, for all $n\ge0$, 
  $$I_{n}=\sum_{T \in \mathcal{T}_n} \prod_{v \in L_T}\g2^{-\ze n_v},$$
  with $\g=(4^\al/b)^m2^d$, $\ze=2(\al m-2d)$.

It follows from the structure of the trees $\mathcal{T}_n$ that 
$$I_n=\g2^{-\ze n}+(I_{n-1})^{2^d},\,\,I_0=\g.$$
Assume $\al m\ge4d$ and $b\ge\Cst(d)$ (recall $\al\le1/4$), so that $\g\le1/4$ and $\ze\ge\al m$. Then we deduce by elementary induction that  $$I_n\le2^{-\al mn}\tx{ for all }n\ge1.$$ 
Note that $4^{n(x,z)}\ge2^M\sqrt{|z-x|}$. In summary, 
$$\lan \Rc_4(x,y)\ran\le2^{4m}2^{-\al mM/2}\Cst(d)\sum_{r>a^{1/4}}r^{d-1-\al m/4}\le1/16$$
if $\al m\ge8d$ and $a\ge\Cst(d)$.

\section{Proof of Ward inequalities: Lemmas \ref{ward} and \ref{wardp}} 
\label{ward-resist}
We start this section with two lemmas:
the first is an elementary lemma which expresses the equivalent resistance on a conductance network as a quadratic
form and relates the quantity $D_{x,y}$ to the corresponding term in \cite{dsz}.
\begin{lemma}\label{electric}
Let $(V, E)$ be a finite connected graph and $(c_e)_{e\in E}$ a conductance network on $E$. 
We set $c_i=\sum_{j\sim i} c_{i,j}$.
Let $i_0\in V$ be a fixed vertex and $M$ be the matrix given by 
$$
 (M_{i,j})= \left\{\begin{array}{ll} {-c_{i,j}}, &\hbox{ $i\neq j$}
 \\
 c_i , &\hbox{ $i=j$}
\end{array}\right.
$$
(which is the matrix of the generator of the Markov process with jump rates $(c_{i,j})$).
Let $N$ be the restriction of $M$ to $V\setminus\{i_0\}$.
Denote by $G$ the $V\times V$ symmetric matrix defined by $G(i_0,y)=G(y,i_0)=0$ for any $y$ and
$G(x,y)=N^{-1}_{x,y}$ if $x,y\neq i_0$. 
If $D_{x,y}$ is the equivalent resistance between $x$ and $y$, then
$$
D_{x,y}=G(x,x)-2G(x,y)+G(y,y)= <(\delta_x-\delta_y),G (\delta_x-\delta_y)>.
$$
\end{lemma}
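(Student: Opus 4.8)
The plan is to establish Lemma~\ref{electric} via the standard variational (Dirichlet principle) characterization of effective resistance, combined with the explicit identification of the Green's function $G$ as the inverse of the Dirichlet Laplacian $N$ with a pole grounded at $i_0$. First I would recall that for a finite connected conductance network, the effective resistance $D_{x,y}$ between two vertices $x\neq y$ admits the representation
$$
D_{x,y}^{-1}=\inf\Big\{\tfrac12\sum_{i\sim j}c_{i,j}(\varphi_i-\varphi_j)^2 \ :\ \varphi_x=1,\ \varphi_y=0\Big\},
$$
or dually as a flow problem; equivalently, if $v$ is the voltage function with a unit current injected at $x$ and extracted at $y$ (so $Mv=\delta_x-\delta_y$ up to the usual kernel ambiguity), then $D_{x,y}=(v_x-v_y)$. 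The key algebraic fact is that grounding the network at $i_0$ (i.e.\ restricting $M$ to $V\setminus\{i_0\}$) makes $N$ invertible, and the function $w:=G(\delta_x-\delta_y)$ — which vanishes at $i_0$ by construction — satisfies $Nw=(\delta_x-\delta_y)\restriction_{V\setminus\{i_0\}}$, hence $Mw=(\delta_x-\delta_y)+\lambda\delta_{i_0}$ for the appropriate scalar $\lambda$ forcing the total to lie in the range of $M$. Since the current $\delta_x-\delta_y$ has zero total mass, in fact $\lambda=0$, so $w$ is exactly the voltage function for unit current from $x$ to $y$.

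The second step is then just to read off the potential difference: $D_{x,y}=w_x-w_y=\langle\delta_x-\delta_y,\,w\rangle=\langle\delta_x-\delta_y,\,G(\delta_x-\delta_y)\rangle$, and expanding the inner product using symmetry of $G$ gives $G(x,x)-2G(x,y)+G(y,y)$, which is the claimed formula. I would want to be slightly careful about the two edge cases: when $x=i_0$ or $y=i_0$, the formula still holds because $G(i_0,\cdot)=0$ by definition, reducing e.g.\ to $D_{i_0,y}=G(y,y)$, which is indeed the effective resistance from $i_0$ to $y$ in the grounded network (the diagonal entry of the inverse Dirichlet Laplacian is the standard expression for this, cf.\ Chapter~2 of \cite{lyons-peres}).

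The main obstacle — really the only subtle point — is justifying that $\lambda=0$, i.e.\ that the function obtained by inverting the \emph{restricted} matrix $N$ and extending by zero at $i_0$ genuinely solves the full network equation $Mw=\delta_x-\delta_y$ rather than $Mw=\delta_x-\delta_y-(\text{something})\delta_{i_0}$. This follows from the observation that $M$ has constant-vector kernel (rows sum to zero), so its range is exactly the mean-zero vectors; since $\delta_x-\delta_y$ is already mean-zero and $(\delta_x-\delta_y)+\lambda\delta_{i_0}$ must be mean-zero, we get $\lambda=0$. Alternatively one can check directly that the $i_0$-row of $Mw$ equals $-\sum_{j}c_{i_0,j}w_j$, and compare with the mean-zero constraint. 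Once this is in place, the rest is the routine identity $\langle \delta_x-\delta_y, G(\delta_x-\delta_y)\rangle = G(x,x)-2G(x,y)+G(y,y)$ using $G=G^{T}$. I would present this in a few lines, citing \cite{lyons-peres} for the variational characterization of effective resistance and for the Green-function interpretation.
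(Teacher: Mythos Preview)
Your argument is correct and complete. You take a different route from the paper: the paper interprets $G$ probabilistically, writing $G(x,y)=\frac{1}{c_x}\ggg_{i_0}(x,y)$ where $\ggg_{i_0}(x,y)$ is the expected number of visits to $y$ by the random walk started at $x$ and killed at $i_0$, and then invokes an exercise in \cite{lyons-peres} (Chapter~2, ex.~2.60) both for this identification and for the resistance formula. Your approach is instead purely linear-algebraic: you show directly that $w:=G(\delta_x-\delta_y)$ (extended by zero at $i_0$) solves the full network equation $Mw=\delta_x-\delta_y$, using the mean-zero constraint to kill the possible $\lambda\delta_{i_0}$ term, and then read off $D_{x,y}=w_x-w_y$. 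Your route is arguably more self-contained, since it does not pass through the probabilistic Green's function interpretation and avoids the citation; the paper's route has the advantage of making explicit the Markov-chain meaning of $G$, which resonates with the VRJP framework used elsewhere. Both are short and standard. One small comment: you open with the Dirichlet variational formula for $D_{x,y}^{-1}$ but never actually use it, relying instead on the voltage characterization $D_{x,y}=v_x-v_y$ for the unit-current solution; you could drop the variational preamble without loss.
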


\begin{remark}
In comparison with \cite{dsz}, it means that the term $G_{x,y}$ which appears in formula (5.4) in \cite{dsz} is
the equivalent resistance between $x$ and $y$ with conductances $c^{x,y}_{i,j}= {e^{-t_x-t_y} B_{x,y}} \beta_{i,j}e^{t_i+t_j}$. In our context $i_{0}=0$ and $V=\Lambda$.
\end{remark}

 \begin{proof}
We first interpret probabilistically the matrix $G$.
Let $\ggg_{i_0}(x,y)$ be the  average number of visits on $y$ for the Markov chain 
 starting at $x$
with transition probabilities $p_{i,j}=\frac{c_{i,j}}{c_i}$ and killed at its first
entrance hitting time of $i_0$
$$
\ggg_{i_0}(x,y)= \E_{x}\left( \sum_{k=0}^{H_{i_0}} \indic_{\{X_k=y\}}\right),
$$
where $H_{i_0}= \inf\{k\ge 0,\; X_k =i_0\}$. Then,   by 
\cite[Chapt.2: ex.2.60,(a)]{lyons-peres}, we have 
$$
G(x,y)= \frac{1}{c_x} \ggg_{i_0}(x,y),
$$
and  item (d) in the same exercise yields the result.
 \end{proof}
 
The next lemma ensures that the joint  density $\mu_{a,\Lambda } (u,s)$ given in \eqref{usmes} 
has bounded ``moments'' up to a certain order. 
\begin{lemma}\label{moments}
Let $e_{1},\dotsc e_{n}$ be $n$ undirected edges in $E$.
Then 
\[
\lan\prod_{j=1}^{n} B_{e_{j}}^{m_{j}} \ran \leq    2^{n}
\]
for any choice of $m_{1},\dotsc m_{n}$ such that $m_{j}\leq a/2$ for all $j=1,..,n$, { where 
$a=\inf a_{ij}$ was defined in \eqref{adef}}.
\end{lemma}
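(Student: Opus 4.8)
\textbf{Proof proposal for Lemma \ref{moments}.}

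The plan is to bound the moments $\lan \prod_j B_{e_j}^{m_j}\ran$ by working directly with the explicit density $\mu_{a,\Lambda}(u,s)$ from Proposition \ref{newm}, exploiting the fact that each factor $B_e^{-a_e}$ in that density can absorb a moment $B_e^{m_j}$ as long as $m_j \le a_e$, leaving a residual density of the same type but with shifted parameters whose total mass is still $1$. More precisely, first I would reduce to the case where the $e_j$ are distinct: if an edge $e$ appears with total exponent $\sum_{j: e_j = e} m_j$, I may as well assume this total is at most $a/2 \le a_e/2$ — wait, that is not automatic, so instead I group repeated edges and treat the sum of exponents on a given edge as a single exponent; since we only claim the bound $2^n$ with $n$ the number of listed edges (with multiplicity), it suffices to handle the worst case, and grouping only helps. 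So assume $e_1,\dots,e_n$ distinct with $m_j \le a/2$.

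The key computation is this: in the density \eqref{usmes}, the product $\prod_e B_e^{-a_e}$ times $\prod_{j=1}^n B_{e_j}^{m_j}$ equals $\prod_{e \notin \{e_j\}} B_e^{-a_e} \cdot \prod_{j=1}^n B_{e_j}^{-(a_{e_j} - m_j)}$. Thus
\[
\lan \prod_{j=1}^n B_{e_j}^{m_j}\ran = \int_{\hhh_0 \times \hhh_0} \frac{1}{(2\pi)^{N-1}}\Big[\prod_{e \notin \{e_j\}}\frac{1}{B_e^{a_e}}\Big]\Big[\prod_{j=1}^n \frac{1}{B_{e_j}^{a_{e_j}-m_j}}\Big] e^{-\sum_j u_j} D[M(u,s)]\, \prod_{k\ne 0} du_k\, ds_k.
\]
This is exactly $\mu_{a',\Lambda}$ integrated against $1$, where $a'$ is the weight family with $a'_{e_j} = a_{e_j} - m_j$ and $a'_e = a_e$ otherwise — \emph{provided} the matrix $M$ is unchanged, which it is not, since $M$ depends on $c_{i,j} = a_{i,j} e^{u_i+u_j}/B_{i,j}$ and hence on $a_{i,j}$. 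To fix this I would track the determinant explicitly via the matrix-tree expansion used in the proof of Proposition \ref{newm}: $D[M(u,s)] = \sum_T \prod_{\{i,j\}\in T} a_{ij} e^{u_i+u_j}/B_{ij}$. The new density $\mu_{a',\Lambda}$ has the determinant with $a'_{ij}$, i.e. $D[M'(u,s)] = \sum_T \prod_{\{i,j\}\in T}(a_{ij} - m_{ij}\indic_{ij\in\{e_k\}}) e^{u_i+u_j}/B_{ij}$ where I write $m_{ij}$ for the exponent on edge $\{i,j\}$. Since $a_{ij} - m_{ij} \ge a_{ij} - a/2 \ge a_{ij}/2 \ge \tfrac12 a_{ij}$ (using $m_{ij} \le a/2 \le a_{ij}/2$), each tree-term of $D[M]$ is at most $2^{|T|} = 2^{N-1}$ times the corresponding tree-term of $D[M']$, but this constant $2^{N-1}$ is far too large. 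A better route: since removing $n$ edges changes at most $n$ factors in each spanning tree, and edges not in $T$ contribute nothing to either sum, I would instead observe $D[M(u,s)] \le 2^n D[M'(u,s)]$ is still wrong in general — so the cleanest fix is to not compare determinants at all.

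The clean approach is instead: bound $D[M(u,s)]$ in terms of $D[M'(u,s)]$ only through edges that lie in a spanning tree; since for \emph{each} spanning tree $T$ and each edge $e_j$, either $e_j \in T$ (contributing a factor $a_{e_j}/(a_{e_j}-m_{e_j}) \le 2$ to the ratio of tree-terms) or $e_j \notin T$ (ratio $1$), we get $\prod_{\{i,j\}\in T} a_{ij} \le 2^n \prod_{\{i,j\}\in T}(a_{ij} - m_{ij})$ for every $T$, hence $D[M(u,s)] \le 2^n D[M'(u,s)]$ pointwise. Therefore
\[
\lan \prod_{j=1}^n B_{e_j}^{m_j}\ran \le 2^n \int_{\hhh_0\times\hhh_0} \mu_{a',\Lambda}(u,s)\, \prod_{k\ne 0} du_k\, ds_k = 2^n,
\]
where the last equality holds because $\mu_{a',\Lambda}$ is a probability density by Proposition \ref{newm} applied to the weight family $a'$ (which has all entries $a'_e > 0$, since $a'_{e_j} = a_{e_j} - m_j \ge a_{e_j}/2 > 0$). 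The main obstacle is precisely the bookkeeping in the determinant step: one must verify that $D[M(u,s)] \le 2^n D[M'(u,s)]$ holds pointwise and not merely after integration, which requires the matrix-tree representation and the termwise comparison over spanning trees sketched above; the rest is just recognizing the reduced density as another member of the same family. One should also handle the edge case where some $a'_{e_j}$ might be close to $0$ — but $m_j \le a/2 \le a_{e_j}/2$ guarantees $a'_{e_j} \ge a_{e_j}/2 > 0$, so Proposition \ref{newm} applies verbatim and $\mu_{a',\Lambda}$ integrates to $1$.
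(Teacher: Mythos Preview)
Your proof is correct and follows essentially the same approach as the paper: absorb the factors $B_{e_j}^{m_j}$ into the density, expand $D[M(u,s)]$ via the matrix-tree theorem, use $a_{e_j}\le 2(a_{e_j}-m_j)$ termwise on each spanning tree to get $D[M(u,s)]\le 2^n D[M'(u,s)]$, and recognize the resulting integrand as the probability density $\mu_{a',\Lambda}$. The paper's proof is just a more compressed version of what you wrote, without the exploratory detours; your worry about repeated edges is legitimate but the paper also implicitly assumes the $e_j$ are distinct (and in all applications they are, coming from simple paths).
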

\begin{proof}
By expression  \eqref{usmes}, we have
\begin{equation*}
\lan\prod_{j=1}^{n} B_{e_{j}}^{m_{j}} \ran
=\tfrac{1}{(2\pi)^{(N-1)}}\int \left[\prod_{e} \frac{1}{B_e^{\bar{a}_e}}\ \right]
 D[M_{a}(u,s)] e^{-\sum_{j\in\La}u_{j}}\prod_{k\ne 0} du_k ds_k,
\end{equation*}
where we set $\bar{a}_{{e_{j}}}= a_{e_{j}}-m_{j}$ for $j=1,..n$ and $\bar{a}_{e}=a_{e}$
for all other edges. Note that $\bar{a}_{{e_{j}}}\geq a_{e_{j}}/2$ since $m_{j}\leq a/2$ for all $j$.
Expanding the minor as  a sum over spanning trees we deduce (recall $c_{ij}=a_{ij}e^{u_i+u_j}/B_{ij}$)
\begin{align*}
&\lan\prod_{j=1}^{n} B_{e_{j}}^{m_{j}} \ran
=\tfrac{1}{(2\pi)^{(N-1)}}  \sum_{T} \int \left[\prod_{e} \frac{1}{B_e^{\bar{a}_e}}\ \right]
 \left[ \prod_{e\in T} c_{e}\right] e^{-\sum_{j\in\La}u_{j}}\prod_{k\ne 0} du_k ds_k,\cr
& \leq 2^{n} \ \tfrac{1}{(2\pi)^{(N-1)}}  \int \left[\prod_{e} \frac{1}{B_e^{\bar{a}_e}}\ \right]
 D[M_{\bar{a}}(u,s)] e^{-\sum_{j\in\La}u_{j}}\prod_{k\ne 0} du_k ds_k
\ = 2^{n} \int d\mu_{\bar{a},\Lambda } (u,s)= 2^{n}
\end{align*}
where we have used the bound $a_{e_{j}}\leq 2 (a_{e_{j}}-m_{j})$ so we can replace $a_{e}$ with  $\bar{a}$ 
in the determinant. 
\end{proof}

 {\bf Proof of Lemma \ref{ward}.} For more readability we provide an elementary derivation of the Ward 
identity which does not involve
 fermionic  integral, even though it could be deduced from the more general proof of Lemma  \ref{wardp}. 
 Consider the graph $(\Lambda , \tilde E)$ where we add an extra edge $\tilde e=\{x,y\}$ to $E$ 
(possibly creating a double edge).
 We put a weight $a_{\tilde e}=-m$ on this edge. 
Denote by $\tilde \mu_{a,\Lambda }(u,s)$  the corresponding density, and by $\tilde{M}_{a}$ the corresponding matrix 
in (\ref{usold}).   If $m<0$, it corresponds to the density associated with the new graph $(\Lambda , \tilde E)$ 
by theorem \ref{meas}
and so  $\int d\tilde \mu_{a,\Lambda }=1$. 

The strategy is now to extend the equality to $0\le m\le a/4$
 by analyticity. For this we need to upper bound the density $\tilde \mu_{a,\Lambda }$.
Using the expression \eqref{usmes}, we deduce, for $m\ge0$, 
\begin{align*}
| \tilde \mu_{a,\Lambda }(u,s)|&\leq   \tfrac{1}{(2\pi)^{(N-1)} } e^{-\sum_{j\in\La}u_{j}}
  \left[\prod_{e\in E} \frac{1}{B_e^{a_e}}\ \right] B_{xy}^{m}\  
D[\tilde{M}_{|a|}]\\
&=  B_{xy}^{2m} \tfrac{1}{(2\pi)^{(N-1)}} e^{-\sum_{j\in\La}u_{j}}
\left[\prod_{e\in \tilde{E}} \frac{1}{B_e^{|a_e|}}\ \right] D[\tilde{M}_{|a|}]
=   B_{xy}^{2m}  \tilde \mu_{|a|,\Lambda }(u,s).
\end{align*}
Now let $\gamma $ be a simple 
path in $E$ connecting $x$ to $y$.
Then, by Lemma \ref{Bxyz}, 
\[
 B_{xy}^{m}\leq 2^{m (|\gamma |-1)} \prod_{e\in \gamma } B_{e}^{m},
\]
and, by  the same argument as in the proof of Lemma \ref{moments} above, if $0\le m\le a/4$, then
\[
\int |d\tilde \mu_{a,\Lambda }(u,s)| \leq  4^{m (|\gamma |-1)} \int 
\left[ \prod_{e\in \gamma } B_{e}^{2m}\right]  
d\tilde \mu_{|a|,\Lambda }(u,s)
\leq   4^{m (|\gamma |-1)}  2^{|\gamma |} \int  d\tilde \mu_{\bar{a},\Lambda }(u,s)
\]
where  $\bar{a}_{e}=|a_{e}|-2m=a_{e}-2m{\geq a/2}$ for all $e\in \gamma $ 
(as $e\ne\{x,y\}$ { and $m\leq a/4$}) and $\bar{a}_{e}=|a_{e}|$ otherwise. 
By Proposition \ref{newm}, $d\tilde \mu_{\bar{a},\La}(u,s)$ is a probability measure on $\tilde{E}$. 
The bound above holds for any 
$m\leq a/4$, hence $\tilde \mu_{a,\La}(u,s)$ is integrable. Moreover it is an analytic function
in the parameter $m$, and  therefore
\begin{equation}\label{integ}
\int \tilde \mu_{a,\Lambda }(u,s)  \prod_{k\ne 0} du_k ds_k= 1, \;\;\; {\forall \; m\le a/4}.
 \end{equation}
Now, {as in Lemma \ref{electric},} 
let  $N=N(u,s)$  (respectively $\tilde N=\tilde N(u,s)$) be the restriction of $M_{a}(u,s)$
 (resp. $\tilde M_{a}(u,s)$) to the subset 
 $\Lambda \setminus\{0\}$.
 Expanding the determinant with respect to the extra term coming from the new edge 
$\tilde e$ { and using Cramer's rule} we deduce 
\begin{align*}
 &  D[\tilde M_{a}(u,s)]= \det(\tilde N(u,s))\\
&=\det(N) 
  + (-m)\frac{e^{u_x+u_y}}{ B_{x,y}} \left[ \indic_{x\neq 0} \det(N)_{x,x} -
 2 \indic_{x\neq i_0, y\neq 0} \det(N)_{x,y} +\indic_{y\neq 0} \det(N)_{y,y} \right]
 \\
 &=
 \det(N) \left[ 1-m \frac{e^{u_x+u_y}}{B_{x,y}}<(\delta_x-\delta_y), G (\delta_x-\delta_y)> \right]\\
&  = D[M_{a}(u,s)] \left[ 1-m <(\delta_x-\delta_y), G^{xy} (\delta_x-\delta_y)> \right] ,
\end{align*}
 where  $\det(N)_{x,y}$ is the cofactor $(x,y)$ of $N$ 
that coincides (up to a sign) with 
 the minor obtained by removing the line $x$ and column $y$. In the  
  last lines $G$ and $G^{xy}$ are the matrices defined in Lemma \ref{electric},  
with conductances $c_{i,j}$ and $c_{ij}^{xy}$ defined in Proposition 
\ref{newm} and Definition \ref{def:er}.  Note that for all $x\neq y$  
\[
<(\delta_x-\delta_y), G (\delta_x-\delta_y)>=\indic_{x\neq 0, y\neq 0} 
<(\delta_x-\delta_y), N^{-1} (\delta_x-\delta_y)> +
\indic_{y\neq 0,x=0} N^{-1}_{yy} +\indic_{x\neq 0,y=0} N^{-1}_{xx}. 
\]
Finally, using Lemma \ref{electric} and  Definition \ref{def:er}, we conclude that
 $$
  \det(\tilde N(u,s))= \det(N(u,s))(1-mD_{x,y}).
  $$
  Therefore
 \begin{eqnarray*}
 1= \int \tilde \mu_{a,\Lambda }(u,s)  \prod_{k\ne 0} du_k ds_k= \int B^{m}_{x,y}(1-m D_{x,y}) \mu_{a,\Lambda }(u,s)  \prod_{k\ne 0} du_k ds_k.
 \end{eqnarray*}
\qed

 
{\bf Proof of Lemma \ref{wardp}.} 
In \cite{dsz}, the protected Ward estimates are a consequence of the Berezin identity stated in 
appendix C, proposition 2 of \cite{dsz}. 
The starting point is to write the  determinant term as a fermionic integral 
(cf e.g. \cite{Disertori-HDR,dsz}) with new pairs
of anticommuting variables $(\opsi_i, \psi_i)$:
$$D[M_{a}(u,s)]=(2\pi)^{N-1}\int \exp\left(-\sum_{e\in E}  \frac{a_{e}}{B_{e}} (S_{e}-B_{e}) \right)
\prod_{k\ne 0}d\opsi_k d\psi_k.$$  
This leads to the following density
\begin{equation}\label{usold2}
\mu_{a,\Lambda } (u,s, \opsi,\psi) = \left[\prod_{e\in E} \frac{1}{B_e^{a_e}}\ \right]
 e^{-\sum_{e\in E} \frac{a_{e}}{B_{e}} (S_{e}-B_{e})  }   e^{-\sum_{j\in\La}u_{j}},
\end{equation}
where 
$$
S_{i,j}= B_{ij}+ e^{u_i+u_j} (\opsi_i-\opsi_j)(\psi_i-\psi_j)
$$
is the same supersymmetric expression introduced in  \cite{dsz}, 
 $u_{0}=s_{0}=0$ and $\opsi_{0}=\psi_{0}=0$. Then 
$$
d\mu_{a,\Lambda } (u,s)=\int d\mu_{a,\Lambda } (u,s, \opsi,\psi) \prod_{k\ne 0} d\opsi_k d\psi_k.
$$
From the mathematical point of view, the fermionic integral should be understood as 
a product of derivatives   with 
respect to the variables 
$(\opsi_k, \psi_k)$, cf e.g. \cite{Disertori-HDR}. 
Since the fermionic variables are
antisymmetric, we have $(S_{e}-B_{e})^{2}=0$, and
\begin{equation}
\label{bs}
 e^{-\frac{a_{e}}{B_{e}} (S_{e}-B_{e}) }= 1- \frac{a_{e}}{B_{e}} (S_{e}-B_{e}) = 
\frac{1}{ 1+ \frac{a_{e}}{B_{e}} (S_{e}-B_{e}) }=\left(\frac{B_e}{S_e}\right)^{a_e}
\end{equation}
Then
\begin{equation}\label{usold2bis}
\mu_{a,\Lambda } (u,s, \opsi,\psi) = \left[\prod_{e\in E} \frac{1}{S_e^{a_e}}\ \right]
  e^{-\sum_{j\in\La}u_{j}}
\end{equation}
If $f(s_{ij})$ is a smooth function of the variable 
$(s_{i,j})$, we understand $f(S_{i,j})$ as the function
obtained by Taylor expansion in the fermionic variables 
$(\opsi_i,\psi_i,\opsi_j,\psi_j)$; 
this expansion is finite since the fermionic variables are
antisymmetric. Hence, formally $f(S_{i,j})$ is function with values in the exterior 
algebra 
constructed from $(\opsi_i, \psi_i,\opsi_j, \psi_j)$. 
The same can be generalized to $f[(S_{ij})_{i,j\in \Lambda \times \Lambda }]$.
The Berezin identity (see for instance proposition 2 in \cite{dsz}) implies that for any smooth function 
$f[(s_{ij})_{ij}]$, then
 \begin{equation}\label{Berezin}
 \int f[(S_{ij})_{i j}]d\mu_{a,\Lambda } (u,s, \opsi,\psi) = f(0),
\end{equation}
if $f[(s_{ij})_{i j}]$ is integrable with respect to $d\mu_{a,\Lambda } (u,s, \opsi,\psi)$
(which means that after integration with respect to the fermionic variables it is integrable 
in the usual sense). 

Remark that if $f$ is a polynomial (of degree bounded by $a$), then \eqref{Berezin} is a direct 
consequence of the fact that 
the measure $d\mu_{a,\Lambda } (u)$ has integral one, which can be proved either 
by supersymmetric or 
probabilistic arguments (as we did in \eqref{integ} above). 
Indeed let $(x_{i},y_{i})$, $i=1,\dotsc ,n$ be $n$ pairs of points.
As in the proof of Lemma \ref{ward} above let $\tilde{E}= E\cup (\cup_{i=1}^{n} \{{x_{i},y_{i}}\})$
the graph obtained by adding the edges  $\{x_{i},y_{i}\}$.  We assign to each new edge  $\{x_{i},y_{i}\}$
the conductance $a_{x_{i},y_{i}}=-m_{i}$. Let $\tilde{M}_{a}$ be the corresponding matrix.
Then, using \eqref{bs},
\[
\lan \prod_{i=1}^n S_{x_iy_i}^{m_i} \ran = 
\tfrac{1}{(2\pi)^{(N-1)}}\int \left[\prod_{e\in \tilde{E}} \frac{1}{B_e^{a_e}}\ \right]
 D[\tilde{M}_{a}(u,s)] e^{-\sum_{j\in\La}u_{j}}\prod_{k\ne 0} du_k ds_k,
\]
Similarly as in the proof of  Lemma \ref{ward}, we choose $n$ simple paths $\gamma_{i}$, $i=1,..n$ in $E$ 
connecting $x_{i}$ to $y_{i}$, and deduce that there exists $c>0$ such that  this density 
$\mu_{a,\Lambda } (u,s)$ is integrable if $m_i\le c$ for all $i=1,\dotsc ,n$. 
In general the constant $c$ depends on $n$, since an edge may belong to several
paths (the worst case being when it belongs to {\em all} paths) hence the corresponding
term may appear with a power as large as $n$.  
Below we will apply these relations to the case when the  regions 
$R_{x_iy_i}$ have disjoint interiors. In this last case we can always choose non overlapping
paths $\gamma_{i}$ and the constant $c$ will be independent of $n$. 

The density $\mu_{a,\Lambda } (u,s)$ is 
 analytic in the parameters $m_i$, and  \eqref{usold2bis} implies
\[
\lan \prod_{i=1}^n S_{x_iy_i}^{m_i} \ran = 1
\]
for negative $m_i$,  therefore  red this remains true when  
$0\leq m_i\le c$ for all $i$.  

Now by the same arguments as in \cite[lemma 7]{dsz} we have,  if the regions 
$R_{x_iy_i}$ have disjoint interiors,
\[
 D[\tilde{M}_{a}(u,s)] \geq  D[M_{a}(u,s)] \prod_{j=1}^{n} 
(1-m_{j} D^{N}_{x_{j}y_{j}}) .
\]
This completes the proof of \eqref{protect1}.

Finally to prove  the  ``protected'' Ward estimate \eqref{protect1} we proceed 
exactly as in  \cite[lemma 6]{dsz}.
The main idea is to approximate the characteristic function {
$\chi_{xy}= \indic_{\{B_{xy}\leq b |x-y|^{\alpha } \}}$ by a sequence of
smooth decreasing functions $\chi_{\delta}:\mathbb{R}\to\mathbb{R}$. } Now, by symmetry,
\[
\lan S_{xy}^{m} \chi_{\delta } (S_{xy})  \ran = 1 \qquad \forall x,y.
\]
Integration over the fermionic variables yields
\begin{align*}
&\lan S_{xy}^{m}\,\chi_{\delta } (S_{xy})  \ran= \lan B_{xy}^{m}\,
\chi_{\delta } (B_{xy})
 e^{\left[m+ \frac{\chi'_{\delta } (B_{xy})}{\chi_{\delta } (B_{xy})}\right]
\frac{(S_{xy}-B_{xy}) }{B_{xy}}}
  \ran \cr
&= \tfrac{1}{(2\pi)^{(N-1)}}\int \left[\prod_{e\in E} \frac{1}{B_e^{a_e}}\ \right] 
B_{xy}^{m}\,\chi_{\delta } (B_{xy})
 D[\tilde{M}_{a}(u,s)] e^{-\sum_{j\in\La}u_{j}}\prod_{k\ne 0} du_k ds_k,
\end{align*}
where $\tilde{M}_{a}$ is defined on the graph $\tilde{E}= E\cup (x,y)$ 
and the edge $(x,y)$
has conductance 
\[
 a_{xy}= \left[-m+ \frac{-\chi'_{\delta } (B_{xy})}{\chi_{\delta } (B_{xy})}  \right]\geq -m  ,
\]
since $\chi'<0$. Hence 
\[
 D[\tilde{M}_{a}(u,s)]>D[\tilde{M}_{\bar{a}}(u,s)]= D[M_{a}(u,s)] (1-mD_{xy})
\]
where $\bar{a}_{xy}=-m$ and $\bar{a}_{e}=a_{e}$ for all other $e$.
Finally
\[
\lan B^{m}_{xy}\chi_{\delta } (B_{xy}) (1-mD_{xy})  \ran \leq 
 \lan S_{xy}^{m}\,\chi_{\delta } (S_{xy})  \ran=  1.
\]
The proof of the general statement follows from a combination of this argument
with the ideas used for  the estimate \eqref{protect1}. \qed

\section{Proof of Proposition \ref{resistance-bound}}
\label{sec:res}
Denote by 
$$E_{x,y}=\{\{i,j\}, \;\; i\in R_{x,y}, \; j\in R_{x,y}, \; i\sim j\}
$$ 
the set of {non-directed} edges in $R_{x,y}$. We denote by $\tilde E_{x,y}$ the associated set of {\bf directed} edges.
Recall that $aD^N_{x,y}$ is the effective conductance
between $x$ and $y$ of the network with edges $E_{x,y}$ and conductances $(\gamma_{i,j})_{\{i,j\}\in E_{x,y}}$
 defined in \eqref{gammadef} of Lemma \ref{estcond}.
Let $\fff_{x,y}$ be the set of unit flows from $x$ to $y$ with support in $\tilde E_{x,y}$: 
precisely, $\theta\in \fff_{x,y}$
if $\theta$ is a function $\theta : \tilde E_{x,y}\rightarrow \R$ which is antisymetric 
(i.e. $\theta(i,j)=-\theta(j,i))$ and such that
$$
\dive(\theta) = \delta_x-\delta_y,
$$
where $\dive : R_{x,y} \rightarrow \R$ is the function
$$
\dive(\theta)(i)= \sum_{j\in R_{x,y}, \; j\sim i} \theta(i,j).
$$
Recall that (see for instance \cite{lyons-peres}, Chapter 2)
\begin{eqnarray}\label{energy}
a D_{x,y}^N = \inf_{\theta \in \fff_{x,y}} \sum_{\{i,j\}\in E_{x,y}} \frac{1}{ \gamma_{i,j}} (\theta(i,j))^2.
\end{eqnarray}
The strategy is now to construct explicitly a flow $\theta$  so that under the condition $\overline\chi_{x,y}$, 
the energy (\ref{energy}) is
bounded by a constant depending only on $d$, $\alpha$ and $b$. 
This flow will be constructed as an integral over flows associated
to sufficiently spreaded paths.  

Remind that a deformed diamond is a set of the following form
$$
\Z^d\cap(\tilde C_x^l \cap \tilde C_y^{x-y}),
$$
(plus a few points close to $x$ and to $y$ so that the set is connected in $\Z^d$) where
$x\in \Z^d$, $l\in \R^d$,  $l\neq 0$  and $y\in \Z^d$ is a point such that
$
y\in \tilde C^l_x.
$

For $z\in \R^d$ we set
$$
r(z)= \frac{(z-x)\cdot (y-x)}{ |y-x|^2},
$$
and $p(z)= x+r(z)(y-x)$ the projection of $z$ on the line $(x,y)$.

For $h\in (0,1)$ we denote 
$$
\hat R^x_{x,y}=\{i\in R_{x,y},\; r(i)\le h\},
\;\;\; \hat R^y_{x,y}=\{i\in R_{x,y},\; r(i)\ge h\}.
$$
From the assumption on $f_x, f_y$, there exists $h\in [1/10, 9/10]$ such that
$ \hat R^x_{x,y}\subset R^x_{x,y}$ and $\hat R^y_{x,y}\subset R^y_{x,y}$
 \footnote{Indeed, if $i\in R_{x,y}$, the angle $\angle (z,i),(x,y)\le \pi/8$ for $z=x,y$.
Hence, if $i\in  \hat R^x_{x,y}$, then $\vert i-x\vert { \leq \frac{r (i)}{\cos(\pi/8)}\vert x-y\vert} 
\leq  \frac{h}{\cos(\pi/8)}\vert x-y\vert$. 
Hence $\hat R^x_{x,y}\subset R_{x,y}^x$ as
soon as $h\le f_x \cos(\pi/8)$. But $\cos(\pi/8)\ge 0.92$ and $f_x\ge 1/5$ so that $f_x cos(\pi/8)\ge 0.18$.
Similarly  $\hat R^y_{x,y}\subset R_{x,y}^y$ as
soon as $1-h\le f_y \cos(\pi/8)$, since $f_y \cos(\pi/8)\ge 0.18$. Using $f_x+f_y\ge1+1/5$ it implies that
$(f_x+f_y) \cos(\pi/8) >1$ and we can find $h\in [1/10, 9/10]$ such that $\hat R^x_{x,y}\subset R_{x,y}^x$ 	and
$\hat R^y_{x,y}\subset R_{x,y}^y$.}.
We fix now such a $h\in [1/10, 9/10]$.
We set
$$
\Delta_h=\{ z\in \R^d, \; r(z)=h\} \cap \left( \tilde C_x^l \cap \tilde C_y^{x-y}\right).
$$
If is clear from the construction that there exists a constant $\Cst(d)$ such that 
\begin{equation}\label{deltah}
\vert \Delta_h\vert \geq \Cst(d) \vert x-y\vert^{d-1}, \qquad  \forall h\in   [1/10, 9/10], 
\end{equation}
where $\vert \Delta_h\vert$ is the surface of  $\Delta_h$.

To any path $\s=(x_0=x,\ldots,x_n=y)$ from $x$ to $y$ we can associate the unit
flow from $x$ to $y$ defined by
$$
\theta_\sigma=\sum_{i=1}^n \indic_{(x_{i-1},x_i)}-\indic_{(x_i,x_{i-1})}.
$$

For $u\in \Delta_h$, let $L_u$ be the union of segments
$$
L_u=[x,u]\cup[u,y].
$$
Clearly $L_u\subset \tilde C_x^l \cap \tilde C_y^{x-y}$ by convexity.
There is a constant $\Cst(d)$, such that for any $u\in \Delta_h$, we can find a simple path $\sigma_u$ in 
$R_{x,y}$ from $x$ to $y$ such that
for all $k=0, \ldots, \vert\sigma_u\vert$
\begin{eqnarray}\label{distance}
\hbox{dist}(L_u,\sigma_u(k))\le \Cst(d),
\end{eqnarray}
and we define $\theta$ as
$$
\theta = \frac{1}{ \vert \Delta_h\vert} \int_{\Delta_h} \theta_{\sigma_u}\,du,
$$
which is a unit flow from $x$ to $y$.

The path $\sigma_u$ can visit a vertex $i$ only if $\hbox{dist}(i, L_u)\le c_0=\Cst(d)$. This implies that, for all $i$, 
$$
\sum_{j:\,j\sim i} \vert \theta(i,j)\vert \leq
\frac{  2d }{\vert \Delta_h\vert} \int_{\Delta_h} \indic_{\{\hbox{dist}(i,L_u)\le c_0\}} du.$$
Now if $r(i)\in { (}0,h]$, {let $u_{0}=x+ \frac{h}{r (i)} (i-x)$ be the 
intersection  with $\Delta_{h}$ of the line passing through $i$ and $x$.} Then  
$$
\int_{\Delta_h} \indic_{\{\hbox{dist}(i,L_u)\le c_0\}} du 
\leq  
  { 
\int_{\Delta_h} \indic_{\{|u-u_{0}| \leq c_0\frac{h}{r(i)}\}} du \leq 
}
{\Cst(d)} \left(\frac{h}{r(i)}\right)^{d-1}
$$
Hence, if 
$r(i)\in  { (}0,h]$
\beq
\sum_{j:\,j\sim i} \vert \theta(i,j)\vert\le
\Cst(d) \frac{1}{\vert x-y\vert^{d-1}} \left( \frac{\vert x-y\vert}{ \vert i-x\vert}\right)^{d-1}= 
\frac{\Cst(d)}{\vert i-x\vert^{d-1}},
\eeq
where we used \eqref{deltah} and 
$\frac{h}{r (i)}=\frac{\vert u_{0}-x\vert}{\vert i-x\vert}\leq \frac{\vert x-y\vert}{\vert i-x\vert}.$
Similarly we have
if 
$r(i)\in [h,1 {)}$
\beq
\sum_{j:\,j\sim i} \vert \theta(i,j)\vert\le
 \frac{\Cst(d)}{\vert i-y\vert^{d-1}}.
\eeq

Now, under the condition $\overline\chi_{x,y}$, we know that if $i\in \hat R^x_{x,y}\subset R^x_{x,y}$, then 
  by Lemma \ref{estcond} 
$\gamma_{i,j}\ge c\vert i-x\vert^{-\beta}$ with $\beta=4\alpha<1$ and $c=b^{-4}/64$.
Similarly,  if $i\in \hat R^y_{x,y}\subset R^y_{x,y}$ then 
$\gamma_{i,j}\ge c\vert i-y\vert^{-\beta}.$ 
This implies
\beq
aD_{x,y}^N
&\le& 
\sum_{i\in \hat R^x_{x,y}, \; i\neq x} c^{-1}\vert i-x\vert^\beta  \left(\sum_{j:\,j\sim i} \vert \theta(i,j)\vert\right)^2+
\sum_{i\in \hat R^y_{x,y}, \; i\neq y} c^{-1}\vert i-y\vert^\beta  \left(\sum_{j:\,j\sim i} \vert \theta(i,j)\vert\right)^2
\\
&\le &
\Cst(d) \left(
\sum_{i\in \hat R^x_{x,y}, \; i\neq x} c^{-1}\vert i-x\vert^{\beta-2(d-1)} +
\sum_{i\in \hat R^y_{x,y}, \; i\neq y} c^{-1}\vert i-y\vert^{\beta-2(d-1)}\right)
\eeq
For $k\in \N$, 
$$
\left\vert\{i\in R_{x,y}, \;k\le \vert i-x\vert<k+1\}\right\vert \le \Cst(d) k^{d-1},
$$
and similarly for $x$ replaced by $y$.
Therefore
$$
aD_{x,y}^N\le \Cst(d) \sum_{k=1}^\infty  c^{-1} k^{\beta-(d-1)}\le \Cst(d,b).
$$
since $\beta=4\alpha\le1/2$ and $d\ge 3$.

\footnotesize
\bibliographystyle{plain}
\bibliography{transience}

\end{document}